\setlist[enumerate,1]{label={\upshape(\roman*)}, leftmargin=2\parindent}
\setlist[itemize,1]{label=$\boldsymbol{\cdot}$, leftmargin=\parindent}
\newcommand{\etalchar}[1]{$^{#1}$}
\newcommand{\ie}{{\it i.e.\@ }}
\newcommand{\eg}{{\it e.g.\@ }}
\newcommand{\PaperTitle}{A reduction principle for Fourier coefficients of automorphic forms}
\let\oldtocsubsection=\tocsubsection
\let\oldtocsubsubsection=\tocsubsubsection
\renewcommand{\tocsubsection}[2]{\hspace{1em}\oldtocsubsection{#1}{#2}}
\renewcommand{\tocsubsubsection}[2]{\hspace{2em}\oldtocsubsubsection{#1}{#2}}
\newcommand*\IsoTo{%
  \xrightarrow[]{\raisebox{-0.5 em}{\smash{\ensuremath{\sim}}}}%
}
\newtheorem{maintheorem}{Theorem}
\theoremstyle{definition}
\newtheorem{mainalgorithm}[maintheorem]{Algorithm}
\theoremstyle{plain}
\newtheorem*{theorem*}{Theorem}
\newtheorem{lemma}{Lemma}[subsection]
\newtheorem*{conjecture*}{Conjecture}
\newtheorem{thm}[lemma]{Theorem}
\newtheorem{prop}[lemma]{Proposition}
\newtheorem{lem}[lemma]{Lemma}
\newtheorem{cor}[lemma]{Corollary}
\theoremstyle{definition}
\newtheorem{algorithm}[lemma]{Algorithm}
\newtheorem{definition}[lemma]{Definition}
\newtheorem{defn}[lemma]{Definition}
\newtheorem{notn}[lemma]{Notation}
\newtheorem{example}[lemma]{Example}
\newtheorem{notation}[lemma]{Notation}
\newtheorem*{example*}{Example}
\theoremstyle{remark}
\newtheorem*{remark*}{Remark}
\newtheorem{remark}[lemma]{Remark}
\sloppy \theoremstyle{plain}
\newcommand{\iso}{\cong}
\newcommand{\uu}{\mathcal{U}}
\newcommand{\WO}{\operatorname{WO}}
\newcommand{\bs}{\backslash}
\newcommand{\Hom}{\operatorname{Hom}}
\newcommand{\diag}{\operatorname{diag}}
\newcommand{\A}{\mathbb{A}}
\newcommand{\eps}{\varepsilon}
\newcommand{\Ker}{\operatorname{Ker}}
\newcommand{\Q}{{\mathbb Q}}
\newcommand{\R}{{\mathbb R}}
\newcommand{\C}{{\mathbb C}}
\newcommand{\K}{{\mathbb K}}
\newcommand{\Span}{{\operatorname{Span}}}
\newcommand{\Exp}{\operatorname{Exp}}
\newcommand{\Id}{\operatorname{Id}}
\newcommand{\G}{{\bf G}}
\newcommand{\alp}{{\alpha}}
\newcommand{\lam}{{\lambda}}
\newcommand{\bfG}{{\mathbf{G}}}
\newcommand{\cF}{{\mathcal{F}}}
\newcommand{\cN}{{\mathcal{N}}}
\newcommand{\cM}{{\mathcal{M}}}
\newcommand{\abs}[1]{\left|{#1}\right|}
\newcommand{\g}{{\mathfrak{g}}}
\newcommand{\fa}{{\mathfrak{a}}}
\newcommand{\fc}{{\mathfrak{c}}}
\newcommand{\fg}{{\mathfrak{g}}}
\newcommand{\fh}{{\mathfrak{h}}}
\newcommand{\fn}{{\mathfrak{n}}}
\newcommand{\fu}{{\mathfrak{u}}}
\newcommand{\fv}{{\mathfrak{v}}}
\newcommand{\fw}{{\mathfrak{w}}}
\newcommand{\cO}{{\mathcal{O}}}
\newcommand{\GL}{\operatorname{GL}}
\newcommand{\SL}{\operatorname{SL}}
\newcommand{\SO}{\operatorname{SO}}
\newcommand{\Sp}{\operatorname{Sp}}
\newcommand{\gl}{{\mathfrak{gl}}}
\newcommand{\sll}{{\mathfrak{sl}}}
\newcommand{\Sym}{\operatorname{Sym}}
\newcommand{\Lie}{\operatorname{Lie}}
\newcommand{\ad}{\operatorname{ad}}
\newcommand{\Ad}{\operatorname{Ad}}
\newcommand{\fri}{\mathfrak{i}}
\newcommand{\fp}{\mathfrak{p}}
\newcommand{\fl}{\mathfrak{l}}
\newcommand{\fm}{\mathfrak{m}}
\newcommand{\fr}{\mathfrak{r}}
\newcommand{\fz}{\mathfrak{z}}
\newcommand{\cW}{\mathcal{W}}
\renewcommand{\fri}{{\mathfrak{i}}}
\newcommand{\onto}{{\twoheadrightarrow}}
\newcommand{\into}{{\hookrightarrow}}
\newcommand{\WS}{\operatorname{WS}}
\renewcommand{\sl}{\mathfrak{sl}}
\newcommand{\cge}{\succcurlyeq}
\newcommand{\lie}[1]{\mathfrak{#1}}
\NewDocumentCommand{\intl}{g}{
    \IfNoValueTF{#1}{\int\limits}{\int\limits_{\mathclap{#1}}}
}
\NewDocumentCommand{\suml}{g}{
    \IfNoValueTF{#1}{\sum\limits}{\sum\limits_{\mathclap{#1}}}
}
\newcommand{\Oh}{\mathcal{O}}
\numberwithin{equation}{section}
\begin{document}

\author[D. Gourevitch]{Dmitry Gourevitch}
\address{Dmitry Gourevitch,
Faculty of Mathematics and Computer Science,
Weizmann Institute of Science,
POB 26, Rehovot 76100, Israel }
\email{dmitry.gourevitch@weizmann.ac.il}
\urladdr{\url{http://www.wisdom.weizmann.ac.il/~dimagur}}

\author[H. Gustafsson]{Henrik P. A. Gustafsson}
\address{\hspace{-\parindent}Henrik Gustafsson \newline 
  \hspace{-\parindent}\textnormal{Until September 15, 2019:} \newline
  \indent Department of Mathematics, Stanford University, Stanford, CA 94305-2125. \newline 
  \textnormal{Since September 16, 2019:} \newline
  \indent School of Mathematics, Institute for Advanced Study, Princeton, NJ~08540. \newline 
  \indent Department of Mathematics, Rutgers University, Piscataway, NJ~08854. \newline 
  \indent Department of Mathematical Sciences, University of Gothenburg and Chalmers University of Technology, SE-412~96 Gothenburg, Sweden.}
\email{henrik.pa.gustafsson@gu.se}
\urladdr{\url{http://hgustafsson.se}}

\author[A. Kleinschmidt]{Axel Kleinschmidt}
\address{Axel Kleinschmidt,  {\it Max-Planck-Institut f\"{u}r Gravitationsphysik (Albert-Einstein-Institut)},
Am M\"{u}hlenberg 1, DE-14476 Potsdam, Germany, and 
{\it International Solvay Institutes}, 
ULB-Campus Plaine CP231, BE-1050, Brussels, Belgium}
\email{axel.kleinschmidt@aei.mpg.de}

\author[D. Persson]{Daniel Persson} 
\address{Daniel Persson, Chalmers University of Technology, Department of Mathematical Sciences\\
SE-412\,96 Gothenburg, Sweden}
\email{daniel.persson@chalmers.se}

\author[S. Sahi]{Siddhartha Sahi}
\address{Siddhartha Sahi, Department of Mathematics, Rutgers University, Hill Center -
Busch Campus, 110 Frelinghuysen Road Piscataway, NJ 08854-8019, USA}
\email{sahi@math.rugers.edu}

\subjclass[2010]{11F30, 11F70, 22E55, 20G45}

\keywords{automorphic function, automorphic representation, Fourier expansion on covers of reductive groups, Whittaker support, nilpotent orbit, wave-front set}

\title{\expandafter\MakeUppercase\expandafter{\PaperTitle}}
\maketitle
\begin{abstract}
We consider a general class of Fourier coefficients for an automorphic form on a finite cover of a reductive adelic group ${\bf G}(\A_\K)$, associated to the data of a `Whittaker pair'. We describe a quasi-order on Fourier coefficients, and an algorithm that gives an explicit formula for any coefficient in terms of integrals and sums involving higher coefficients. 

The maximal elements for the quasi-order are `Levi-distinguished' Fourier coefficients, which correspond to taking the constant term along the unipotent radical of a parabolic subgroup, and then further taking a Fourier coefficient with respect to a $\K$-distinguished nilpotent orbit in the Levi quotient. Thus one can express any Fourier coefficient, including the form itself, in terms of higher Levi-distinguished coefficients.

In follow-up papers we use this result to determine explicit Fourier expansions of minimal and next-to-minimal automorphic forms on split simply-laced reductive groups, and to obtain Euler product decompositions of their top Fourier coefficients.

\end{abstract}

\pagebreak 
\tableofcontents
\pagebreak 

\section{Introduction}
\subsection{Main results}
\label{intro}

In this paper we establish a reduction principle for unipotent periods of an automorphic form on a reductive algebraic group $\bf G$ defined over a number field $\K$. Such periods generalize Fourier coefficients for classical modular forms, and are important because they often encode quantities of arithmetic interest. Our motivation comes in part from string theory, where certain so-called non-perturbative effects can be expressed in terms of unipotent periods~\cite{Green:1997tv,Pioline:2010kb,Green:2011vz}. 
This 
is explored further in a companion paper \cite{Part2} that uses the results of the present paper in an essential way.

We work in the useful generality of a pair $(G,\Gamma),$ where $G$ is a
finite central extension of the adele group $\G(\A) = \G(\A_\K)$, and $\Gamma \subset G$ is a discrete subgroup  on which the covering map restricts to an isomorphism with ${\bf G}(\K)$. This class includes the important central extensions defined in \cite{BryDel}. Also in this case, by \cite[Appendix I]{MWCov}, the covering map has a canonical splitting over every unipotent subgroup $N = {\bf N}(\A)$, which we will use to identify $N$ as a subgroup of $G$.  If $\eta$ is a left $\Gamma$-invariant function on $G$, and $\chi$ is a character of $N$ that is trivial on $N\cap \Gamma$, then the $(N,\chi)$-unipotent period of $\eta$ is the integral
\begin{equation}
    \uu_{(N,\chi)}[\eta](g)=\int_{[N]}\eta(ng)\chi(n)^{-1}dn, \quad [N]:=({N\cap \Gamma})\backslash N.
\end{equation}
Since $[N]$ is compact, the period integral converges under mild conditions on $\eta$; although it is quite unlikely that one can say anything useful in such generality.

In this paper we consider a special class of unipotent periods $\cF_{S,\varphi}$ associated to certain pairs $(S,\varphi)\in \fg\times \fg^*$, where $\fg=\fg(\K)$ is the Lie algebra of $\bfG(\K)$. We refer to these as $(S,\varphi)$\emph{-Fourier coefficients}, or simply as Fourier coefficients. As we explain in next section, this class includes the unipotent periods studied in the theory of automorphic forms \cite{GRS2,GRS,Ginz,GH,JLS}, which we call \emph{neutral} Fourier coefficients, and also those arising in string theory~\cite{Pioline:2010kb,Green:2011vz,FGKP}, which we call \emph{parabolic} Fourier coefficients. 

Our main result is a reduction algorithm that allows us to express a given Fourier coefficient in terms of others, which are \emph{higher} with respect to a natural quasi-order $\cge$. This is the more important direction for applications. It is much easier to write a Fourier coefficient in terms of \emph{lower} coefficients; such expressions tend to be simpler but less useful -- see Proposition \ref{prop:domin} and its proof below, which uses the techniques of \cite{GGS}.

We now give a quick sketch of the essential ideas. We say $S\in \fg$ is $\Q$-semisimple if $\ad(S)$ acts semisimply on $\fg$ with eigenvalues in $\Q$, and we write $\fg^S_{\lam}$ for the $\lambda$-eigenspace. We set  
$$
\fg^S_{>\lam}=\textstyle{\bigoplus\nolimits}_{\mu>\lam}\fg^S_{\mu},\quad \fg^S_{\geq\lam}=\textstyle{\bigoplus\nolimits}_{\mu\geq\lam}\fg^S_{\mu}, \quad \text{ \it {etc.}},
$$
with similar notation for $\fg^*$. We say $(S,\varphi)\in \fg\times \fg^*$ is a \emph{Whittaker pair} if $S$ is $\Q$-semisimple and $\varphi\in(\fg^*)^S_{-2}$. In this case $\varphi$ is given by the Killing form pairing with a unique nilpotent element $f_\varphi$ in $\fg^S_{-2}$.
We write $\fg_{\varphi} \subset \fg$ for the stabilizer of $\varphi$, and we set
\begin{equation}
        \fn_{S,\varphi} = \fg^S_{> 1}\oplus (\fg^S_{1}\cap \fg_{\varphi}).
\end{equation}
Then $ \fn_{S,\varphi}$ is a nilpotent Lie algebra and $\varphi\vert_{\fn_{S,\varphi}}$ is a Lie algebra character. By exponentiation we get a unipotent subgroup $N_{S,\varphi}\subset G$ and a character $\chi_{S,\varphi}$, and we define
\begin{equation}
  \cF_{S,\varphi}[\eta]=\uu_{(N_{S,\varphi},\chi_{S,\varphi})}[\eta].
\end{equation}

We now define a $\Gamma$-invariant quasi-order on Whittaker pairs.
If $\varpi=(H,\varphi)$ and $\varpi'=(H',\varphi')$ are Whittaker pairs, then we write $\varpi\cge\varpi'$, first in the following two basic cases:
\begin{enumerate}
\item $ \varphi'=\varphi$, $H$ and $H'$ commute, and we have 
\begin{align}
\fg_{\varphi}\cap \fg^H_{\geq 1} \subseteq \fg^{H-H'}_{\geq 0}
\end{align}
\item $H=H'$, and there is a parabolic subalgebra $\fp=\fl+\fn$ defined over $\K$ such that 
\begin{align}\label{=IntOrder}
H, f_{\varphi'} \in \fl,\quad f_\varphi-f_{\varphi'} \in \fn.
\end{align}
\end{enumerate}
More generally, we write $\varpi\cge\varpi'$ if there is a sequence
$\varpi_1,\ldots,\varpi_n$ with $\varpi_1=\varpi$, $\varpi_n=\varpi'$, 
and elements $\gamma_1,\ldots,\gamma_{n-1}$ in $\Gamma$ such that  $\varpi_i \cge \gamma_i\cdot\varpi_{i+1}$ in the sense of (i) or (ii).  

We will also consider separately the quasi-orders given by (i) and (ii). Thus we will write
 \begin{equation}
 H\cge_\varphi H' \text{ if } (H,\varphi) \cge(H',\varphi);\quad \text{ and }\varphi \cge_H \varphi' \text{ if } (H,\varphi) \cge(H,\varphi').
 \end{equation}
 
We say that a pair $(H,\varphi)$ is \emph{Levi-distinguished }if $H$ is maximal with respect to $\cge_\varphi$. In this case, $\cF_{H,\varphi}$ reduces to a $\mathbb{K}$-distinguished coefficient on a Levi quotient of a parabolic subgroup, by first taking the constant term along the unipotent radical -- see \S\ref{subsec:Levi-DistPrel} for details.
 
{\bf Algorithm \ref{alg:domin}}, our main result, described in \S\ref{sec:levi-distinguished}, relates the Fourier coefficients for two pairs $(S,\varphi)\cge (H,\varphi)$ and gives an explicit formula of the form
\begin{equation}\label{Alg:A}
\cF_{H,\varphi}[\eta]= \cM_H^S(\cF_{S,\varphi}[\eta]) + \text{higher terms}.
\end{equation}
Here the ``main'' term $\cM_H^S$ is a discrete sum of integral transforms that we describe shortly. The ``higher'' terms are similar expressions, described in \S\ref{sec:levi-distinguished}, involving $\cF_{H',\varphi'}$ such that ${(H',\varphi')} \cge {(H,\varphi)}$ but $\varphi'$ is \emph{not} in the $\Gamma$-orbit of $\varphi$, which implies that the closure of the $\G(\C)$-orbit of $\varphi'$ properly contains that of $\varphi$. By iteration, we deduce that
\begin{enumerate}
\item[(a)] any $\cF_{H,\varphi}$ can be expressed via Levi-distinguished $\cF_{S,\psi}$ satisfying $(S,\psi)\cge (H,\varphi)$.
\item[(b)] in particular, $\eta=\cF_{0,0}[\eta]$ can be expressed in terms of Levi-distinguished $\cF_{S,\psi}$.
\item[(c)] if $\eta$ is cuspidal (see \S\ref{subsec:LeviDist})
it can be expressed in terms of $\K$-distinguished $\mathcal{F}_{S,\psi}$.
\end{enumerate}

We now define $\cM_H^S$. For this we write $(\mathfrak{g}^H_{>\lambda})^S_\mu=\fg^H_{>\lam}\cap \fg^S_\mu$ etc., and we set 
  \begin{equation}
  \fu:= (\fg^{H}_{>1})^S_1, \quad
  \fv:= (\fg_{> 1}^{H})^S_{<1}, \quad
  \fw:=(\fg^{H}_{1})^S_{<1}.
  \end{equation}
We regard these as nilpotent subquotients of the Lie algebra $\fg$ via the identifications
\begin{equation}
  \fu\cong(\fg^{H}_{>1})^{S}_{\geq1}/(\fg^{H}_{>1})^{S}_{>1}, \;
  \fv\cong\fg^H_{> 1}/(\fg^H_{> 1})^{S}_{\geq 1}, \;\fw\cong\fg^H_{\geq 1}/\left(\fg^H_{> 1}+ (\fg^H_{\geq 1})^S_{\geq 1}\right),
  \end{equation}
 and we define corresponding subquotients of the group $G$ as follows:
 \begin{equation}
 \label{uvw}
U=\Exp(\fu(\A)),\quad V=\Exp(\fv(\A)),\quad \Omega=\Exp(\fw(\K)).
  \end{equation} 
We note that $\Omega$ is a discrete group since $\fw=\fw(\K)$ is a lattice. We now set
    \begin{equation}\label{=HfromS}
             \cM_H^S(\cF_{S,\varphi}[\eta]) = \sum_{w\in \Omega}\,
            \int_{V} \int_{ {[ U]}} \cF_{S,\varphi}[\eta](wvug) \, dudv \, .
        \end{equation}

There is an important special case where the higher terms in (\ref{Alg:A}) vanish. We write $\psi \succ \varphi$ if $\psi \cge_H \varphi$ for some $H$, but $\psi \not\in \Gamma\cdot\varphi$, and we say $\varphi$ is in the \emph{Whittaker support }${\rm WS}(\eta)$ of $\eta$ if $\cF_{S,\varphi}[\eta]\ne 0$ for some $S$, but if $\psi \succ \varphi$ then $ \cF_{S,\psi}[\eta]= 0$ for all $S$. 

\setcounter{maintheorem}{1}  
\begin{maintheorem}[see \S\ref{subsec:Pfs}]
\label{thm:IntTrans}
If $\varphi$ is in ${\rm WS}(\eta)$ and $(S,\varphi)\cge (H,\varphi)$ then 
            $\cF_{H,\varphi}[\eta] =\cM_H^S(\cF_{S,\varphi}[\eta]).$            
\end{maintheorem}

\subsection{Classes of Fourier coefficients and further results}\label{sec:1.2}
The most natural way to complete a nilpotent $\varphi \in \fg^*$ to a Whittaker pair is provided by the Jacobson--Morozov theorem. Namely, $\varphi$ is given by the Killing pairing with a unique nilpotent $f\in \fg,$ and $f$ in turn can be completed to an $\mathfrak{sl_2}$-triple $(e,h,f)$. Then $(h,\varphi)$ is a Whittaker pair. We will call such pairs, and the corresponding coefficients, \emph{neutral}. Such  coefficients were extensively studied in \cite{GRS2,GRS,Ginz,GH,JLS}, where the name ``Fourier coefficient" was reserved exclusively for this case. 

Many applications use the class of  \emph{parabolic} Fourier coefficients, which are period integrals with respect to a character of the unipotent radical $U$ of a parabolic subgroup $P=LU$. Despite their prevalence in the literature, these have in general not been evaluated in terms of known functions on $\mathbf{G}(\mathbb{A})$. 

An important subclass of parabolic Fourier coefficients are those with respect to maximal unipotent subgroups, which we call \emph{Whittaker coefficients}. When the automorphic form is spherical and the unipotent character is non-degenerate these factorize over primes and the non-archimedean factors can be explicitly computed using the  Casselman--Shalika formula~\cite{CasselmanShalika}. By Lemma \ref{lem:WhitPL} below these are precisely the Levi-distinguished Fourier coefficients corresponding to nilpotent orbits that are principal in Levi subalgebras defined over $\K$. We will call such orbits \emph{$PL$-orbits} (see \S \ref{subsec:defPL} below for more details).

It follows from \cite{GGS} that for any Whittaker pair $(H,\varphi)$ there exists $h\in \fg$ such that $(h,\varphi)$ is a neutral pair and $H\cge_\varphi h$ (see Corollary \ref{cor:domin-neutral} below). 
Moreover, by \cite[Theorem C]{GGS}, if $\cF_{h,\varphi}[\eta]\equiv 0$ for some neutral pair $(h,\varphi)$ then $\cF_{S,\varphi}[\eta]\equiv 0$ for any Whittaker pair $(S,\varphi)$ with the same nilpotent element $\varphi$. Thus, neutral coefficients can be used in the definition of the Whittaker support.
More generally, if $S\cge_\varphi H$ then $\cF_{S,\varphi}$ can be obtained from $\cF_{H,\varphi}$ by an integral transform (that can also involve discrete summation), see Proposition \ref{prop:domin} below.  
In Lemma \ref{lem:dim} below we show that 
Levi-distinguished Whittaker pairs have maximal dimension of $\fn_{S,\varphi}$ among all Whittaker pairs with the same $\varphi$.

As an illustrative example we may take the coefficient $B=\cF_{S,\varphi}$ to be a Whittaker
coefficient with a character supported only on the exponentiated root space of a single
simple root, and the coefficient $C=\cF_{H,\varphi}$ to be a Fourier coefficient with respect to the
unipotent radical of the standard maximal parabolic subgroup obtained from the same root,
together with a restriction of the same character. Then, $B$ can be obtained as a period integral of $C$ over the quotient of the unipotent
groups. Theorem~\ref{thm:IntTrans} and Algorithm~\ref{alg:domin} allow us to go in the opposite direction and obtain $C$ from $B$ and from Levi-distinguished coefficients corresponding to higher orbits.

By Lemma \ref{lem:WhitPL} below,  if $\varphi$ is a principal nilpotent in $L$  then $\cF_{H+Z,\varphi}$ is a Whittaker coefficient. 
By Corollary \ref{cor:LeviDistDomin} below, for any Whittaker pair $(H,\varphi)$, there exists an $S$ such that $(S,\varphi)$ is a Levi-distinguished Whittaker pair, and $S\succcurlyeq_\varphi H$. 
We refer to \S \ref{subsec:Levi-DistPrel} below for more details.
To summarize, for any orbit $\Oh$ we have that
\begin{equation}
  \text{neutral} \; \preccurlyeq \; \text{any} \; \preccurlyeq \; \text{Levi-distinguished} \; \supseteq \text{Whittaker}.
\end{equation}
 
In particular, for the zero orbit $\{0\}$, the neutral coefficient is the identity map $\cF_{0,0}[\eta]=\eta$, while the Whittaker coefficient is the constant term map. From this summary we conclude that if all orbits $\cO\in \WO(\eta)$ are PL-orbits
then Algorithm \ref{alg:domin} allows us to express $\eta$, and all its Fourier coefficients, through its Whittaker coefficients.

In the case $G=\GL_n(\A)$ this generalizes the classical result by Piatetski-
Shapiro and Shalika \cite{PiatetskiShapiro,Shalika} that expresses every cuspidal form through its Whittaker coefficients with respect to non-degenerate characters. We explain how Algorithm \ref{alg:domin} allows us to reproduce this result in \S \ref{subsec:GL} below.

\subsection{Related works}

Our main tool is a generalization of the deformation technique of \cite{GGS,GGS:support}, which in turn builds on the root-exchange method of \cite{GRS2,GRS}. 

There are three crucial differences between the approach in the current paper and that of \cite{GGS:support}: we consider automorphic forms (rather than mostly local representations), we consider the general case of the relation $S\cge_{\varphi} H$ (rather than requiring $H$ to be neutral), and we give explicit formulas relating various Fourier coefficients, while \cite{GGS:support} concentrates on vanishing properties.
A special case of Algorithm \ref{alg:domin} in which $(H,\varphi)$ is neutral can be established using the technique of \cite{GGS:support}. One could combine this with a similar, but easier, version (Proposition \ref{prop:domin} below) to relate any $\cF_{H,\varphi}$ to any $\cF_{S,\varphi}$. However this would be less useful than Algorithm \ref{alg:domin}, which provides more compact expressions since it proceeds directly from $(H,\varphi)$ to $(S,\varphi)$ without a detour through neutral coefficients.

One can show that for split simply-laced groups the so-called minimal and the next-to-minimal orbits are always PL. Thus Algorithm \ref{alg:domin} and Theorem~\ref{thm:IntTrans} give formulas for automorphic forms attached to the minimal and next-to-minimal representations of simply-laced groups, as well as all their Fourier coefficients, in terms of their Whittaker coefficients.  
We develop these formulas in a companion paper~\cite{Part2}.
Another application of Theorem~\ref{thm:IntTrans} is to deduce  that certain Fourier coefficients are Eulerian~\cite{Eulerianity}.

A Fourier expansion for the discrete spectrum of $\GL_n(\A)$ is provided in \cite{JiangLiu}, generalizing \cite{PiatetskiShapiro,Shalika}. In \S \ref{subsec:GL} below we apply  Algorithm \ref{alg:domin} to provide a further generalization.

\subsection{Structure of the paper}
In \S \ref{sec:prel} we give the definitions of the notions mentioned above, as well as of \emph{Whittaker triples} and \emph{quasi-Fourier coefficients.}
These are technical notions defined in \cite{GGS:support} and widely used in the current paper as well. 

In \S \ref{sec:GenLemmas} we relate Fourier and quasi-Fourier coefficients corresponding to different Whittaker pairs and triples. To do that we further develop the deformation technique of \cite{GGS,GGS:support}, making it both more general, more explicit, and better adapted to the global case.

\S \ref{sec:levi-distinguished} contains the main results of the paper.
In \S \ref{subsec:alg} we describe Algorithm \ref{alg:domin}. 
The algorithm proceeds by deforming a Whittaker pair $(H,\varphi)$ to a bigger pair $(S,\varphi)$ along a straight line $H+t(S-H)$ with $t\in [0,1]$ in the Cartan subalgebra. At certain critical values $t$, additional quasi-Fourier coefficients (associated with higher orbits) are generated. These can be rewritten in terms of higher Fourier coefficients by proceeding in a straight line away from a neutral element, see Algorithm~\ref{alg:quasi}. The final result consists of the main term for $(S,\varphi)$ together with these higher Fourier coefficients. 
In \S \ref{subsec:Pfs} we prove that the algorithm is correct and terminates in a finite number of steps. We also derive Theorem \ref{thm:IntTrans}. In \S \ref{subsec:more} we prove additional results, including a formula for $\cF_{S,\varphi}$ in terms of $\cF_{H,\varphi}$ (this direction is opposite to that of Theorem \ref{thm:IntTrans}). In \S \ref{subsec:LeviDist} we give a constructive proof that 
for any Whittaker pair $(H,\varphi)$, there exists an $S$ such that $(S,\varphi)$ is a Levi-distinguished Whittaker pair, and $S\succcurlyeq_\varphi H$.

In \S \ref{sec:examples} we provide explicit examples in the cases $\SL_4, \GL_n, \Sp_4$ and Heisenberg parabolics of arbitrary simply-laced Lie groups.

Two appendices contain proofs of geometric lemmas on PL-orbits and on our order relation on rational nilpotent orbits.

\subsection{Acknowledgements} 
The authors are grateful for helpful discussions with Ben Brubaker, David Ginzburg and Stephen D. Miller. 
We are particularly thankful to Joseph Hundley for sharing with us his insights  on nilpotent orbits for exceptional groups.
We wish to thank the anonymous referees for very useful comments on an earlier version of this paper.
We also thank the Banff International Research Station for Mathematical Innovation and Discovery and the Simons Center for Geometry and Physics for their hospitality during different stages of this project.

D.G.\ was partially supported by ERC StG grant 637912 and BSF grant 2019724. During his time at Stanford University, H.G.\ was supported by the Knut and Alice Wallenberg Foundation.
Later, H.G.\ was supported by the Swedish Research Council (Vetenskapsr\aa det), grant no.\ 2018-06774. S.S.\ was partially supported by NSF grants DMS-1939600 and DMS-2001537, and Simons' foundation grant 509766. D.P.\ was supported by the Swedish Research Council (Vetenskapsr\aa det), grant no.\ 2018-04760.

\section{Preliminaries on nilpotent orbits and Whittaker pairs}\label{sec:prel}
\setcounter{lemma}{0}

In this section, we fix  basic notation for Whittaker pairs and Fourier coefficients. We also introduce some new preparatory results.

\subsection{Whittaker pairs}
As in the introduction, let $\K$ be a number field, $\mathfrak{o}$ its ring of integers, and let $\A=\A_{\K}$ be its ring of adeles. Let $\mathbb{T} = \{z \in \mathbb{C} : \abs{z} = 1\}$ and fix a non-trivial additive character $\chi : \A \to \mathbb{T}$, which is trivial on $\K$. Then $\chi$ defines an isomorphism between $\A$ and the character group $\hat{\A} := \Hom(\mathbb{A}, \mathbb{T})$ via the map $a\mapsto \chi_{a}$, where $\chi_{a}(b)=\chi(ab)$ for $a,b\in \A$. Furthermore, this isomorphism restricts to an  isomorphism
\begin{equation}\label{eq:chi_isomorphism}
  \widehat{\A/\K} := \Hom(\mathbb{A}/\mathbb{K}, \mathbb{T}) \cong \{r\in \hat{\A}\, : \,r|_{\K}\equiv 1\}=\{\chi_{a} : a \in \K\}\cong \K \, ,
\end{equation}
which means that we may parametrize characters on $\mathbb{A}$ trivial on $\mathbb{K}$ by elements in $\mathbb{K}$.

Let ${\bf G}$ be a reductive group defined over $\K$, ${\bf G}(\mathbb{A})$ the group of adelic points of ${\bf G}$ and let $G$ be a finite central extension of ${\bf G}(\A)$. That is,
\begin{equation}
  1 \to C \to G \xrightarrow{\operatorname{pr}} \mathbf{G}(\mathbb{A}) \to 1
\end{equation}
for some finite group $C$.

We assume that there exists a section ${\bf G}(\K)\to G$ of the projection $\operatorname{pr} : G \onto {\bf G}(\A)$.
Fix such a section and denote its image in $G$ by $\Gamma$. By \cite[Appendix I]{MWCov}, the cover $G\onto {\bf G}(\A)$ canonically splits over unipotent subgroups, and thus we will consider such subgroups as subgroups of $G$.
Let $\fg(\K)$ denote the Lie algebra of $\G(\K) \iso \Gamma$ which we will often abbreviate to $\mathfrak{g}$. 
Let $\mathfrak{v}$ be a nilpotent subalgebra of $\fg$ and let $\fv(\A):=\fv\otimes_{\K}\A$ be its adelization.
As in the introduction, we denote by $\Exp(\fv)$ the unipotent subgroup of $\Gamma$ obtained by exponentiation of $\fv$ using the above split over unipotent subgroups, and we denote by $V:=\Exp(\fv(\A))$ the unipotent subgroup of $G$ obtained by exponentiation of $\fv(\A)$.
We note that $\Exp(\mathfrak{v}) = V \cap \Gamma$ and for later convenience we will denote by $[V]$ the quotient $(V\cap \Gamma)\backslash V$.

To conveniently describe different unipotent subgroups of $G$ and characters on these subgroups we introduce the following notion.

\begin{definition}
\label{def:pair}
A \emph{Whittaker pair} is an ordered pair $(S,\varphi)\in \fg\times \fg^*$ such that $S$ is a rational semi-simple element (that is, a semi-simple element for which the eigenvalues of the adjoint action are in $\Q$), and $\ad^*(S)\varphi=-2\varphi$. 
\end{definition}

We will often identify $\varphi \in \fg^*$ with its dual nilpotent element $f = f_\varphi \in \lie g$ with respect to the Killing form $\langle \, , \, \rangle$.
We will say that $\varphi$ is \emph{nilpotent} if $f_\varphi$ is a nilpotent element of $\fg$. Equivalently, $\varphi\in \fg^*$ is nilpotent if and only if the Zariski closure of its coadjoint orbit includes zero. For example, if $(S,\varphi)$ is a Whittaker pair then $\varphi$ is nilpotent.

For any rational semi-simple $S\in \fg$ and $\lambda\in \Q$ we introduce the following notation 
\begin{equation}
  \fg_{\lambda}^{S}:=\{X\in \fg \, : \, [S,X]=\lambda X\}, \qquad
  \fg_{> \lambda}^{S}:=\bigoplus_{\substack{\mu\in \Q \\ \mu>\lambda}}\fg_{\mu}^S, \qquad
  \fg_{\geq \lambda}^S :=\fg_{\lambda}^{S}\oplus\fg_{>\lambda}^S\, ,
\end{equation}
and analogously for $\fg_{< \lambda}^{S}$ and $\fg_{\leq \lambda}^{S}$, with a similar use of notation for $\fg^*$.

For any $\varphi\in \fg^*$  let $\mathfrak{g}_\varphi$ be the centralizer of $\varphi$ in $\mathfrak{g}$ under the coadjoint action and define an anti-symmetric form $\omega_{\varphi} : \mathfrak{g} \times \mathfrak{g} \to \mathbb{K}$  by $\omega_{\varphi}(X,Y)=\varphi([X,Y])$.
We extend $\varphi$ and $\omega_\varphi$ to a functional and an anti-symmetric form on $\mathfrak{g}(\mathbb{A})$ respectively by linearity.
Given a Whittaker pair $(S,\varphi) \in \g \times \g^*$, we let $\mathfrak{u} := \mathfrak{g}^S_{\geq 1}$ and define 
\begin{equation}\label{=Nsphi}
  \fn_{S,\varphi}:=\{ X \in \mathfrak{u} \,:\, \omega_\varphi(X,Y)=0 \,\,\textrm{for all $Y\in \mathfrak{u}$}\}
  \quad \text{and} \quad
  N_{S,\varphi}:=\Exp \bigl( \fn_{S,\varphi}(\A) \bigr)
\end{equation} 
which, by Lemma~\ref{lem:help} below, can also be written as 
\begin{equation}
    \label{eq:N_Sphi}
    \lie n_{S,\varphi} = \lie g^S_{>1} \oplus (\lie g^S_1 \cap \lie g_\varphi) .
\end{equation}
Note that $\fn_{S,\varphi}$ is an ideal in $\mathfrak{u}$ with abelian quotient, and that $\varphi$ defines a character of~$\fn_{S,\varphi}$. 

We define a corresponding character $\chi_\varphi$ on $N_{S,\varphi}$, trivial on $N_{S,\varphi} \cap \Gamma$, by
\begin{equation}
  \chi_\varphi(n) := \chi(\varphi(\log n)) = \chi(\langle f_\varphi, \log n\rangle) \, .
\end{equation}

More generally, let $\fr\subseteq \fu$ be any isotropic subspace (not necessarily maximal) with respect to $\omega_\varphi|_{\fu}$, that includes $\lie n_{S,\varphi}$. Note that $\fn_{S,\varphi} \subseteq \fr \subseteq \fu$, and that $\fn_{S,\varphi}$ and $\fr$ are ideals in $\fu$. Let $R=\Exp \bigl(\fr(\A)\bigr)$. 
Then $\chi_{\varphi}^R : R \to \mathbb{T}$ defined by $\chi_\varphi^R(r)=\chi(\varphi(\log r))$ is a character of $R$ trivial on $R \cap \Gamma$.
Indeed, since $\mathfrak{r}$ is isotropic, we have that $\omega_\varphi|_{\mathfrak{r(\mathbb{A})}} = 0$ and thus $\chi_\varphi^R \in \Hom(R, \mathbb{T})$, and $\varphi(X) \in \mathbb{K}$ for $X \in \mathfrak{r}(\mathbb{K})$.

\begin{definition}
  We call a function on $G$ an \emph{automorphic function} if it satisfies the following properties:
  \begin{enumerate}
    \item invariant under the left action of $\Gamma$,
    \item finite under the right action of the preimage in $G$ of $\prod_{\text{finite }\nu}{\bf G}(\mathfrak{o}_\nu),$ and
    \item smooth when restricted to the preimage in $G$ of $\prod_{\text{infinite }\nu}{\bf G}(\K_\nu)$. 
  \end{enumerate}
  We denote the space of all automorphic functions by $C^{\infty}(\Gamma\backslash G)$.
\end{definition}

\begin{definition}\label{def:Whit}
    Let $(S,\varphi)$ be a Whittaker pair for $\g$ and let $R, N_{S,\varphi},\chi_{\varphi}$ and $\chi_{\varphi}^{R}$ be as above. For an automorphic function $\eta$, we define the \emph{Fourier coefficient} of $\eta$ with respect to the pair $(S,\varphi)$ to be 
    \begin{equation}\label{eq:Whittaker-Fourier-coefficient}
        \cF_{S,\varphi}[\eta](g):=\intl_{[N_{S,\varphi}]}\eta(ng)\, \chi_{\varphi}(n)^{-1}\, dn.
    \end{equation}
    We also define its \emph{$R$-Fourier coefficient} to be
    the function 
    \begin{equation}\label{eq:extended_Whittaker-Fourier_coefficient}
        \cF_{S,\varphi}^{R}[\eta](g):=\intl_{[R]}\eta(rg)\, \chi_{\varphi}^{R}(r)^{-1}\, dr.
    \end{equation}
    Observe that if $\pi$ denotes a subrepresentation of $C^{\infty}(\Gamma \backslash G)$ that contains $\eta$ then $\cF_{S,\varphi}[\eta]$ and $\cF_{S,\varphi}^{R}[\eta]$ 
    are matrix coefficients corresponding to the vector $\eta\in \pi$ and the functional on the space of automorphic functions defined by the integrals above.
\end{definition}

Note that $\mathcal{F}_{0,0}[\eta]=\eta$. For a general Whittaker pair $(S,\varphi),$ 
$\mathcal{F}_{S,\varphi}[\eta](g)$ is a smooth function on $G$ in the above sense, but is not invariant under $\Gamma$ any more. On the other hand, its restriction to the joint centralizer $G_{S,\varphi}$ of $S$ and $\varphi$ is left $G_{S,\varphi}\cap \Gamma$-invariant. As shown in \cite{GH}, if $\eta$ is also $\fz$-finite and has moderate growth, then the restriction of $\eta$ to $G_{S,\varphi}$ still has moderate growth, but may stop being $\fz$-finite.

\begin{remark}
In \cite[\S 6]{GGS} the integrals~\eqref{eq:Whittaker-Fourier-coefficient} and~\eqref{eq:extended_Whittaker-Fourier_coefficient} above are called Whittaker--Fourier coefficients, but in this paper we call them Fourier coefficients for short.
\end{remark}

\begin{definition}
  A Whittaker pair $(h,\varphi)$ is called \emph{neutral} if either $(h,\varphi)=(0,0)$, or $h$ and the Killing form pairing $f = f_\varphi$ with $\varphi$ can be completed to an $\sl_2$-triple $(e,h,f)$. Equivalently, $(h, \varphi)$ is called neutral if the map $X \mapsto \ad^*(X)\varphi$ defines an epimorphism $\fg^h_{0}\onto(\fg^*)^h_{-2} $, and $h$ can be completed to an $\sl_2$-triple.
  For more details on $\sl_2$-triples over arbitrary fields of characteristic zero see \cite[\S 11]{Bou}. 
\end{definition}

\begin{definition}
We say that $(S,\varphi,\varphi')$ is a \emph{Whittaker triple} if $(S,\varphi)$ is a Whittaker pair and $\varphi'\in (\fg^*)^{S}_{>-2}$.
\end{definition}

For a Whittaker triple $(S,\varphi,\varphi')$, let $U,R,$ and $N_{S,\varphi}$ be as in Definition \ref{def:Whit}. Note that $\varphi+\varphi'$ defines a  character of $\fr$. Extend it by linearity to a character of $\fr(\A)$ and define an automorphic character $\chi_{\varphi+\varphi'}$ of $R$ by $\chi_{\varphi+\varphi'}^{R}(\exp X):=\chi(\varphi(X)+\varphi'(X))$.
For an example for this notation see \S \ref{subsec:Whit3} below.

\begin{definition} For an automorphic function $f$, we define its \emph{$(S,\varphi,\varphi')$-quasi Fourier coefficient} to be the  function
\begin{equation}\label{eq:Quasi-Whittaker-coefficient}
 \cF_{S,\varphi,\varphi'}[\eta](g):=\intl_{[N_{S,\varphi}]}\chi_{\varphi+\varphi'}(n)^{-1}\eta(ng)dn.
\end{equation}
We also define its \emph{$(S,\varphi,\varphi',R)$-quasi Fourier coefficient} to be
the  function
 \begin{equation}\label{eq:quasi_Whittaker}
\cF_{S,\varphi,\varphi'}^{R}[\eta](g):=\intl_{[R]}\chi_{\varphi+\varphi'}^{R}(r)^{-1}\eta(rg)dr.
 \end{equation}
 \end{definition}

\begin{definition}
We call a $\K$-subgroup of $\bf G$ a \emph{split torus of rank $m$} if it is isomorphic as a $\K$-subgroup to $\GL_1^m$. We call a Lie subalgebra $\fl \subseteq \fg$ a $\K$-Levi subalgebra if it is the centralizer of a split torus.
\end{definition}

\begin{remark}
We note that the Lie algebra of any split torus is spanned by rational semisimple elements. Consequently, a subalgebra of $\fl\subseteq \fg$ is a $\K$-Levi subalgebra if and only if it is the centralizer of a rational semisimple element of $\fg$. Another equivalent condition is that $\fl$ is the Lie algebra of a Levi subgroup of a parabolic subgroup of $G$ defined over $\K$.
\end{remark}

For convenience, we fix a complex embedding  $\sigma:\K\into\C$, which allows us to map a $\Gamma$-orbit $\Oh$ in $\mathfrak{g}$ to a $\mathbf{G}(\mathbb{C})$-orbit in $\mathfrak{g}(\mathbb{C}) := \mathfrak{g} \otimes_{\sigma(\mathbb{K})} \mathbb{C}$.
One can show, using \cite{Dok}, that the complex orbit corresponding to $\cO$ does not depend on $\sigma$. However, we will not need this fact.

\begin{defn}\label{def:dominate}
    Let $(H,\varphi)$ and $(S,\varphi)$ be two Whittaker pairs with the same $\varphi$. We say that $(H,\varphi)$ \emph{dominates} $(S,\varphi)$ if $H$ and $S$ commute and 
\begin{equation}\label{=domin}
        \fg_{\varphi}\cap \fg^H_{\geq 1}\subseteq \fg^{S-H}_{\geq0}\,.
    \end{equation}
\end{defn}

This relation is denoted $S \cge_{\varphi} H$ in the introduction.

\subsection{Principal nilpotent elements, PL elements and standard Whittaker pairs}
\label{subsec:defPL}

We introduce some notions for coadjoint nilpotent orbits under the action of $\Gamma$. For general results on nilpotent orbits over algebraically closed fields see~\cite{Carter,CM}.

\begin{defn}
We say that a nilpotent orbit under $\Gamma$ in $\fg^*$ is \emph{principal} if it is Zariski dense in the nilpotent cone $\mathcal{N}(\fg^*)$.
We say that $\varphi\in \fg^*$ is a 
 \emph{principal nilpotent element} if its orbit is principal. 
 
 We say that a nilpotent $\varphi\in \fg^*$ is \emph{principal in a Levi} (or PL for short) if there exists a $\K$-Levi subalgebra $\fl\subset \fg$ and a nilpotent element $f\in \fl$ such that the Killing form pairing with $f$ defines $\varphi$ in $\fg^*$, and a principal nilpotent element of $\fl^*$. We call a nilpotent $\Gamma$-orbit in $\fg^*$ a \emph{PL-orbit} if it consists of PL elements.
\end{defn}

We remark that if $G$ is quasi-split then a nilpotent element $\varphi\in \fg^*$ is principal if and only if it is regular, {\it i.e.} the dimension of its centralizer equals the rank of $\fg$. 
\begin{lemma}\label{lem:nN}
Let $\fn$ be the nilpotent radical of the Lie algebra of a minimal parabolic subgroup $P_0$. Then $\fn$ intersects any nilpotent orbit under $\Gamma$ in $\fg$.
\end{lemma}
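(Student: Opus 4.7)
Let $x\in\fg$ be a nilpotent element. The goal is to produce $\gamma\in\Gamma$ with $\Ad(\gamma)x\in\fn$. The plan is to attach to $x$ a $\K$-parabolic subgroup whose nilradical contains $x$, and then use Borel--Tits conjugacy of minimal $\K$-parabolics to move it inside $P_0$.

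The first step is to apply the Jacobson--Morozov theorem over $\K$ (valid in characteristic zero, cf.~\cite[\S 11]{Bou}) to complete $x$ to an $\sl_2$-triple $(x,h,y)\in\fg\times\fg\times\fg$, with $[h,x]=2x$, $[h,y]=-2y$, $[x,y]=h$. Since $\ad(h)$ has eigenvalues in $\Z\subset\Q$ (by $\sl_2$ representation theory), $h$ is a rational semisimple element of $\fg$. Consequently the parabolic subgroup $Q\subset\mathbf G$ with Lie algebra $\fq:=\fg^h_{\geq 0}$ is defined over $\K$, with $\K$-Levi decomposition $Q=MN_Q$, where $\Lie(N_Q)=\fg^h_{>0}$. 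The identity $[h,x]=2x$ places $x\in\fg^h_{2}\subseteq\Lie(N_Q)$.

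Now I would invoke the Borel--Tits structure theory of reductive groups over an arbitrary field: every $\K$-parabolic subgroup of $\mathbf G$ contains a minimal $\K$-parabolic, and any two minimal $\K$-parabolics are conjugate under $\mathbf G(\K)=\Gamma$. Let $P'\subseteq Q$ be a minimal $\K$-parabolic, and choose $\gamma\in\Gamma$ with $\gamma P'\gamma^{-1}=P_0$. Then $P_0\subseteq \gamma Q\gamma^{-1}=:Q'$, which is a $\K$-parabolic containing the minimal one $P_0$. For such an inclusion of parabolics the unipotent radicals reverse inclusion, so $\Lie(N_{Q'})\subseteq\Lie(N_{P_0})=\fn$. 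Applying $\Ad(\gamma)$ to the containment $x\in\Lie(N_Q)$ yields $\Ad(\gamma)x\in\Lie(N_{Q'})\subseteq\fn$, as desired.

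The only substantive point to justify carefully is the Borel--Tits conjugacy of minimal $\K$-parabolics and the fact that a parabolic $Q'\supseteq P_0$ has unipotent radical contained in $N_{P_0}$; the rest is a direct application of Jacobson--Morozov together with the rationality of the $\ad(h)$-grading. I expect no further obstacles.
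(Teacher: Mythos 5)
Your proposal is correct and follows essentially the same route as the paper's own proof: complete the nilpotent element to an $\sl_2$-triple via Jacobson--Morozov, place it in the nilradical of the $\K$-parabolic determined by the grading of $h$, and use Borel--Tits conjugacy of minimal $\K$-parabolics (\cite[Thm.~4.13(b)]{BT}) to move it into $\fn$. The only difference is cosmetic: you spell out the reversal of unipotent-radical inclusions for $P_0\subseteq Q'$, which the paper leaves implicit.
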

\begin{proof}
Let $f\in \fg$ be nilpotent, and complete to an $\sll_2$-triple $(e,h,f)$. Then $h$ defines a parabolic subgroup $P$ which then includes a minimal parabolic $Q_0$. Then $Q_0$ is conjugate to $P_0$ under $\Gamma$ (see \cite[Thm.~4.13(b)]{BT}). Since $f$ lies in the nilpotent radical of the Lie algebra of $P$, its conjugate will lie in $\fn$.
\end{proof}

\begin{defn}
We say that a Whittaker pair $(S,\varphi)$ is \emph{standard} if $\fn_{S,\varphi}$ is the nilpotent radical of the Lie algebra of a minimal parabolic subgroup of $G$.
In this case we will call the Fourier coefficient $\cF_{S,\varphi}$ a \emph{Whittaker coefficient}. 
\end{defn}

\begin{remark}
In \cite[\S 6]{GGS} the  Whittaker coefficients are called principal degenerate Whittaker--Fourier coefficients.
\end{remark}

\begin{cor}\label{cor:PrinNeut}
A nilpotent element $\varphi\in \fg^*$ is principal if and only if it can be completed to a neutral standard Whittaker pair. 
\end{cor}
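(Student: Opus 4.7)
The plan is to address the two implications separately, using throughout the explicit description $\fn_{h,\varphi}=\fg^h_{>1}\oplus(\fg^h_1\cap\fg_\varphi)$ from \eqref{eq:N_Sphi} and standard $\sll_2$-theory applied to the triple $(e,h,f)$ with $f=f_\varphi$ produced by Jacobson--Morozov.

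For the forward direction, assume $\varphi\in\fg^*$ is principal. Completing $f_\varphi$ to an $\sll_2$-triple $(e,h,f)$ makes $(h,\varphi)$ neutral by definition. The defining features of the principal $\sll_2$ in a reductive Lie algebra are that the adjoint $h$-grading has only even integer weights (in particular $\fg^h_1=0$, so that $\fn_{h,\varphi}=\fg^h_{\geq 2}$) and that $\fg^h_{\geq 0}$ is the Lie algebra of a minimal parabolic of $\G$ defined over $\K$ whose nilradical is precisely $\fg^h_{\geq 2}$. This shows $(h,\varphi)$ is a neutral standard Whittaker pair.

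For the backward direction, given a neutral standard pair $(h,\varphi)$ with $\fn_{h,\varphi}=\fn$ equal to the nilradical of a minimal parabolic $\fp_0=\fl_0\oplus\fn$, I would first observe that $\sll_2$-theory forces $\fg_\varphi\subseteq\fg^h_{\leq 0}$, whence $\fg^h_1\cap\fg_\varphi=0$ and $\fn_{h,\varphi}=\fg^h_{\geq 2}$. The identity $\dim\fg=\dim\fl_0+2\dim\fn$ combined with the $e\leftrightarrow f$ symmetry $\dim\fg^h_i=\dim\fg^h_{-i}$ yields $\dim\fl_0=\dim\fg^h_0+2\dim\fg^h_1$. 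Decomposing $\fg$ as an $\sll_2$-module into a sum of irreducibles $V(k)$ of highest weight $k$, one reads off that $\dim\fg^h_0+2\dim\fg^h_1-\dim\fg_\varphi$ equals the number of odd-weight components, so the standardness constraint pins down the multiplicities and forces $\dim\fg_\varphi$ to attain its minimum among nilpotent $\varphi$. This makes $\varphi$ regular, hence principal.

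I expect the main technical hurdle to be the dimension analysis in the backward direction, particularly in the non-quasi-split setting where $\dim\fl_0>\operatorname{rank}(\fg)$ and one cannot directly equate these two dimensions. A clean resolution likely requires passing to $\overline{\K}$ via the fixed embedding $\sigma:\K\into\C$ (which preserves the notion of a principal orbit by Zariski-density considerations) and arguing that the $\sll_2$-decomposition over $\overline{\K}$ inherits enough rigidity from the standardness of $(h,\varphi)$ to conclude that $\varphi$ lies in the unique dense nilpotent orbit of $\mathcal{N}(\fg^*)$.
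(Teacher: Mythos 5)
Your two directions are both routed through the identification of the paper's ``principal'' with the Kostant regular/principal nilpotent of the split or complex theory, and that identification is exactly what fails outside the quasi-split case --- which is the only case where the corollary says anything beyond standard $\sll_2$ facts. In this paper ``principal'' means that the $\Gamma=\G(\K)$-orbit of $\varphi$ is Zariski dense in the nilpotent cone of $\fg^*$, and the paper explicitly notes this coincides with regularity only for quasi-split $\G$. So in your forward direction, quoting as ``defining features of the principal $\sll_2$'' that the grading is even and that $\fg^h_{\geq 0}$ is a minimal $\K$-parabolic with nilradical $\fg^h_{\geq 2}$ begs the question: for a non-quasi-split group the $\K$-principal element is not regular over $\C$, its $\sll_2$ is not the Kostant principal one, and evenness together with minimality of $\fg^h_{\geq 0}$ over $\K$ is precisely what has to be proved. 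The paper gets this from a Richardson-orbit dimension count: density of $\Gamma\varphi$ forces $\overline{\cO}\supseteq \fg^h_{<0}$, so $\dim\cO=2\dim\fg^h_{<0}$, and if $(h,\varphi)$ were not standard a strictly larger nilradical of a smaller $\K$-parabolic inside $\overline{\cO}$ would give $\dim\cO>\dim\cO$. Nothing in your sketch plays the role of this step.

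In your backward direction the chain ``$\dim\fg_\varphi$ minimal $\Rightarrow$ regular $\Rightarrow$ principal'' breaks at both arrows in general: the minimal centralizer dimension among $\K$-rational nilpotents need not equal the rank (indeed, when $\G$ is not quasi-split there are no rational regular nilpotents at all, since a Jacobson--Morozov $h$ for one would produce a Borel defined over $\K$), so a neutral standard pair yields an element that is principal in the paper's sense without being regular. Your proposed repair --- that base change along $\sigma:\K\into\C$ ``preserves the notion of a principal orbit'' --- is also false in this setting: the complexification of a $\K$-principal orbit is the Richardson orbit of a minimal $\K$-parabolic, which is dense in the complex nilpotent cone only in the quasi-split case, while the dense complex orbit then has no $\K$-points. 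More fundamentally, your dimension bookkeeping never engages the actual definition (Zariski density of the $\Gamma$-orbit among rational nilpotents). The paper's proof of this direction is rational and short: since $\fg^h_1=0$ one has $[f,\fg^h_{\leq 0}]$ equal to the opposite nilradical $\fn$, so the orbit of $f$ under the minimal parabolic is Zariski open, hence dense, in $\fn$, and Lemma \ref{lem:nN} (every nilpotent $\Gamma$-orbit meets $\fn$) then upgrades density in $\fn$ to density in the whole nilpotent cone. Some substitute for these two ingredients --- openness of the parabolic orbit and the conjugation statement of Lemma \ref{lem:nN} --- is needed and is absent from your argument.
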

\begin{proof}
Let $h$ complete $\varphi$ to a neutral standard Whittaker pair. Then $\fn_{h,\varphi}=\fg^h_{>1}=\fg^h_{>0}$ is the nilpotent radical of the Lie algebra of a minimal parabolic subgroup, and thus so is \(\fn:=\fg^h_{<1}\).   Let $f\in \fn$  define $\varphi$ through the Killing form pairing. Then we have $[f,\fg^h_{\leq 0}]=\fn$ and thus $\Gamma f\cap \fn$ is Zariski open, and thus Zariski dense, in $\fn$. The statement follows now from Lemma \ref{lem:nN}.

Conversely, let $\varphi\in \fg^*$ be a principal nilpotent, and let $(e,h,f)$ be an $\sll_2$-triple such that $f$ defines $\varphi$ via the Killing form. Let $\cO$ denote  the complex orbit of $f$ and $\bar \cO$ denote its Zariski closure. Then $\bar \cO= \cN(\fg)\supset \fg^h_{<0}$. Thus $\cO$ is the Richardson orbit for $\fg^h_{\leq 0}$, and thus $\dim \cO=2\dim \fg^h_{<0}$. Now suppose by way of contradiction that the pair $(h,\varphi)$\ is not standard. Then $\fg^h_{\leq 0}$ is not a minimal $\K$-parabolic subalgebra, {\it i.e.} there exists a smaller $\K$-parabolic subalgebra $\mathfrak{p}$ with nilpotent radical $\fn\supsetneq \fg^h_{<0}$. But  $\fn \subset \cN=  \bar \cO$, and thus $\cO$ is a Richardson orbit for $\mathfrak{p}$, thus $\dim \cO = 2\dim \fn>2\dim \fg^h_{<0}=\dim \cO$ - contradiction.
\end{proof}

\begin{cor}
A nilpotent $\varphi\in \fg^*$ is PL if and only if it can be completed to a standard Whittaker pair $(S,\varphi)$. 
\end{cor}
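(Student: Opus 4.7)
The plan is to prove both directions of the biconditional by passing through Corollary~\ref{cor:PrinNeut}, which characterizes principal nilpotents in a reductive Lie algebra as those admitting a neutral standard Whittaker pair. The bridge between the two statements will be a $\K$-Levi subalgebra $\fl$ and a neutral pair $(h,\varphi)$ in $\fl$.

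For the forward direction I will start from a PL element $\varphi$ and a $\K$-Levi $\fl$ containing $f_\varphi$ in which $\varphi$ is principal, pick a rational semisimple $Z\in\fg$ with $\fl=\fg^Z_0$, and use Corollary~\ref{cor:PrinNeut} applied to $\fl$ to obtain a neutral standard pair $(h,\varphi)$ there. The candidate for the global Whittaker pair will be $S:=h+tZ$ for rational $t$ sufficiently large. The key checks are that $\ad^*(Z)\varphi=0$ (which follows from the Killing orthogonality $\langle\fg^Z_a,\fg^Z_b\rangle=0$ for $a+b\neq 0$ and $f_\varphi\in\fl$), that $[h,Z]=0$ and $\ad^*(S)\varphi=-2\varphi$ so that $(S,\varphi)$ is a Whittaker pair, and, using the joint eigenspace decomposition of $\ad(h)$ and $\ad(Z)$ together with the classical fact that $\ad(h)$ has only even eigenvalues on $\fl$ for principal $\varphi$, that for $t$ large $\fn_{S,\varphi}=\fg^Z_{>0}\oplus\fl^h_{>0}$. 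This is the nilradical of the parabolic $\fl^h_{\geq 0}\oplus\fg^Z_{>0}$ of $\fg$, whose Levi $\fl^h_0$ centralizes a maximal $\K$-split torus $A$ of $\fl$; I will note that $A$ is automatically maximal in $\fg$ (since any $\K$-split torus of $\fg$ containing $A$ centralizes it and hence lies in $\fl$), whence the parabolic is minimal in $\fg$ and $(S,\varphi)$ is standard.

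For the converse I will take a standard pair $(S,\varphi)$, apply the Kostant--Mal'cev style adjustment of the Jacobson--Morozov triple (already used in \cite{GGS} and recorded in the preliminaries here) to find a neutral $h$ for $\varphi$ with $[h,S]=0$, set $Z:=S-h$ and $\fl:=\fg^Z_0$, and observe that projecting the nilpositive element of the $\sl_2$-triple onto $\fg^Z_0$ yields an $\sl_2$-triple lying in $\fl$, so that $(h,\varphi)$ is neutral for $\fl$. Since $Z$ acts trivially on $\fl$, a direct calculation gives $\fl\cap\fn_{S,\varphi}=\fn_{h,\varphi}^\fl$, and by Corollary~\ref{cor:PrinNeut} applied to $\fl$ it will be enough to show that this intersection is the nilradical of a minimal parabolic of $\fl$. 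I expect this to be the hardest step, and I will handle it as follows: let $\fp_0$ be the minimal parabolic of $\fg$ with nilradical $\fn_{S,\varphi}$; both $S$ and $h$ normalize $\fn_{S,\varphi}$ (the latter because $[h,\fg_\varphi]\subseteq\fg_\varphi$ follows from $\ad^*(h)\varphi=-2\varphi$), so $Z=S-h$ lies in $\fp_0$, and being semisimple it lies in a Levi $\fm=Z_\fg(A)$ of $\fp_0$ for some maximal $\K$-split torus $A$ of $\fg$. Then $A\subseteq\fl$, $A$ is still maximal split in $\fl$, and decomposing $\fg$ along the relative $A$-root system will exhibit $\fp_0\cap\fl$ as the minimal parabolic of $\fl$ with Levi $\fm\cap\fl$ and nilradical $\fl\cap\fn_{S,\varphi}=\fn_{h,\varphi}^\fl$. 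The main obstacle is precisely this structure-theoretic juggling --- placing $Z$ inside a Levi of $\fp_0$ and identifying $\fp_0\cap\fl$ through the relative root decomposition --- which must be carried out carefully since $Z$ itself need not generate a $\K$-split torus.
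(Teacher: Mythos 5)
Your proposal is correct and follows essentially the same route as the paper: both directions pass through Corollary~\ref{cor:PrinNeut}, using the decomposition $S=h+Z$ from Lemma~\ref{lem:Z} with $\fl=\fg^Z_0$ in one direction and taking $S=h+TZ$ with $T$ large in the other. The only difference is that you spell out the structure-theoretic details (evenness of the $h$-grading on $\fl$, placing $Z$ in a Levi of $\fp_0$, and the relative-root identification of $\fp_0\cap\fl$) that the paper's two-line proof leaves implicit.
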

\begin{proof}
Let $(S,\varphi)$ be a standard Whittaker pair. Then $S=h+Z$ where $(h,\varphi)$ is neutral and commutes with $Z$.
Then $Z$ defines a Levi subalgebra $\fl$, and the Whittaker pair $(h,\varphi)$ is neutral and standard in $\fl$. By Corollary \ref{cor:PrinNeut}, $\varphi$ is principal in $\fl$.

Conversely, if $\varphi$ is principal in $\fl$ and $Z$ defines $\fl$ we let $S:=TZ+h$ for $T\in \Q_{>0}$ big enough. Then  $(S,\varphi)$ is a standard Whittaker pair.
\end{proof}
Let us remark that in \cite{GGS} a different definition of principal and PL elements was given. The following lemma states the equivalence of the definitions.
\begin{lemma}
Let $\varphi\in \fg^*$ be nilpotent. Then 

\begin{enumerate}
 \item $\varphi$ is PL if and only if there exist a maximal split toral subalgebra $\fa$ of $\lie g$ and a choice of associated simple roots $\Pi$ such that $\varphi \in \bigoplus_{\alpha_i \in \Pi} \lie g^*_{\alpha_i}$, where $g^*_{\alpha_i}$ denotes the dual of the root space $\fg_{\alp_i}$.
\item
If $\varphi \in \bigoplus_{\alpha_i \in \Pi} \lie g^*_{\alpha_i}$ then $\varphi$ is principal in the Levi {subalgebra} given by those simple roots $\alp_i$ for which the projection of $\varphi$ to $\lie g^*_{\alpha_i}$ is non-zero.
\end{enumerate}
\end{lemma}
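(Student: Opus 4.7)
The plan is to establish (2) directly and then derive (1) from it; the ``if'' direction of (1) is immediate from (2) (the required Levi being the one produced by (2)), so only the ``only if'' of (1) needs separate work.

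For (2), set $S\subseteq \Pi$ to be those simple roots on which $\varphi$ has non-zero projection, and let $\fl_S$ be the corresponding standard $\K$-Levi subalgebra (generated by $\fa$ together with $\fg_{\pm\alpha}$ for $\alpha$ a restricted root lying in $\Z\cdot S$). Dualizing via the Killing form gives $f_\varphi=\sum_{\alpha\in S}f_\alpha$ with each $f_\alpha\in \fg_{-\alpha}\setminus\{0\}$, so $f_\varphi\in \fl_S$. I will pick $h\in \fa$ with $\alpha(h)=2$ for all $\alpha\in S$; then $(h,\varphi)$ is a Whittaker pair. The task is to check that this pair is neutral and standard \emph{inside} $\fl_S$: standardness follows because the $\geq 1$ eigenspace of $\ad(h)$ in $\fl_S$ equals the sum of positive restricted root spaces of $\fl_S$, hence is the nilradical of a minimal $\K$-parabolic of $\fl_S$; neutrality reduces to surjectivity of $\ad(f_\varphi):(\fl_S)^h_0\to (\fl_S)^h_{-2}$, which is a concrete check in the root space decomposition once each $f_\alpha$ is known to be non-zero. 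Corollary~\ref{cor:PrinNeut} applied inside $\fl_S$ then yields that $\varphi$ is principal in $\fl_S$.

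For the ``only if'' of (1), assume $\varphi$ is principal in some $\K$-Levi $\fl\subseteq \fg$. Corollary~\ref{cor:PrinNeut} applied inside $\fl$ produces $h\in \fl$ making $(h,\varphi)$ a neutral standard Whittaker pair in $\fl$. Choose a maximal $\K$-split toral subalgebra $\fa\subseteq \fl$ containing $h$; since $\fl$ is the centralizer of a split torus lying in $\fa$, such $\fa$ is automatically maximal split in $\fg$ (any larger split toral in $\fg$ would centralize that torus, hence lie in $\fl$, contradicting maximality of $\fa$ in $\fl$). The standard-neutral data picks out a minimal $\K$-parabolic of $\fl$, whose simple restricted roots $\Pi_\fl$ can be extended to a full simple system $\Pi$ of the restricted root system of $\fg$ with respect to $\fa$. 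It then suffices to show $f_\varphi\in \bigoplus_{\alpha\in \Pi_\fl}\fg_{-\alpha}$, whence $\varphi\in \bigoplus_{\alpha\in \Pi}\fg^*_\alpha$ by duality.

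The crux is this last containment: showing that a principal nilpotent $f_\varphi$ in $\fl$, paired with the $h$ coming from a neutral standard Whittaker pair, is supported only on simple negative root spaces. The plan is to combine three constraints: standardness of $(h,\varphi)$ forces $\alpha(h)\geq 1$ for every positive restricted root of $\fl$; neutrality forces $\alpha(h)=2$ exactly on the support of $f_\varphi$; and principality of $f_\varphi$ forces $\dim \fl_{\varphi}=\rk\fl$, which pins down the dimension of $(\fl)^h_{-2}$. Together these will force $\alpha(h)=2$ precisely when $\alpha\in \Pi_\fl$ and $\alpha(h)>2$ on every non-simple positive restricted root, so that $(\fl)^h_{-2}=\bigoplus_{\alpha\in \Pi_\fl}\fg_{-\alpha}$ and $f_\varphi$ lies in this sum as required.
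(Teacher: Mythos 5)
Your overall architecture is the paper's: part (2) and the ``if'' of (1) are proved exactly as in the paper, by attaching to $\varphi$ the element $h$ with $\alpha(h)=2$ on the support and invoking Corollary~\ref{cor:PrinNeut} inside the Levi, and your ``only if'' of (1) also starts the same way (Corollary~\ref{cor:PrinNeut} in $\fl$, then extension of $\fa$ and of the simple system). Two small inaccuracies in your part (2): you must take $h$ in the span of the coroots $\alpha^\vee$, $\alpha\in S$ (as the paper does with $h=\sum c_i\alp_i^\vee$), since any $h\in\fa$ with $\alpha(h)=2$ on $S$ differs from the neutral element by a central summand of $\fl_S$ and then $(h,\varphi)$ is standard but not neutral; and for non-split groups the surjectivity of $\ad(f_\varphi):(\fl_S)^h_0\to(\fl_S)^h_{-2}$ is not a pure torus/root-space check, because restricted root spaces can have dimension $>1$ and one needs the anisotropic part of $(\fl_S)^h_0$ (the paper is admittedly equally brief here, appealing to genericity and Jacobson--Morozov).

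The genuine gap is in the ``crux'' of your ``only if'' argument. Your ingredient (c), ``principality of $f_\varphi$ forces $\dim\fl_\varphi=\rk\fl$,'' is the equivalence principal~$\Leftrightarrow$~regular, which the paper asserts only for quasi-split groups; for a general $\K$-Levi (e.g.\ an inner form such as $\GL_2(D)$ for a division algebra $D$, where the principal rational orbit consists of square-zero elements) it fails, so the dimension count does not pin down $(\fl)^h_{-2}$. Moreover, as stated your constraints do not yet force the conclusion: (a) only gives $\alpha(h)\geq 1$, which allows a non-simple positive root of value $1+1=2$, and (b) is not a consequence of neutrality alone. The step that actually does the work --- and makes the appeal to principality and to any dimension count unnecessary --- is that standardness together with neutrality forces $\fl^h_1=0$ (standardness gives $\fn_{h,\varphi}=\fl^h_{>0}$, while $\fg_\varphi\cap\fl^h_{\geq 1}=0$ for a neutral pair), hence by $\sll_2$-theory all eigenvalues of $h$ are even, every positive restricted root has value $\geq 2$, every value-$2$ root is simple, and therefore $f_\varphi\in\fl^h_{-2}\subseteq\bigoplus_{\alpha\in\Pi_\fl}\fg_{-\alpha}$, which is all you said you needed. (If you insist on the stronger statement that $\alpha(h)=2$ exactly on $\Pi_\fl$, the correct general tool is $\K$-distinguishedness of principal elements, Lemma~\ref{lem:PrinDist}, not the regularity dimension count.) This parity/$\fl^h_1=0$ observation is absent from your outline, and without it the plan as written does not close.
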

\begin{proof}
Let $\sum_{\alp_i\in \Pi} \fg^{\times}_{\alp_i}\subset \bigoplus_{\alpha_i \in \Pi} \lie g^*_{\alpha_i}$ denote the subset of vectors with all projections non-zero. It is enough to show that $\varphi\in \fg^*$ is principal if and only if there exist $(\fa,\Pi)$ as above such that $\varphi \in \sum_{\alp_i\in \Pi} \fg^{\times}_{\alp_i}$.

To show that, assume first that $\varphi$ is principal. Then, by Corollary \ref{cor:PrinNeut}, $\varphi$ can be completed to a neutral standard pair $(h,\varphi)$. Then $h$ defines a torus and simple roots, and we have $\varphi\in \sum_{\alp_i\in \Pi} \fg^{\times}_{\alp_i}$. 
Conversely, give $\fa$ and $\Pi$ as above, we let $h:=\sum c_i \alp_i^\vee$, where $\alp_i^\vee$ are the coroots given by scalar product with $\alp_i$, and $c_i$ are chosen such that $\varphi\in (\fg^*)^h_{-2}$. Then $(h,\varphi)$ is a standard Whittaker pair. Moreover, $\varphi$ is a generic element of
$(\fg^*)^h_{-2}$ and thus the Jacobson--Morozov theory implies that $(h,\varphi)$ is a neutral pair.
\end{proof}

\begin{remark}
Note that for $G=\GL_n(\A)$ all orbits $\mathcal{O}$ are PL-orbits. In general this is, however, not the case, see Appendix~\ref{subsec:PL} for details.
\end{remark}

\pagebreak
\subsection{Levi-distinguished Fourier coefficients}\label{subsec:Levi-DistPrel}
\begin{definition}
We say that a nilpotent $f \in \fg$ is \emph{$\K$-distinguished}, if it does not belong to a proper $\K$-Levi subalgebra $\fl \subsetneq \fg$. 
In this case we will also say that $\varphi\in \fg^*$ given by the Killing form pairing with $f$ is $\K$-distinguished. We will also say that the orbit of $\varphi$ is $\K$-distinguished.
\end{definition}

\begin{example}
The nilpotent orbits for $\mathrm{Sp}_{2n}(\C)$ are given by partitions of $2n$ such that odd parts have even multiplicity. Each such orbit, except the zero one, decomposes to infinitely many $\mathrm{Sp}_{2n}(\Q)$-orbits - one for each collection of equivalence classes of quadratic forms $Q_1,\dots,Q_k$ of dimensions $m_1,\dots m_k$ where $k$ is the number of even parts in the partition and $m_1,\dots m_k$ are the multiplicities of these parts. 
A complex orbit is distinguished over $\mathbb{C}$ (\ie does not intersect a proper Levi subalgebra) if and only if all parts have multiplicity one (and thus there are no odd parts). To see the ``only if'' part note that  if the partition includes a part $k$ with multiplicity two then the orbit intersects the Levi $\GL_k \times   \mathrm{Sp}_{2(n-k)}$. If $k$ is odd then this Levi is defined over $\Q$ and thus all $\Q$-distinguished orbits correspond to totally even partitions. If $k$ is even then this Levi is defined over $\Q$ if and only if the quadratic form on the multiplicity space of $k$ is anisotropic. Thus, we obtain that a necessary condition for an orbit $\cO$ to be $\Q$-distinguished is that its partition $\lam(\cO)$ is totally even, a sufficient condition is that $\lam(\cO)$ is multiplicity free, and for totally even partitions with multiplicities there are infinitely many $\Q$-distinguished orbits and at least one not $\Q$-distinguished.  For example, for the partition $(4,2)$ all orbits in $\mathfrak{sp}_{6}(\Q)$ are $\Q$-distinguished, for the partition $2^3$ some orbits are $\Q$-distinguished and some are not, and all other partitions do not correspond to $\Q$-distinguished
orbits.
\end{example}

\begin{lem}\label{lem:PrinDist}
Every principal nilpotent element is $\K$-distinguished. 
\end{lem}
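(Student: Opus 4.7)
My plan is to prove the contrapositive: if $f=f_\varphi\in\fg$ lies in a proper $\K$-Levi subalgebra $\fl\subsetneq\fg$, then $\varphi$ is not principal. The strategy is to exhibit a nonzero semisimple element of the derived subalgebra $\fs:=[\fg,\fg]$ that centralizes $f$, and then invoke Kostant's description of the centralizer of a regular nilpotent to obtain a contradiction.

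For the first step, I would use that $\fl$ is the centralizer of a $\K$-split torus $T\subseteq\mathbf{G}$ not contained in $Z(\mathbf{G})$; hence the Lie algebra $\Lie(T)$, whose elements are automatically semisimple, is contained in $\fl$ but not in $\fz(\fg)$. Picking $Z\in\Lie(T)\setminus\fz(\fg)$ and decomposing $Z=z+Z'$ according to the direct sum $\fg=\fz(\fg)\oplus\fs$, the element $Z'\in\fs$ is nonzero, semisimple, and satisfies $[Z',f]=0$ (since $Z$ centralizes $\fl\ni f$ and $[z,f]=0$ because $z\in\fz(\fg)$).

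To derive the contradiction I would invoke Corollary~\ref{cor:PrinNeut} to complete the supposedly principal $\varphi$ to a neutral standard Whittaker pair $(h,\varphi)$, producing an $\sl_2$-triple $(e,h,f)$ in $\fs$. Kostant's theorem on regular nilpotents, a direct consequence of $\sl_2$-representation theory applied to $\fs$, shows that the centralizer $\fs_f$ is an $\ad(h)$-stable subspace of dimension $\rk(\fs)$ concentrated in weights $-2m_1,\ldots,-2m_{\rk\fs}$ corresponding to the positive exponents $m_i\geq 1$ of $\fs$; in particular $\fs_f\subseteq\fs^h_{<0}$ consists of nilpotent elements, which directly contradicts the nonzero semisimple $Z'\in\fs_f$ constructed above. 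The main subtlety I anticipate is that Kostant's theorem is classically stated over $\C$, so I would need to remark that $\K$-principality of $f$ (Zariski density of $\Gamma\cdot f$ in $\mathcal{N}(\fg)$) implies regularity of $f$ in $\fg(\C)$, because maximal orbit dimension is preserved under base change; this is essentially the content of the existence of the complex orbit attached to $\Oh$ discussed earlier.
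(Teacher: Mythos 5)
Your proof is correct, but it takes a genuinely different route from the paper's. The paper argues by contradiction using the rational semisimple element $Z$ defining the proper Levi to build an explicit deformation: it picks an irreducible $\sl_2$-submodule of $\fg$ on which $Z$ acts by a negative scalar, takes its highest weight vector $v$, observes that $v+f$ is nilpotent and hence (by principality) lies in the Zariski closure of $\Gamma f$, and derives a contradiction from the fact that $v+f$ lies in the Slodowy slice $f+\fg^e$, which is transversal to the orbit -- a technique that recurs elsewhere in the paper (e.g.\ in the proof of Proposition~\ref{prop:step}). You instead use $Z$ itself: projecting it off the center gives a nonzero semisimple element $Z'\in\fs=[\fg,\fg]$ centralizing $f$, and you contradict Kostant's theorem that the centralizer of a regular nilpotent in a complex semisimple Lie algebra is concentrated in strictly negative $\ad(h)$-weights and hence consists of nilpotent elements. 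Your argument is shorter and more classical, at the price of quoting Kostant's centralizer theorem (which is a bit more than ``a direct consequence of $\sl_2$-representation theory'': one also needs that the principal $\sl_2$ in $\fs$ has no trivial summands, equivalently that all exponents are positive) and of the base-change step from $\K$-principality to regularity over $\C$; the latter is unproblematic -- density of the set of $\K$-points $\Gamma f$ in $\cN$ forces the complex orbit to be dense, hence regular -- and the paper itself implicitly uses the same kind of fact in the proof of Corollary~\ref{cor:PrinNeut}. The paper's Slodowy-slice argument, by contrast, stays entirely within the rational orbit-closure picture and avoids any appeal to the exponent/centralizer structure of regular nilpotents. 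Two small points in your write-up that deserve a word of justification but are standard: $\Lie(T)\not\subseteq\fz(\fg)$ follows because in characteristic zero $\fg^T=\fg^{\Lie(T)}$ for the connected torus $T$, so otherwise $\fl=\fg$; and your use of Corollary~\ref{cor:PrinNeut} to produce the triple is harmless (it precedes this lemma and does not depend on it), though plain Jacobson--Morozov would suffice.
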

\begin{proof}
Let $f\in \fg$ define a principal nilpotent element via the Killing form. Suppose the contrary: $f$ lies in a proper $\K$-Levi subalgebra $\fl$ of $\fg$. Let $Z\in \fg$ be a rational semi-simple element that defines $\fl$. Complete $f$ to an $\sll_2$-triple $\gamma:=(e,h,f)$ in $\fl$. Then $\ad(Z)$ acts by a scalar on every irreducible submodule of the adjoint action of $\gamma$ on $\fg$. Since  $\fl\neq \fg$, there exists an irreducible submodule $V$ on which $\ad(Z)$ acts by a negative scalar $-c$. Let $v$ be a highest weight vector of $V$ of weight $d$, and let $S:=h+c^{-1}(d+2)Z$. Then $v+f\in \fg^S_{-2}$ and thus $v+f$ is nilpotent. Since $f$ is principal, $v+f$ lies in the Zariski closure of $\Gamma f$. 
On the other hand, $v+f$ belongs to the affine space $f+\fg^e$, which is called the Slodowy slice to $\Gamma f$ at $f$, and is transversal to $\Gamma f$, contradicting the assumption that $v+f$ lies in the Zariski closure of $\Gamma f$.
\end{proof}

\begin{lemma}\label{lem:LeviConj}
Let $f\in \fg$ be nilpotent. Then all $\K$-Levi subalgebras $\fl\subseteq \fg$ such that $f \in \fl$ and $f$ is $\K$-distinguished in $\fl$ are conjugate by the centralizer of $f$.
\end{lemma}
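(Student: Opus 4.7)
The plan is to reparametrize the Levi subalgebras of interest by split tori inside the centralizer $\fg_f$, and then to invoke the standard conjugacy theorem for maximal $\K$-split tori of a connected linear algebraic group. Write any $\K$-Levi subalgebra $\fl = Z_{\fg}(\fa)$, where $\fa$ is the maximal $\K$-split subtorus of the center $Z(\fl)$; since $f \in \fl$ commutes with everything in $\fa$, we have $\fa \subseteq \fg_f$, and $\fl$ is recovered from $\fa$ via $\fl = Z_{\fg}(\fa)$.

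I would next show that, under our hypothesis, the distinguished condition is equivalent to $\fa$ being \emph{maximal} among $\K$-split tori of $\fg_f$. In one direction, any split torus $\fa' \subseteq \fg_f$ strictly containing $\fa$ automatically commutes with $\fa$, hence lies inside $\fl = Z_{\fg}(\fa)$; on the other hand $\fa' \not\subseteq Z(\fl)$ by maximality of $\fa$ in $Z(\fl)$, so $Z_{\fl}(\fa')$ is a proper $\K$-Levi of $\fl$ containing $f$, contradicting distinguished-ness. Conversely, if a proper $\K$-Levi $\fl' = Z_{\fl}(\fb) \subsetneq \fl$ contains $f$, then $\fb \subseteq \fg_f \cap \fl$ and $\fb \not\subseteq Z(\fl)$, so $\fa + \fb$ is a split torus of $\fg_f$ strictly larger than $\fa$. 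Hence $\fl \mapsto \fa$ gives a bijection between Levis of the stated form and maximal $\K$-split tori of $\fg_f$.

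Now given two such subalgebras $\fl_1, \fl_2$ with corresponding maximal split tori $\fa_1, \fa_2 \subseteq \fg_f$, I would lift them to maximal $\K$-split algebraic tori $A_1, A_2$ in the identity component $Z_{\G}(f)^{\circ}$ of the algebraic centralizer. Over the perfect field $\K$ one has a Levi decomposition $Z_{\G}(f)^{\circ} = M \ltimes R_u$ with $M$ reductive over $\K$; since any torus in $Z_{\G}(f)^{\circ}$ projects isomorphically onto a torus of $M$, maximal $\K$-split tori of $Z_{\G}(f)^{\circ}$ coincide (up to $R_u(\K)$-conjugacy) with maximal $\K$-split tori of $M$. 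The Borel--Tits conjugacy theorem for maximal $\K$-split tori of a connected reductive group then produces $g \in Z_{\G}(f)(\K)$ with $g A_1 g^{-1} = A_2$, and conjugation by $g$ sends $\fl_1 = Z_{\fg}(\fa_1)$ to $Z_{\fg}(\fa_2) = \fl_2$ while fixing $f$. The main obstacle I anticipate is the careful handling of the non-reductive group $Z_{\G}(f)^{\circ}$: one has to check that maximal $\K$-split tori can be transported into a common $\K$-Levi factor $M$ and that the conjugating element can be taken in $Z_{\G}(f)(\K)$ (and not merely in $Z_{\G}(f)(\bar\K)$), which is precisely what the Borel--Tits theorem, combined with $R_u(\K)$-conjugacy of Levi factors over a perfect field, delivers.
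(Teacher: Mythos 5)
Your argument is correct, but it takes a genuinely different route from the paper's. The paper first completes $f$ to an $\sl_2$-triple $\gamma=(e,h,f)$ and works inside the centralizer $G_\gamma$ of the whole triple: for a Levi $\fl$ containing $\gamma$ in which $f$ is $\K$-distinguished, the maximal split central torus of $L$ is shown to be a maximal split torus of the \emph{reductive} group $G_\gamma$, so the conjugacy theorem for maximal $\K$-split tori of reductive groups applies directly; the general case then follows because all $\sl_2$-triples completing $f$ are conjugate under the centralizer of $f$. You bypass the triple altogether: you show that $f$ is $\K$-distinguished in $\fl=Z_{\fg}(\fa)$ exactly when $\fa$ is a maximal $\K$-split torus of the full centralizer $Z_{\G}(f)$, and then you need conjugacy of maximal $\K$-split tori in the generally \emph{non-reductive} connected group $Z_{\G}(f)^{\circ}$, which you obtain over the characteristic-zero field $\K$ from Mostow's Levi decomposition together with Borel--Tits for the reductive quotient. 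The two reductions are the same in spirit (Levi $\mapsto$ its central split torus, then conjugacy of maximal split tori); the difference is which centralizer one lands in. The paper's choice keeps the ambient group reductive at the price of invoking the conjugacy of $\sl_2$-triples over a fixed $f$; yours avoids $\sl_2$-triples entirely and even yields the sharper equivalence (distinguished if and only if $\fa$ is maximal split in $\fg_f$), at the price of handling the non-reductive centralizer. There the one point to phrase carefully is that a torus is moved into the Levi factor $M$ by an element of $R_u(\K)$ via Mostow's conjugacy theorem for reductive subgroups in characteristic zero --- the mere fact that a torus projects isomorphically onto a torus of $M$ does not by itself provide the conjugation --- but you do invoke exactly this at the end, and both proofs ultimately produce the conjugating element in the $\K$-points of the centralizer of $f$, as the lemma requires.
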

\begin{proof}
Complete $f$ to an $\sl_2$-triple $\gamma :=(e,h,f)$ and denote its centralizer by $G_{\gamma}$. Let us show that all  $\K$-Levi subalgebras $\fl$ of $\fg$ that contain $\gamma$ and in which $f$ is distinguished are conjugate by $G_\gamma$. Let $\fl$ be such a subalgebra, $L\subseteq G$ be the corresponding Levi subgroup, and let $C$ denote the maximal split torus of the center of $L$. Then $C$ is a split torus in $G_{\gamma}$. Let us show that it is a maximal split torus. Let $T\supseteq C$ be a larger split torus in $G_{\gamma}$. Then, the centralizer of $T$ in $\fg$ is a $\K$-Levi subalgebra that lies in $\fl$ and includes $\gamma$, and thus is equal to $\fl$. Thus  $T=C$. 

Since $\fl$ is the centralizer of $T$ in $G$, $T$ is a maximal split torus of $G_{\gamma}$, and all maximal split tori of reductive groups are conjugate (see \cite[15.14]{Bor}),  we get that all the choices of $L$ are conjugate.

Since all the choices of $\gamma$ are conjugate by the centralizer of $f$, the lemma follows.
\end{proof}

\begin{definition}
\label{def:Levi-distinguished}
Let $Z\in \fg$ be a rational-semisimple element and $\fl$ denote its centralizer. Let $(h,\varphi)$ be a neutral Whittaker pair for $\fl$, such that the orbit of $\varphi$ in $\fl^*$ is $\K$-distinguished. 
We say that the Whittaker pair $(h+Z,\varphi)$ is 
 \emph{Levi-distinguished} if\begin{equation}\label{=LeviDist0}
\fg^{h+Z}_{> 1}=\fg^{h+Z}_{\geq 2}=
\fg^{Z}_{>0}\oplus \fl^{h}_{\geq 2}\text{ and }\fg^{h+Z}_{ 1}=\fl^{h}_{1}.
\end{equation}
In this case we also say that the 
Fourier coefficient $\cF_{h+Z,\varphi}$ is  \emph{Levi-distinguished}.
\end{definition}

\begin{remark}
Let $(h,\varphi)$ be a neutral Whittaker pair for $\fg$. 
If $\varphi$ is $\K$-distinguished then $\cF_{h,\varphi}$ is a Levi-distinguished Fourier coefficient. If a rational semi-simple $Z$ commutes with $h$ and with $\varphi$, and $\varphi$ is $\K$-distinguished in $\fl:=\fg^Z_0$ then $\cF_{h+TZ,\varphi}$ is a Levi-distinguished Fourier coefficient for any $T$ bigger than $m/M+1$, where $m$ is the maximal eigenvalue of $h$ and $M$ is the minimal positive eigenvalue of $Z$. See also Lemma~\ref{lem:LeviDist} for further discussion. 
\end{remark}

\begin{lem}[{\cite[Lemma 3.0.2]{GGS}}]\label{lem:Z}
For any Whittaker pair $(H,\varphi)$ there exists $Z\in \fg^{H}_0$ such that $(H-Z,\varphi)$  is a neutral Whittaker pair.
\end{lem}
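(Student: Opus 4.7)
The claim is equivalent to producing an $\sl_2$-triple $(e,h,f_\varphi)$ whose semisimple element $h$ commutes with $H$: once such $h$ is found, the element $Z := H-h$ lies in $\fg^H_0$, and the pair $(H-Z,\varphi)=(h,\varphi)$ is neutral by construction. Identifying $\varphi$ with its Killing dual $f:=f_\varphi$, the hypothesis $\ad^*(H)\varphi=-2\varphi$ becomes $[H,f]=-2f$, so in particular $f$ is nilpotent and lies in $\fg^H_{-2}$.

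\textbf{Main steps.} First, I would apply the Jacobson--Morozov theorem over $\K$ to obtain \emph{some} $\sl_2$-triple $(e_0,h_0,f)$; the existence is standard since $\fg$ is reductive and we work in characteristic zero. Second, set $X:=H-h_0$. Because both $H$ and $h_0$ act on $f$ by $-2$, the difference $X$ centralizes $f$, so $X\in\fg_f$. Moreover $\ad(H)$ preserves $\fg_f$: if $[y,f]=0$, then expanding $[H,[y,f]]=0$ using $[H,f]=-2f$ yields $[[H,y],f]=0$. By Kostant's theorem, every other $\sl_2$-triple containing $f$ is of the form $(\operatorname{Ad}(g)e_0,\operatorname{Ad}(g)h_0,f)$ for a unique $g$ in the unipotent radical $U_f$ of the centralizer of $f$, so the set of candidate neutral elements is the orbit $\operatorname{Ad}(U_f)\,h_0$, and the problem becomes locating an $H$-fixed point in this orbit. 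Third, I would use the $\sl_2$-representation theory of $(e_0,h_0,f)$: the centralizer decomposes as $\fg_f=\bigoplus_{k\geq 0}(\fg_f)^{h_0}_{-k}$, picking out the lowest-weight lines of the $\sl_2$-irreducible summands of $\fg$. Decomposing $X$ along this grading, one inductively cancels its components by conjugating $h_0$ with $\exp(\ad(u))$ for $u\in\fg_f$ of the appropriate weight, choosing $u$ to be a $\K$-rational preimage of the obstruction under a suitable surjection built from the $\sl_2$-action. Since all the spaces involved are defined over $\K$, rationality of the resulting $h$ is preserved throughout.

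\textbf{Main obstacle.} The delicate point is the inductive cancellation in the third step: at each stage one must check that the correction $u$ removes the targeted weight component of $X$ without reintroducing components that were already killed at more negative weights, which is controlled by the fact that $\exp(\ad(u))$ only adds terms of strictly more negative $h_0$-weight than $u$ itself, so the process terminates because $\fg_f$ is finite-dimensional. A tidier alternative, which I would mention, is to observe that the torus generated by $H$ acts algebraically on the $U_f$-torsor of admissible neutral elements, so by the standard fact that a diagonalizable group acting on a unipotent-group orbit has a fixed point, the desired $h$ exists directly; the $\K$-rationality then follows from Galois descent applied to the uniqueness (up to $G_f$-conjugacy) of the fixed point.
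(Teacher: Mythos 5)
Your overall strategy --- produce a neutral element $h$ for $f=f_\varphi$ with $[H,h]=0$ and then set $Z:=H-h$ --- is the right one, and it is the same reduction the paper makes; but note that the paper does not reprove the lemma at all: it cites \cite[Lemma 3.0.2]{GGS} and only remarks that the argument there uses nothing beyond Jacobson--Morozov, hence works over any field of characteristic zero. That argument is much shorter than your route: take any Jacobson--Morozov triple $(e_0,h_0,f)$ over $\K$, decompose along the $\ad(H)$-eigenspace grading, and let $h$ be the degree-zero component of $h_0$. Comparing degrees in $[h_0,f]=-2f$ gives $[h,f]=-2f$, and since $h_0=[e_0,f]$ the same comparison gives $h=[(e_0)_2,f]\in[\fg,f]$, so the Morozov completion lemma yields $e\in\fg^H_2$ with $(e,h,f)$ an $\sl_2$-triple; everything is visibly defined over $\K$. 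Your path via Kostant's simple-transitivity theorem and a fixed point of the torus attached to $H$ acting on the $U_f$-torsor of neutral elements is genuinely different and, in its ``tidier alternative'' form, sound in substance: the grading automorphisms coming from $H$ do preserve the set of neutral elements for $f$ and the unipotent radical of the centralizer, and linear reductivity of tori in characteristic zero gives a fixed point. But it is considerably heavier machinery for the same statement.

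As written, however, your main argument has a genuine gap, and the rationality remark in the alternative is not valid. The inductive cancellation is aimed at the wrong target: you propose to decompose $X=H-h_0$ along the $h_0$-weight grading of $\fg_f$ and ``inductively cancel its components'', but cancelling \emph{all} components would force the conjugated neutral element to equal $H$, which is false in general (already for $\fg=\gl_2$, $f=e_{21}$, $H=\diag(1,-1)+\lambda\Id$ with $\lambda\neq 0$, the weight-zero part $\lambda\Id$ of $X$ can never be removed, since the neutral elements for $f$ are exactly $\diag(1,-1)+2tf$). The correct target is to kill only the components of $H-h$ of nonzero weight for the \emph{final} $h$ (equivalently, to achieve $[H,h]=0$), and once this is said precisely you must either handle the bookkeeping of the grading changing after every conjugation --- which your sketch gestures at but does not resolve, since the ``obstruction'' and the ``suitable surjection built from the $\sl_2$-action'' are never identified --- or you are led back to the one-step projection argument above. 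Separately, ``$\K$-rationality follows from Galois descent applied to the uniqueness up to $G_f$-conjugacy of the fixed point'' is not a correct deduction: uniqueness up to conjugacy over $\overline{\K}$ does not produce a $\K$-point. No descent is needed: the torsor has the $\K$-point $h_0$, and the torus, the unipotent group and the action are all defined over $\K$, so the fixed-point (cocycle-averaging) argument can be carried out over $\K$ directly.
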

\begin{remark}
In \cite{GGS} the lemma is proven over a local field, but the proof only used the Jacobson--Morozov theorem, that holds over arbitrary fields of characteristic zero.
\end{remark}

\begin{lemma}\label{lem:WhitPL}
For any Whittaker pair $(H,\varphi)$, the following are equivalent:
\begin{enumerate}
\item $(H,\varphi)$ is standard pair
\item $(H,\varphi)$ is a Levi-distinguished Fourier coefficient, and $\varphi$ is a PL nilpotent.
\end{enumerate}
\end{lemma}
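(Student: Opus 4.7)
The plan is to prove the two implications separately, both leveraging Corollary~\ref{cor:PrinNeut} applied inside a suitable $\K$-Levi $\fl$, together with the identity
\[
\fn_{h+Z,\varphi} \;=\; \fg^Z_{>0}\oplus \fn^\fl_{h,\varphi},
\]
which follows directly from the Levi-distinguished conditions~\eqref{=LeviDist0}. The mechanism underlying the equivalence is that the PL assumption on $\varphi$ combined with $\K$-distinguishedness in $\fl$ forces $\varphi$ to be principal in $\fl$, so that $(h,\varphi)$ becomes a neutral standard pair in $\fl$ via Corollary~\ref{cor:PrinNeut}.

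For $(2)\Rightarrow(1)$: assume $(H,\varphi)=(h+Z,\varphi)$ is Levi-distinguished with $\fl=\fg^Z_0$, and $\varphi$ is PL. Being PL, $\varphi$ is principal (hence $\K$-distinguished by Lemma~\ref{lem:PrinDist}) in some $\K$-Levi $\fl'$. Since it is also $\K$-distinguished in $\fl$, Lemma~\ref{lem:LeviConj} provides a conjugation by an element of the centralizer of $f_\varphi$ taking $\fl$ to $\fl'$; as this element fixes $f_\varphi$, principal-ness descends to $\fl$. Corollary~\ref{cor:PrinNeut} applied inside $\fl$ then shows that $(h,\varphi)$ is neutral standard in $\fl$, yielding $\fl^h_1=0$ and $\fn^\fl_{h,\varphi}=\fl^h_{>0}$. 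Substituting into \eqref{=LeviDist0},
\[
\fn_{H,\varphi}=\fg^Z_{>0}\oplus\fl^h_{>0},
\]
which is the nilradical of the parabolic of $\fg$ obtained by pulling back the minimal parabolic $\fl^h_{\geq 0}$ of $\fl$ along the projection $\fg^Z_{\geq 0}\twoheadrightarrow \fl$. This parabolic is itself minimal in $\fg$, because its anisotropic reductive quotient equals the centralizer of a maximal split torus of $\fg$ (which sits inside $\fl$, as it commutes with $Z$), and this is the common anisotropic Levi of minimal parabolics of $\fg$. Hence $(H,\varphi)$ is standard.

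For $(1)\Rightarrow(2)$: assume $(H,\varphi)$ is standard; by the corollary preceding this lemma, $\varphi$ is PL. Use Lemma~\ref{lem:Z} to decompose $H=h+Z$ with $(h,\varphi)$ neutral and $[h,Z]=0$. The Whittaker-pair conditions on both $H$ and $h$ force $[Z,f_\varphi]=0$, so $f_\varphi\in\fl:=\fg^Z_0$, and Jacobson--Morozov inside the reductive $\fl$ supplies a completing $e\in\fl$, making $(h,\varphi)$ a neutral pair for $\fl$. A direct computation using $\ad H|_\fl=\ad h|_\fl$ and $\fg_\varphi\cap\fl=\fl_\varphi$ yields $\fn_{H,\varphi}\cap\fl=\fn^\fl_{h,\varphi}$; by choosing the Levi decomposition of the minimal parabolic $\fp$ with $\fn_\fp=\fn_{H,\varphi}$ so that its anisotropic Levi contains a common maximal split torus of $H$, $h$ and $Z$ (which exists because these are commuting rational semisimple elements), the intersection $\fp\cap\fl$ becomes a minimal parabolic of $\fl$. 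Thus $(h,\varphi)$ is standard in $\fl$, Corollary~\ref{cor:PrinNeut} gives $\varphi$ principal in $\fl$, and Lemma~\ref{lem:PrinDist} upgrades this to $\K$-distinguishedness in $\fl$. The eigenvalue conditions \eqref{=LeviDist0} then follow from $\fl^h_1=0$ and the observation that standardness of $(H,\varphi)$ rules out $\ad H$-weights in $(0,1)\cup(1,2)$ off the Levi: any such weight would have to come from a nonzero $Z$-weight combined with an integer $h$-weight, and this is incompatible with $\fn_{H,\varphi}$ being the nilradical of a minimal parabolic.

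The main obstacle is the last step of $(1)\Rightarrow(2)$: placing the maximal split torus of $\fg$ containing $Z$ simultaneously inside both the Levi $\fl$ and the anisotropic Levi of $\fp$, and then tracking how the $Z$-grading refines the $H$-grading finely enough to rule out stray weights in $(0,2)\setminus\{1\}$ off $\fl$. Once this torus placement is made and the commutation $[H,Z]=0$ is used to align gradings, the combinatorics of eigenvalues fall into line and \eqref{=LeviDist0} is verified.
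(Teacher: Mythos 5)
Your direction (2)$\Rightarrow$(1) is essentially the paper's own argument: Lemmas~\ref{lem:PrinDist} and \ref{lem:LeviConj} upgrade ``PL and $\K$-distinguished in $\fl$'' to ``principal in $\fl$'', Corollary~\ref{cor:PrinNeut} makes $(h,\varphi)$ standard in $\fl$, and the sum with $\fg^Z_{>0}$ is the nilradical of a minimal parabolic of $\fg$; your extra justification of minimality via the centralizer of a maximal split torus is fine.

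The gap is in the last step of (1)$\Rightarrow$(2), exactly the step you flag as the main obstacle. After showing that $\varphi$ is principal in $\fl=\fg^Z_0$, you assert that \eqref{=LeviDist0} holds for the decomposition $H=h+Z$ furnished by Lemma~\ref{lem:Z}, arguing that standardness excludes off-Levi $\ad(H)$-weights in $(0,1)\cup(1,2)$. That criterion is not sufficient (\eqref{=LeviDist0} also demands that every vector of positive $Z$-eigenvalue have $H$-eigenvalue at least $2$, and that no off-Levi vector have $H$-eigenvalue exactly $1$), and in fact the conclusion is false. Take $\fg=\sll_3$ split, $f_\varphi=e_{21}$, $H=\diag(5/3,-1/3,-4/3)$. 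Then $\fn_{H,\varphi}=\fg^H_{>1}\oplus(\fg^H_1\cap\fg_\varphi)=\langle e_{12},e_{13}\rangle\oplus\langle e_{23}\rangle$ is the full upper-triangular nilradical, so $(H,\varphi)$ is standard. The only decomposition compatible with Lemma~\ref{lem:Z} and Definition~\ref{def:Levi-distinguished} is $h=\diag(1,-1,0)$, $Z=\diag(2/3,2/3,-4/3)$, and there \eqref{=LeviDist0} fails: $e_{23}$ has $h$-eigenvalue $-1$ and $Z$-eigenvalue $2$, hence $H$-eigenvalue exactly $1$, so $\fg^Z_{>0}\not\subseteq\fg^{H}_{\geq 2}$ and $\fg^H_1=\langle e_{23}\rangle\neq 0=\fl^h_1$. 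Note that this off-Levi vector is precisely what makes the pair standard (it enters $\fn_{H,\varphi}$ through $\fg^H_1\cap\fg_\varphi$), contrary to your claimed incompatibility.

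Consequently the pair $(H,\varphi)$ itself need not satisfy Definition~\ref{def:Levi-distinguished}; the statement has to be read, as its wording suggests, at the level of the Fourier coefficient. The paper handles this by one further move that your proof omits: replace $Z$ by $tZ$ with $t$ large enough, so that $(h+tZ,\varphi)$ does satisfy \eqref{=LeviDist0} while defining the same unipotent group and character (in the example above $\fn_{h+tZ,\varphi}$ is again the Borel nilradical), hence the same coefficient $\cF_{H,\varphi}$. With this rescaling inserted in place of your claimed verification of \eqref{=LeviDist0} for the original $Z$, the rest of your (1)$\Rightarrow$(2) argument (neutrality of $(h,\varphi)$ in $\fl$, $\fn_{H,\varphi}\cap\fl=\fn^{\fl}_{h,\varphi}$, standardness in $\fl$, principality and $\K$-distinguishedness) agrees with the paper.
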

\begin{proof}
First let $(H,\varphi)$ be a standard pair. Then by Lemma \ref{lem:Z}, $H$ can be decomposed as $H=h+Z$ where $(h,\varphi)$ is a neutral pair and $Z$ commutes with $h$ and with $\varphi$. Let $\fl$ and $L$ denote the centralizers of $Z$ in $\fg$ and $G$, and  $N:=N_{H,\varphi}$. Then $N$ is the unipotent radical of a minimal parabolic subgroup of $G$, and $L$ is a Levi subgroup of $G$. Thus, $N\cap L$ is the unipotent radical of a minimal parabolic subgroup of $L$. The Lie algebra of $N\cap L$ is $\fn_{H,\varphi}\cap \fg^Z_0=\fg^{h}_{\geq 1}\cap \fg^Z_0$. Thus, $\Exp(\fg^h_{\leq -1}\cap \fg^Z_0)$ is the unipotent radical of a minimal parabolic subgroup of $L$. Since $\varphi$ is given by Killing form pairing with $f\in \fg^h_{\leq -1}\cap \fg^Z_0$, we get by Corollary \ref{cor:PrinNeut} that $\varphi$ is principal in $\fl$.  Replacing $Z$ by $tZ$ with $t$ large enough, we obtain that $(H,\varphi)$ is a Levi-distinguished pair.

Now, assume that $\varphi$ is a PL nilpotent, and let  $\cF_{h+Z,\varphi}$ be a Levi-distinguished pair. Let $\fl=\fg^Z_0$ be the corresponding Levi, and let $f=f_{\varphi}$ be the element of $\fg$ that defines $\varphi$. Since $f$ is distinguished in $\fl$, and principal in some Levi, Lemmas \ref{lem:PrinDist} and \ref{lem:LeviConj} imply that $f$ is principal in $\fl$. Thus, $\fn_{H,\varphi}\cap \fl$ is the nilpotent radical of the Lie algebra of a minimal parabolic subgroup of $L$ and thus $\fn_{H,\varphi}=\fn_{H,\varphi}\cap \fl \oplus \fg^Z_{>0}$ the nilpotent radical of the Lie algebra of a minimal parabolic subgroup of $G$. Thus $\cF_{H,\varphi}$ is standard Whittaker pair.
\end{proof}

\begin{lemma}\label{lem:SameDim}
 Let $(H,\varphi)$ and $(S,\varphi)$ be Levi-distinguished Whittaker pairs with the same $\varphi$.  Then $\dim \fn_{S,\varphi}=\dim \fn_{H,\varphi}$
\end{lemma}
\begin{proof}
By the definition of Levi-distinguished Whittaker pair, there exists a decomposition $H=h+Z$ 
such that $\varphi$ is given by the Killing form pairing with a distinguished element $f\in \fl=\fg^Z_0$ and \eqref{=LeviDist0} holds. Let $S=h'+Z'$ be a decomposition for $S$ satisfying the same properties, and let $\fl':=\fg^{Z'}_0$. By Lemma \ref{lem:LeviConj}, $\fl$ and $\fl'$ are conjugate by the centralizer of $f$, and thus we can assume that $\fl=\fl'$. By the Jacobson--Morozov theorem,  $h$ and $h'$ are conjugate by the centralizer of $f$ in $\fl$. Now,
\begin{equation}
    \dim \fn_{H,\varphi}=\dim \fl^h_{\geq 2}+\dim \fg^Z_{>0}=\dim\fl^h_{\geq 2}+(\dim \fg-\dim \fl)/2= \dim \fn_{S,\varphi}\,.
    \qedhere
\end{equation}
\end{proof}
In Lemma \ref{lem:dim} \ref{it:DimLeviDist} below we show that Levi-distinguished pairs have maximal dimension of $\fn_{S,\varphi}$ among all Whittaker pairs with nilpotent element $\varphi$.

\subsection{Order on nilpotent orbits and Whittaker support}
 
\begin{definition}\label{def:order}
    We define a partial order on nilpotent orbits in $\fg^* = \fg^*(\K)$ to be the transitive closure of the following relation $R$: $(\cO,\cO')\in R$ if $\cO\neq \cO'$ and 
there exist $\varphi\in \cO,$
 rational semi-simple $H,Z\in \fg$, and $\varphi'\in (\fg^*)^Z_{>0 } \cap (\fg^*)^H_{-2}$ such that $\varphi\in (\fg^*)^Z_0 \cap (\fg^*)^H_{-2 }$, $[H,Z]=0,$ and 
    $\varphi+\varphi'\in  \cO'$.
\end{definition}
In the notation of the introduction, the conditions in the definition read $\varphi+\varphi'\cge_H \varphi$, the parabolic subalgebra in \eqref{=IntOrder} being $\fg^Z_{\geq 0}$. 
In Appendix~\ref{sec:Geo}, we study these rational orbits in more detail. In particular, in Corollary~\ref{cor:order} we prove that this is indeed a partial order, {\it i.e.} that $R$ is anti-symmetric.
We will thus denote $\cO\leq \cO'$ (or $\cO'\geq \cO$) if $(\cO,\cO')$ lies in the transitive closure of $R$, and $\cO<\cO'$ if $\cO\leq \cO'$ and $\cO\neq \cO'$. By Corollary \ref{cor:complexDim} below this implies an inequality on the dimensions of complexifications: $\dim \cO_{\C}<\dim \cO'_{\C}$.

\begin{lemma}
\label{lem:orbit-closure}
If $\cO'$ is bigger than $\cO$, {\it i.e.} if $(\cO,\cO')\in R$, then for any place $\nu$ of $\K$, the closure of $\cO'$ in $\fg(\K_{\nu})$ (in the local topology) contains $\cO$.
\end{lemma}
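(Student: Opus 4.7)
The plan is to prove the claim when $(\cO,\cO')\in R$ is a single step of the relation; the full statement then follows because the property $\cO\subseteq\overline{\cO'}$ is closed under iteration, since $\overline{\overline{X}}=\overline{X}$ for subsets $X$ of a topological space. So fix $\varphi\in\cO$, a rational semi-simple $Z\in\fg$, and $\varphi'\in(\fg^*)^Z_{>0}$ with $\varphi\in(\fg^*)^Z_0$ and $\varphi+\varphi'\in\cO'$. The idea is to exhibit a one-parameter family of elements of $\cO'$ that converges to $\varphi$ in the $\K_\nu$-adic topology, and then to invoke $\Gamma$-equivariance of the closure.

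First I would integrate the grading into a cocharacter. Because $Z$ is rational semi-simple, it lies in the $\mathbb{Q}$-span of the cocharacter lattice of some $\K$-split torus of $\mathbf{G}$, so after multiplying by a positive integer we may assume $Z$ corresponds to a $\K$-cocharacter $\lambda\colon\mathbb{G}_m\to\mathbf{G}$ whose differential is $Z$; then $\Ad(\lambda(s))$ acts on $\fg^Z_\mu$ as $s^\mu$ and dually $\Ad^*(\lambda(s))$ acts on $(\fg^*)^Z_\mu$ as $s^\mu$. Decomposing $\varphi'=\sum_{\mu>0}\varphi'_\mu$ into $Z$-eigencomponents, I compute
\[
\Ad^*(\lambda(s))(\varphi+\varphi') \;=\; \varphi \;+\; \sum_{\mu>0}s^\mu\,\varphi'_\mu
\]
for every $s\in\K^\times$, and since $\lambda(s)\in\mathbf{G}(\K)=\Gamma$ the left-hand side lies in $\cO'$.

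Next I would take a limit inside $\fg^*(\K_\nu)$. Since $\K$ is dense in $\K_\nu$, I can choose a sequence $(s_n)\subset\K^\times$ with $s_n\to 0$ in the $\nu$-adic topology; then $s_n^\mu\to 0$ for every $\mu>0$, so $\Ad^*(\lambda(s_n))(\varphi+\varphi')\to\varphi$ in $\fg^*(\K_\nu)$. Hence $\varphi$ lies in the local closure $\overline{\cO'}$. Finally, because $\mathbf{G}(\K_\nu)$ acts continuously on $\fg^*(\K_\nu)$ and $\Gamma\subset\mathbf{G}(\K_\nu)$, the set $\overline{\cO'}$ is $\Gamma$-invariant, so the entire orbit $\cO=\Gamma\varphi$ is contained in $\overline{\cO'}$, as required.

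The main point to get right is the integration step: one must verify that, after integer rescaling, a rational semi-simple $Z\in\fg(\K)$ really does come from a genuine $\K$-rational cocharacter of $\mathbf{G}$, rather than only of $\mathbf{G}^{\mathrm{ad}}$. This follows from the standard fact (recorded in the paper immediately after the definition of $\K$-Levi subalgebras) that every rational semi-simple element lies in the Lie algebra of some $\K$-split torus of $\mathbf{G}$, whose cocharacter lattice spans its Lie algebra over $\mathbb{Q}$.
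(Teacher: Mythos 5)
Your proof is correct, and it takes a genuinely different route from the paper's. The paper conjugates $\varphi+\varphi'$ by elements $\exp(-\eps_i Z)$ of $\G(\K_\nu)$, i.e.\ it works inside the local group and in effect shows that $\varphi$ lies in the closure of the $\G(\K_\nu)$-orbit of $\varphi+\varphi'$ (this is how its opening reduction reads the phrase ``closure of $\cO'$''). You instead integrate the $\ad(Z)$-grading to a $\K$-rational cocharacter $\lambda$ and conjugate by $\lambda(s_n)$ with $s_n\in\K^\times$ tending to $0$ $\nu$-adically; this keeps every approximating element inside the $\Gamma$-orbit $\cO'$ itself, so you obtain the literal (and stronger) statement $\cO\subseteq\overline{\cO'}$, and you avoid any convergence issue with the exponential at finite places, where $\exp(-\eps Z)$ is only defined for small $\eps$ and then acts on each $Z$-eigenspace by a $1$-unit, hence does not shrink $\varphi'$ $\nu$-adically -- so the cocharacter is really the right substitute for the flow $\exp(-tZ)$. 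The one point you should tighten is the justification of the integration step: the paper's remark states that Lie algebras of split tori are spanned by rational semisimple elements, not the converse, and the converse as you state it can fail when $Z$ has a component in the Lie algebra of an anisotropic central torus. Since only $\ad(Z)$ matters, this is harmless: after clearing denominators the $\ad(Z)$-eigenspace decomposition defines a $\K$-homomorphism $\mathbb{G}_m\to\Aut(\fg)^{\circ}$ acting trivially on the centre of $\fg$, i.e.\ a $\K$-cocharacter of the adjoint group, and a suitable positive power of it lifts along the central isogeny to a $\K$-cocharacter of $\G^{\mathrm{der}}\subseteq\G$ inducing the grading up to a positive integer rescaling; with that patch your argument is complete.
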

\begin{proof}
It is enough to show that for any $Z\in \fg$, $\varphi\in \fg^Z_0$ and $\psi \in \fg^Z_{>0}$, $\varphi$ lies in the closure of ${\bf G}(\K_{\nu})(\varphi+\psi)$. Let $\eps_i\in \K_{\nu}$ be a sequence converging to zero and let $g_{i}:=\exp(-\eps_i Z)$. Then $g_i$ centralize $\varphi$, while $g_i\psi\to 0$. Thus $g_i(\varphi+\psi)\to \varphi$.
\end{proof}

\begin{remark}
For $\bf G= \GL_n,$ our order coincides with the closure order on complex orbits (see \cite[Proposition 7.0.5]{GGS:support}).
However, in general this is not the case. For example, one can show that the maximal elements with respect to this order are the $\K$-distinguished orbits.
\end{remark}

\begin{defn}\label{def:WO}
  For an automorphic function $\eta$, we define $\mathrm{WO}(\eta)$ to be the set of nilpotent orbits $\cO$ in $\fg^*$ under the coadjoint action of $\G(\K)$ such that $\cF_{h,\varphi}[\eta]\neq 0$ for some neutral Whittaker pair $(h,\varphi)$ with $\varphi\in \cO$. Using the partial order of Definition~\ref{def:order}, we define the \emph{Whittaker support} $\WS(\eta)$ to be the set of maximal elements in $\mathrm{WO}(\eta)$.
\end{defn}

\begin{remark}
The definition of $\WS$ of automorphic representations given in \cite[\S 8]{GGS:support} uses the closure (Bruhat) order on complex orbits, which is coarser then the one we use here. Thus, the $\WS$ defined in Definition \ref{def:WO} includes the $\WS$ used in \cite[\S 8]{GGS:support} but is frequently not equal to it.
\end{remark}

\section{Relating different Fourier coefficients}\label{sec:GenLemmas}

In this section we will introduce some standard tools used to relate different types of Fourier coefficients.
In the subsequent subsection we first relate Fourier coefficients for different isotropic subspaces for a single Whittaker pair.
Then we relate Fourier coefficients along deformations of Whittaker pairs, and lastly we explain the relationship between the conjugation of a Whittaker pair and the translation of the argument of a Fourier coefficient.

The statements in this section have partial local analogues in \cite{GGS,GGS:support}. However, the statements we give here are global and more explicit.

\subsection{Relating different isotropic subspaces}

We will now see how $\cF_{S,\varphi,\varphi'}$ and $\cF_{S,\varphi,\varphi'}^R$ can be expressed through each other.
\begin{lemma}[cf. {\cite[Lemma 6.0.2]{GGS}} for a slightly weaker statement]\label{lem:StvN}
  Let $\eta \in C^{\infty}(\Gamma \backslash G)$, let $(S,\varphi,\varphi')$ be a Whittaker triple, $\fn_{S,\varphi}$ be as in \eqref{=Nsphi}, and $\fu := \mathfrak{g}^S_{\geq 1}$.
  
  Let $\fn_{S,\varphi} \subseteq \fri \subseteq \fr $ be isotropic subspaces of $\fu$, and let $\fri^\perp\supseteq \fr^{\perp}$ be their orthogonal complements with respect to $\omega_\varphi|_\fu$.
  Let also $I: = \Exp(\fri(\mathbb{A}))$, $R = \Exp(\fr(\mathbb{A}))$, $I^\bot:=\Exp(\fri^\bot(\mathbb{A}))$ and $R^\bot:=\Exp(\fr^\bot(\mathbb{A}))$.
  Then,
\begin{align}\label{eq:FL-as-F}
    \cF_{S,\varphi,\varphi'}^R[\eta](g) &= \intl_{[R/I]} \cF^I_{S,\varphi,\varphi'}[\eta](ug) \, du \, \\
    \intertext{and}
    \label{eq:F-as-FL}
    \cF^I_{S,\varphi,\varphi'}[\eta](g) &=\sum_{\gamma \in \Exp(\fri^{\perp}/\fr^\bot)}\cF_{S,\varphi,\varphi'}^{R}[\eta](\gamma g).
\end{align}
\end{lemma}

We will mostly use this lemma in the case $\fri=\fn_{S,\varphi}$ for which $\fri^{\bot}=\fu$.

\begin{proof}
We assume that $\varphi$ is non-zero since otherwise $R=I=N_{S,\varphi}$. We have that $I \subseteq R$ with $R/I$ abelian which means that \eqref{eq:FL-as-F} follows immediately from the definitions of $\cF^I_{S,\varphi,\varphi'}$ and $\cF_{S,\varphi,\varphi'}^R$. 
For \eqref{eq:F-as-FL} observe that the function 
$(\chi_{\varphi}^{R})^{-1}\cdot \cF^I_{S,\varphi,\varphi'}[\eta]$ on $R$ is left-invariant under the action of $I\cdot(R\cap \Gamma)$. In other words, we can identify it with a function on
\begin{equation}\label{=LN}
(I\cdot(R\cap \Gamma)) \backslash R   \cong \Exp(\fr/\fri\bigr) \backslash \Exp((\fr/\fri)(\A)) =:[R/I],
\end{equation}
where the  equality follows from the fact that $R/I$ is abelian. Therefore, we have a Fourier series expansion
\begin{equation}
 \cF^I_{S,\varphi,\varphi'}[\eta](u)=\sum_{\psi\in [R/I]^{\wedge}}c_{\psi,\chi_{\varphi+\varphi'}^{R}}(\eta)\psi(u)\chi_{\varphi}^{R}(u),
\end{equation}
where $[R/I]^{\wedge}$ denotes the Pontryagin dual group of $[R/I]$ and 
\begin{equation}\label{eq:c_Whittaker-Fourier_coefficient}
 c_{\psi,\chi_{\varphi}^{R}}(\eta)=\intl_{[R]}\psi(u)^{-1}\chi_{\varphi+\varphi'}^{R}(u)^{-1}\eta(u)du.
 \end{equation}
In particular, denoting by $\Id\in G$ the identity element we obtain
\begin{equation}\label{=c_e}
 \cF^I_{S,\varphi,\varphi'}[\eta](\Id)=\sum_{\psi\in [R/I]^{\wedge}}c_{\psi,\chi_{\varphi+\varphi'}^{R}}(\eta).
\end{equation}

Now observe that the map $X \mapsto \omega_{\varphi}(X,\cdot)=\varphi\circ \ad(X)$ induces an isomorphism between $\fri^{\bot}/\fr^\bot$ and the dual space $(\fr/\fri)^*$. Hence, according to equations (\ref{eq:chi_isomorphism}) and \eqref{=LN}, we can use the character $\chi$ to define a group isomorphism
\begin{equation}\label{=DualL}
 \begin{array}{rcl}
  (I^{\bot}\cap \Gamma)/(R^\bot\cap \Gamma) & \longrightarrow & [R/I]^{\wedge} \\
      u & \mapsto & \psi_{u},
 \end{array}
\end{equation}
where
\begin{align}
 \psi_{u}(r)=\chi(\varphi([X,Y])), \qquad  \mbox{$u=\exp X$}\quad \mbox{and}\quad \mbox{$r=\exp Y$.}
\end{align}
Hence, for all $u\in I^{\bot}\cap \Gamma$ and $r\in R$ we have
\begin{eqnarray*}
 \psi_{u}(r)\chi_{\varphi+\varphi'}^{R}(r)  =  \chi(\varphi([X,Y])+\varphi'([X,Y]))\chi(\varphi(Y)+\varphi'(Y))                   =  \chi((\varphi+\varphi')(Y+[X,Y]))\\
                   =  \chi((\varphi+\varphi')(e^{\ad(X)}(Y)))
                   =  \chi_{\varphi+\varphi'}((\Ad(u)Y))
                  =  \chi_{\varphi+\varphi'}^{R}(uru^{-1}).
\end{eqnarray*}
Here we are taking again $u=\exp X$, $r=\exp Y$ and the middle equality follows from the vanishing of $\varphi$  on $\fg^{S}_{>2}$. But now, from formula \eqref{eq:c_Whittaker-Fourier_coefficient} and the fact that $\eta$ is automorphic, we have
\begin{eqnarray}
  c_{\psi_{u},\chi_{\varphi+\varphi'}^{R}}(\eta) & = & \intl_{[R]}\psi_{u}(r)^{-1}\chi_{\varphi+\varphi'}^{R}(r)^{-1}\eta(r)dr
              =  \intl_{[R]}\chi_{\varphi+\varphi'}^{R}(uru^{-1})^{-1}\eta(r)dr.\nonumber\\
             & = & \intl_{[R]}\chi_{\varphi+\varphi'}^{R}(r)^{-1}\eta(u^{-1}ru)dr               =  \cF_{S,\varphi,\varphi'}^{R}[\eta](u),
\end{eqnarray}
for all $u\in U\cap \Gamma$.  Combining this with \eqref{=c_e} and \eqref{=DualL} we obtain
\begin{equation}
 \cF^I_{S,\varphi,\varphi'}[\eta](\Id)=\sum_{u\in(I^{\bot}\cap \Gamma)/(R^\bot\cap \Gamma)}\cF_{S,\varphi,\varphi'}^{R}[\eta](u).
\end{equation}
Applying this to $\eta$ and its right shifts we obtain \eqref{eq:F-as-FL}.
\end{proof}

\begin{cor}\label{cor:RootExchange}
Let $\eta \in C^{\infty}(\Gamma \backslash G)$, let $(S,\varphi,\varphi')$ be a Whittaker triple, and $\fn_{S,\varphi}$, and $\fu$ be as above.
Let $\fr,\fr'\subseteq \fu$ be two isotropic subspaces that include $\lie n_{S,\varphi}$. Assume $\dim \fr =\dim \fr'$ and $\fr\cap (\fr')^{\bot}\subseteq \fr'$. Then 
\begin{equation}
\cF_{S,\varphi,\varphi'}^R[\eta](g) = \intl_{R/(R\cap R')} \cF^{R'}_{S,\varphi,\varphi'}[\eta](ug) \, du\,.
\end{equation}
Note that this is a non-compact, adelic, integral.
\end{cor}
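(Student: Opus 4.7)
The plan is to combine the two formulas in Lemma~\ref{lem:StvN} by applying it sequentially. Using \eqref{eq:FL-as-F} with $\fr$ and \eqref{eq:F-as-FL} with $\fr'$, one obtains, for $\eta \in C^{\infty}(\Gamma\backslash G)$,
\[
\cF^R_{S,\varphi,\varphi'}[\eta](g) \;=\; \intl_{[R/N_{S,\varphi}]}\,\suml{\gamma\in \exp(\fu/(\fr')^{\bot})(\K)}\cF^{R'}_{S,\varphi,\varphi'}[\eta](\gamma u g)\, du.
\]
The remainder of the argument consists of recognising this compact-integral-plus-discrete-sum as the non-compact adelic integral $\intl_{R/(R\cap R')}\cF^{R'}_{S,\varphi,\varphi'}[\eta](ug)\, du$.

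To carry out the reorganisation, I would first exploit that $\fu/\fn_{S,\varphi}$ is an abelian Lie algebra equipped with the non-degenerate symplectic form induced by $\omega_\varphi$, so all multiplications in $U/N_{S,\varphi}$ commute and no Baker--Campbell--Hausdorff corrections appear in the characters that arise. Under $\omega_\varphi$, the $\K$-vector spaces $\fu/(\fr')^{\bot}$ and $\fr'/\fn_{S,\varphi}$ are in natural duality via the character $\chi$, so the sum over $\gamma$ becomes a Pontryagin sum over characters of the compact abelian group $[R'/N_{S,\varphi}]$. Exploiting the left-equivariance $\cF^{R'}(r'h) = \chi^{R'}_{\varphi+\varphi'}(r')\cF^{R'}(h)$ for $r'\in R'$ and a Fubini-type swap against the integration over $[R/N_{S,\varphi}]$, then applying the standard unfolding identity
\[
\intl_{V(\A)} f(v)\, dv \;=\; \intl_{V(\A)/V(\K)}\,\suml{\gamma\in V(\K)} f(v\gamma)\, dv
\]
to the $\K$-vector space $V := \fr/(\fr\cap \fr')$, one arrives at the desired non-compact adelic integral.

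I expect the main obstacle to be the bookkeeping at this final step: one must identify, under the duality induced by $\omega_\varphi$, exactly those $\K$-cosets in $\fu/(\fr')^{\bot}$ that survive the character orthogonality over $[R/N_{S,\varphi}]$ with the $\K$-cosets of $\fr/(\fr\cap \fr')$, and match the resulting measures. The dimension hypothesis $\dim\fr = \dim \fr'$ plays a crucial role here, since it is what forces the symmetric duality roles of $\fr$ and $\fr'$ under $\omega_\varphi$ and makes the two parameter spaces line up. In essence, this last step realises the adelic intertwiner between two isotropic polarisations of the Heisenberg structure on $\fu/\fn_{S,\varphi}$, in the spirit of the Stone--von~Neumann theorem alluded to by the tag of Lemma~\ref{lem:StvN}.
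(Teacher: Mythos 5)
Your overall route is the paper's: the corollary is indeed meant to follow by combining \eqref{eq:FL-as-F} (applied to $\fr$) with \eqref{eq:F-as-FL} (applied to $\fr'$) and then unfolding the rational sum against the compact integral, exactly as you set up. The problem is that the one step carrying all the content is left as a sketch, and the reason you give for why it closes is not correct. Folding the sum over $\gamma\in\exp\bigl(\fu/(\fr')^{\bot}\bigr)(\K)$ into the integral over $[R/N_{S,\varphi}]$ so as to produce $\int_{R/(R\cap R')}$ requires that the projection $\fr\to\fu/(\fr')^{\bot}$ have kernel exactly $\fr\cap\fr'$, i.e.\ that $\fr\cap(\fr')^{\bot}=\fr\cap\fr'$, equivalently that $\omega_\varphi$ induce a nondegenerate pairing between $\fr/(\fr\cap\fr')$ and $\fr'/(\fr\cap\fr')$; one must also argue that the $\gamma$'s not coming from $\fr(\K)$ drop out. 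The hypothesis $\dim\fr=\dim\fr'$ does \emph{not} force this. Concretely, let $\fu/\fn_{S,\varphi}$ be six-dimensional symplectic with standard basis $e_1,e_2,e_3,f_1,f_2,f_3$ (this occurs, e.g., for the Heisenberg parabolic of $\Sp_8$ with $\varphi$ supported on the one-dimensional centre, where $\fn_{S,\varphi}=\fg^S_2$), and take $\fr=\fn_{S,\varphi}\oplus\langle e_1,e_2\rangle$, $\fr'=\fn_{S,\varphi}\oplus\langle e_1,e_3\rangle$: both are isotropic of the same dimension, but $e_2$ pairs trivially with all of $\fr'$. Then for any $s\in\K$ the rational element $\gamma_s=\exp(se_2)$ normalizes $R'$ and fixes $\chi^{R'}_{\varphi+\varphi'}$ under conjugation (the relevant commutators land in $\fg^S_{\geq 2}$ and pair trivially with $\varphi+\varphi'$), so $\cF^{R'}_{S,\varphi,\varphi'}[\eta](\gamma_s h)=\cF^{R'}_{S,\varphi,\varphi'}[\eta](h)$; hence $t\mapsto\cF^{R'}_{S,\varphi,\varphi'}[\eta](\exp(te_2)g)$ is $\K$-periodic and the proposed adelic integral over $R/(R\cap R')$ (the $e_2$-direction) diverges whenever this function is not identically zero, while the left-hand side is a convergent compact integral. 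So the parameter spaces genuinely fail to line up on the strength of the dimension hypothesis alone, and your unfolding step cannot be completed as described.

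What actually makes the argument work where the corollary is used in the paper is the transversality supplied by Lemma~\ref{lem:key}\ref{it:MaxIs}: for the pairs $\fl_t,\fr_t$ the natural projections $\fl_t/\fn_t\to\fu_t/\fr_t^{\bot}$ and $\fr_t/\fn_t\to\fu_t/\fl_t^{\bot}$ are isomorphisms, so every rational translate $\gamma$ arises from $\fr(\K)$ modulo $\fr\cap\fr'$, the sum-plus-compact-integral unfolds cleanly to the adelic integral over $R/(R\cap R')$, and one obtains \eqref{eq:swap} in Lemma~\ref{lem:step}\ref{it:easy}. To repair your write-up you should import this nondegeneracy (it is exactly the ``symmetric duality roles'' your sketch appeals to, but it is an additional geometric input, not a consequence of $\dim\fr=\dim\fr'$), verify that with it the leftover rational translates are killed by character orthogonality over $[(\fr\cap\fr')/\fn_{S,\varphi}]$, and only then perform the unfolding over $V=\fr/(\fr\cap\fr')$; the example above shows that no argument can close the gap in the bare generality you assumed.
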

\begin{proof}
Let $\fri:=\fr\cap \fr'$. 
We claim that the natural map $p:\fr/\fri \to \fri^{\bot}/(\fr')^{\bot}$ is an isomorphism.
Indeed, from the assumption $\fr\cap (\fr')^{\bot}\subseteq \fr'$ we have $\fri = \fr\cap (\fr')^{\bot}$  and thus $p$ is an embedding. Further, from the assumption $\dim \fr=\dim \fr'$ we obtain that the source and the target spaces of $p$ have the same dimension. Indeed,
\begin{equation}
\dim (\fri^{\bot}/(\fr')^{\bot})= \dim \fu -\dim \fri - (\dim \fu -\dim \fr')=\dim \fr-\dim \fri= \dim (\fr/\fri)
\end{equation}
Now, $p$ defines a natural isomorphism $\Exp(\fri^\perp/(\fr')^{\bot})\IsoTo{}\Exp(\fr/\fri)$.
Let $I:=\Exp(\fri(\A))=R\cap R'$.
From Lemma \ref{lem:StvN} we obtain
\begin{equation}
\cF_{S,\varphi,\varphi'}^R[\eta](g) = 
 \intl_{[R/I]} \sum_{\gamma \in \Exp(\fr/\fri)} \cF^{R'}_{S,\varphi,\varphi'}[\eta](\gamma ug) \, du
=
\intl_{R/I} \cF^{R'}_{S,\varphi,\varphi'}[\eta](ug) \, du\,.  \qedhere
\end{equation}
\end{proof}
This corollary can be seen as a version of the root exchange lemma in \cite{GRS}. 

\subsection{Relating different Whittaker pairs}\label{subsec:rel}

Let $(H,\varphi)$ be a Whittaker pair.

\begin{lem}
Let $Z$ be as in Lemma \ref{lem:Z}. Then $(H-Z,\varphi)$ dominates $(H,\varphi)$.
\end{lem}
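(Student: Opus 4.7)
The plan is to unpack the definition of dominance and reduce everything to a familiar $\sll_2$-weight computation. By Definition \ref{def:dominate} applied to $(H',\varphi) := (H-Z,\varphi)$ dominating $(S,\varphi) := (H,\varphi)$, I need two things: (a) that $H-Z$ and $H$ commute, and (b) the inclusion
\begin{equation*}
    \fg_{\varphi}\cap \fg^{H-Z}_{\geq 1}\subseteq \fg^{H-(H-Z)}_{\geq 0}=\fg^{Z}_{\geq 0}.
\end{equation*}
Condition (a) is immediate: by Lemma \ref{lem:Z} we have $Z\in \fg^{H}_{0}$, \ie $[H,Z]=0$, hence $[H-Z,H]=0$.

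For (b), I write $h:=H-Z$, which by Lemma \ref{lem:Z} forms a neutral Whittaker pair with $\varphi$. I would in fact prove the stronger statement $\fg_{\varphi}\cap \fg^{h}_{\geq 1}=0$, which makes the desired inclusion vacuous. Since $(h,\varphi)$ is neutral, $h$ and $f:=f_{\varphi}$ can be completed to an $\sll_2$-triple $(e,h,f)$ with $[h,f]=-2f$. Using the $\ad$-invariance of the Killing form to translate the condition $\ad^{*}(X)\varphi=0$ into $[f,X]=0$, one identifies the coadjoint stabilizer $\fg_{\varphi}$ with the centralizer $\fg^{f}:=\ker \ad(f)$ of $f$ in $\fg$.

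Decomposing $\fg$ as an $\ad\sll_2$-module, each irreducible summand of highest weight $n\geq 0$ carries $h$-weights $-n,-n+2,\ldots,n-2,n$, and the kernel of the lowering operator $\ad(f)$ is precisely the lowest-weight line, living in $h$-weight $-n\leq 0$. Hence $\fg^{f}\subseteq \fg^{h}_{\leq 0}$, and intersecting with $\fg^{h}_{\geq 1}$ gives $0$, as claimed. There is no real obstacle here: the whole argument is a short unwinding of definitions, and the only point that requires a moment of care is the correct use of Killing-form duality to pass from $\fg_{\varphi}$ to the more tractable $\ker \ad(f)$.
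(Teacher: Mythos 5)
Your proof is correct and follows essentially the same route as the paper: the paper also sets $h:=H-Z$ and observes that $\fg_{\varphi}$ is spanned by lowest-weight vectors for the $\sll_2$-triple, hence lies in $\fg^h_{\leq 0}$, making $\fg_{\varphi}\cap\fg^h_{\geq 1}=\{0\}$ and the dominance inclusion vacuous. Your write-up merely spells out the Killing-form identification of $\fg_\varphi$ with $\ker\ad(f)$ and the commutativity check, which the paper leaves implicit.
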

\begin{proof}
Denote $h:=H-Z$. We have to show that \eqref{=domin} holds, {\it i.e.}
\begin{equation}
\fg_{\varphi}\cap \fg^h_{\geq 1}\subseteq \fg^Z_{\geq0}\,.
\end{equation}
Since $\fg_{\varphi}$ is spanned by lowest weight vectors, we have $\fg_{\varphi}\subseteq \fg^h_{\leq 0}$ and thus $\fg_{\varphi}\cap \fg^h_{\geq 1}=\{0\}$.
\end{proof}

\begin{cor}
\label{cor:domin-neutral}
Any Whittaker pair is dominated by a neutral Whittaker pair with the same character $\varphi$.
\end{cor}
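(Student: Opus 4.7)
The plan is a direct chain of the two results immediately preceding the corollary. Given an arbitrary Whittaker pair $(H,\varphi)$, I would first invoke Lemma~\ref{lem:Z} to obtain an element $Z \in \fg^H_0$ for which the pair $(h,\varphi) := (H-Z,\varphi)$ is neutral. I would then apply the unnamed lemma immediately above (with this same $Z$) to conclude that $(H-Z,\varphi)$ dominates $(H,\varphi)$. Since $(H-Z,\varphi)$ is neutral by construction, it is the required neutral dominator, and the proof is complete in one line.

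No real obstacle arises: the conceptual work has already been discharged by the two preceding lemmas. The key input from Lemma~\ref{lem:Z} is the existence of a semisimple correction $Z$ that commutes with $H$; this ensures the commuting hypothesis of Definition~\ref{def:dominate}, because $h = H - Z$ then commutes with $H$ as well. The dominance inclusion $\fg_\varphi \cap \fg^h_{\geq 1} \subseteq \fg^{H-h}_{\geq 0} = \fg^Z_{\geq 0}$ required by the definition reduces to checking $\fg_\varphi \cap \fg^h_{\geq 1} = 0$, which holds because $(e,h,f_\varphi)$ is an $\mathfrak{sl}_2$-triple and the centralizer $\fg_\varphi$ is spanned by lowest-weight vectors for this triple, hence sits inside $\fg^h_{\leq 0}$. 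Thus the only steps are: (i) cite Lemma~\ref{lem:Z} to produce $Z$, and (ii) cite the preceding lemma to get dominance; no further verification is needed.
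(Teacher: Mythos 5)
Your proposal is correct and follows exactly the paper's route: the paper obtains the corollary as an immediate consequence of Lemma~\ref{lem:Z} together with the preceding lemma stating that $(H-Z,\varphi)$ dominates $(H,\varphi)$, whose proof is precisely your observation that $\fg_\varphi$ is spanned by lowest-weight vectors, so $\fg_\varphi\cap\fg^h_{\geq 1}=\{0\}$. Nothing is missing.
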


Another example of domination is provided by the following proposition, that immedi\-ately follows from \cite[Proposition 3.3.3]{GGS}.

\begin{prop}\label{prop:PL}
If $\varphi$ is a PL nilpotent then there exists $Z\in \fg$ such that $(H+Z,\varphi)$ is a standard Whittaker pair and $(H,\varphi)$ dominates $(H+Z,\varphi)$. 
\end{prop}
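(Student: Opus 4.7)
The plan is to construct $Z$ as a rational semi-simple element in the centre of a $\K$-Levi $\fl$ in which $\varphi$ is principal, chosen so that $\fl$ is well-aligned with the non-neutral part of $H$.

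By Lemma~\ref{lem:Z} I would decompose $H = h + W$ with $(h,\varphi)$ a neutral Whittaker pair and $W \in \fg^H_0$. From $\ad^*(H)\varphi = \ad^*(h)\varphi = -2\varphi$ we deduce $\ad^*(W)\varphi = 0$, so $W \in \fg_\varphi$. Completing $(h,\varphi)$ to an $\sl_2$-triple $(e,h,f)$ where $\varphi = \langle f,\cdot\rangle$, the vanishing $[h,W] = 0$ and $[f,W] = 0$ place $W$ in the weight-zero lowest-weight vectors for the $\sl_2$-action on $\fg$; thus $W$ lies in the $\sl_2$-centralizer $\fg^{\sl_2}$, which is the Lie algebra of the reductive $\K$-group $G^{\sl_2}$.

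Since $\varphi$ is PL, pick a $\K$-Levi $\fl_0 \subseteq \fg$ in which $f$ is principal. By Kostant's rational conjugacy of $\sl_2$-triples completing $f$, we may conjugate by the centralizer of $f$ so that $(e,h,f) \subseteq \fl_0$; then $(e,h,f)$ is a principal $\sl_2$-triple in $\fl_0$ and $\fg^{\sl_2} \cap \fl_0 = \mathfrak{z}(\fl_0)$. By Lemma~\ref{lem:PrinDist}, $f$ is $\K$-distinguished in $\fl_0$, so the argument in the proof of Lemma~\ref{lem:LeviConj} shows that the maximal $\K$-split subtorus of $\mathfrak{z}(\fl_0)$ is a maximal $\K$-split torus of $G^{\sl_2}$. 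Borel--Tits conjugacy of maximal $\K$-split tori in the reductive group $G^{\sl_2}$ then supplies $g \in G^{\sl_2}(\K)$ conjugating some maximal $\K$-split torus of $G^{\sl_2}$ containing $W$ onto this torus; replacing $\fl_0$ by $\fl := \Ad(g^{-1})\fl_0$ produces a $\K$-Levi containing $(e,h,f)$ (since $g$ fixes the triple), in which $f$ is still principal, and with $W \in \mathfrak{z}(\fl)$.

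Now I choose $Z_0 \in \mathfrak{z}(\fl)$ of the form $Z_0 = W + \epsilon Z^*$ with $Z^* \in \mathfrak{z}(\fl)$ generic (so $\fg^{Z^*}_0 = \fl$) and $\epsilon > 0$ small enough that $\fg^{Z_0}_0 = \fl$ and $\sgn\chi(Z_0) = \sgn\chi(W)$ for every nonzero $\mathfrak{z}(\fl)$-character $\chi$ appearing in the $\fl$-module decomposition of $\fg$ with $\chi(W) \neq 0$. Set $Z := TZ_0 - W$ for a sufficiently large positive rational $T$; since $Z \in \mathfrak{z}(\fl)$ it commutes with $h$ and $W$, hence with $H$. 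Then $H + Z = h + TZ_0$ with $\varphi$ principal, hence $\K$-distinguished, in $\fl = \fg^{TZ_0}_0$, so for $T$ large this is a Levi-distinguished Whittaker pair, and by Lemma~\ref{lem:WhitPL} it is standard. For dominance $\fg_\varphi \cap \fg^H_{\geq 1}\subseteq \fg^Z_{\geq 0}$: any $X$ in the intersection sits in the $\chi$-isotypic component for the $\mathfrak{z}(\fl)$-action; the case $\chi = 0$ would give $X \in \fl$ with $\ad(W)$-weight $0$, so the $H$-weight of $X$ would equal its $h$-weight which is $\leq 0$ as $\fg_\varphi \subseteq \fg^h_{\leq 0}$, contradicting $H$-weight $\geq 1$; hence $\chi \neq 0$, and $H$-weight $\geq 1$ with $h$-weight $\leq 0$ forces $\chi(W) \geq 1$, whence $\chi(Z_0) > 0$ by construction and $\chi(Z) = T\chi(Z_0) - \chi(W) \geq 0$ for $T$ sufficiently large. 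The main technical obstacle is the second step --- producing a $\K$-Levi $\fl$ containing both $(e,h,f)$ and $W$ with $f$ principal in $\fl$ --- which hinges on the maximality of $\mathfrak{z}(\fl_0)$ as a $\K$-split torus of $G^{\sl_2}$ and on Borel--Tits conjugacy in $G^{\sl_2}$.
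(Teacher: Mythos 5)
Your argument is sound in substance, and it takes a genuinely different route from the paper: the paper does not prove Proposition~\ref{prop:PL} internally at all, but quotes it as an immediate consequence of \cite[Proposition 3.3.3]{GGS}, whereas you give a self-contained construction assembled from the paper's own toolkit --- Lemma~\ref{lem:Z} to write $H=h+W$, the observation that $W$ centralizes the whole triple $(e,h,f)$, Lemma~\ref{lem:PrinDist} together with the torus-maximality argument inside the proof of Lemma~\ref{lem:LeviConj} to move $W$ into the split central torus of a $\K$-Levi $\fl$ in which $\varphi$ is principal, and then the Levi-distinguished criterion (the remark after Definition~\ref{def:Levi-distinguished}, cf.\ Lemma~\ref{lem:LeviDist}) plus Lemma~\ref{lem:WhitPL} for standardness. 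The sign bookkeeping with $Z_0=W+\epsilon Z^*$ and $Z=TZ_0-W$, and the case split $\chi=0$ versus $\chi\neq 0$ for verifying \eqref{=domin}, are correct; note also that your construction runs visibly parallel to the paper's own $Z'=NZ+z$ in Lemmas~\ref{lem:Z'main}--\ref{lem:LeviDist}. What your route buys is independence from the external reference, at the cost of redoing an argument the authors chose to import.

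One step needs a sentence of care: ``some maximal $\K$-split torus of $G^{\sl_2}$ containing $W$'' need not exist. Rational semi-simplicity only says $\ad(W)$ has rational eigenvalues, so $W$ may have a nonzero component $W_a$ tangent to an anisotropic central torus of ${\bf G}$ (for instance ${\bf G}=T_{\mathrm{an}}\times\SL_2$ with $H=h+W_a$); such a $W$ lies in the Lie algebra of no split torus of $G^{\sl_2}$, and the Borel--Tits conjugacy step cannot be applied to it as stated. The fix is one line: write $W=W_s+W_a$ where $\ad(W_a)=0$ and $W_s$ is tangent to a split torus of $G^{\sl_2}$ (rationality of the $\ad$-eigenvalues forces the anisotropic part of the torus generated by $W$ to act trivially on $\fg$), run the conjugation and set $Z:=TZ_0-W_s$; since every condition in play --- the $H$-eigenspaces, standardness of $H+Z=h+W_a+TZ_0$, and the inclusion \eqref{=domin} --- depends only on adjoint actions, nothing changes. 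In the same spirit, take $Z^*$ (hence $Z_0$) rational in the Lie algebra of the maximal split central torus $C$ of $L$ and read your characters $\chi$ as $C$-weights of $\fg$, so that $Z_0$ is rational semi-simple and the weight decomposition you sum over is defined over $\K$.
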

From now till the end of the section let $Z\in \fg_0^H$ be a rational semi-simple element such that $(H,\varphi)$ dominates $(H+Z,\varphi)$.
We will now consider the deformation of the former Whittaker pair to the latter.
For any rational number $t \geq 0$ define \begin{equation}\label{=ut}
H_t:=H+tZ,\quad \fu_t:=\fg^{H_t}_{\geq 1},\quad \fv_t:=\fg^{H_t}_{> 1},\text{ and }\fw_t:=\fg^{H_t}_{1}. \quad
\end{equation}
\begin{defn}\label{def:crit}
We call $t \geq 0$ \emph{regular} if $\fu_t = \fu_{t+\eps}$ for any small enough $\eps\in \Q$, or in other words $\fw_t\subset \fg^Z_0$. If $t$ is not regular we call it \emph{critical}. Equivalently, $t$ is critical if $\fg^{H_t}_{1}\nsubseteq \fg^{Z}_{0}$ which we may interpret as something new has entered the $1$-eigenspace of $H$.
For convenience, we will say that $t=0$ is critical.

We also say that $t \geq 0$ is \emph{quasi-critical} if either $\fg^{H_t}_{1}\nsubseteq \fg^{Z}_{0}$ or $\fg^{H_t}_{2}\nsubseteq \fg^{Z}_{0}$. We may interpret this as something new has entered either the $1$-eigenspace or the $2$-eigenspace. The latter is related to new characters being available in the Whittaker pairs.
Note that there are only finitely many critical numbers.
\end{defn}

Recall the anti-symmetric form $\omega_\varphi$ on $\fg$ given by $\omega_\varphi(X,Y)=\varphi([X,Y])$ and the definition $\lie n_{H_t,\varphi} := \Ker(\omega_\varphi|_{\fu_t})$.
\begin{lemma}[{\cite[Lemma 3.2.6]{GGS}}]\label{lem:help}\hfill
\begin{enumerate}
\item \label{it:OmInv} The form $\omega_\varphi$ is $\ad(Z)$-invariant.
\item \label{it:KerOm} $\Ker \omega_\varphi = \fg_\varphi$.
\item \label{it:KerNeg} $\Ker(\omega_\varphi|_{\fw_t})=\Ker(\omega_\varphi)\cap \fw_t$. 
\item \label{it:Kerv} $\Ker(\omega_\varphi|_{\fu_t})=\fv_t\oplus \Ker(\omega_\varphi|_{\fw_t}) $.
\item \label{it:LW}  $\fw_s\cap\fg_\varphi\subseteq \fu_{t}$  for any $s<t$.
\end{enumerate}
\end{lemma}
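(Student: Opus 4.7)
The key technical observation is that $(H,\varphi)$ and $(H+Z,\varphi)$ being Whittaker pairs with the same $\varphi$ forces $\ad^*(Z)\varphi=0$; the whole lemma will run on this together with weight bookkeeping under $\ad(H_t)$. For (1), the identity
\[
\omega_\varphi([Z,X],Y)+\omega_\varphi(X,[Z,Y])=\varphi([Z,[X,Y]])=-(\ad^*(Z)\varphi)([X,Y])=0
\]
gives $\ad(Z)$-invariance directly. For (2), unwinding definitions identifies $\Ker\omega_\varphi$ with $\{X:\varphi([X,Y])=0\ \forall Y\}$, which is exactly the coadjoint stabilizer $\fg_\varphi$.

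The same calculation applied to $H_t=H+tZ$ shows $\ad^*(H_t)\varphi=-2\varphi$, so $\omega_\varphi$ pairs $\fg^{H_t}_a$ with $\fg^{H_t}_b$ nontrivially only when $a+b=2$. This weight rule handles (3) and (4). For (3), the nontrivial direction is: if $X\in\fw_t=\fg^{H_t}_1$ is orthogonal to $\fw_t$ under $\omega_\varphi$, then for $Y$ in any other $\ad(H_t)$-weight space $\fg^{H_t}_a$ with $a\neq 1$ the weight sum is not $2$, so $\omega_\varphi(X,Y)=0$ automatically; summing over the $\ad(H_t)$-decomposition gives $X\in\Ker\omega_\varphi$. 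For (4), decompose $\fu_t=\fv_t\oplus\fw_t$; brackets $[\fv_t,\fu_t]$ live in $\ad(H_t)$-weights $>2$, so $\omega_\varphi$ annihilates $\fv_t\times\fu_t$. Hence $X=X_v+X_w\in\fu_t$ lies in $\Ker(\omega_\varphi|_{\fu_t})$ iff $X_w\in\Ker(\omega_\varphi|_{\fw_t})$, with $X_v$ free in $\fv_t$.

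Part (5) is where the domination hypothesis must do the real work, and is the main obstacle. The plan is to decompose $X\in\fw_s\cap\fg_\varphi$ using the joint eigenspaces of the commuting pair $(\ad(H),\ad(Z))$. The preliminary ingredient is that $\fg_\varphi$ is invariant under both $\ad(H)$ and $\ad(Z)$, which follows routinely from $\ad^*(H)\varphi=-2\varphi$ and $\ad^*(Z)\varphi=0$ together with the commutator identity for $\ad^*$. So write $X=\sum X_{a,b}$ with $X_{a,b}\in\fg^H_a\cap\fg^Z_b\cap\fg_\varphi$; membership in $\fw_s=\fg^{H_s}_1$ forces $a+sb=1$ on each nonzero summand. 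The goal $X_{a,b}\in\fu_t=\fg^{H_t}_{\geq 1}$ rearranges to $(t-s)b\geq 0$, i.e.\ $b\geq 0$ since $s<t$.

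The case split closes the proof: if $a\geq 1$ then the domination condition $\fg_\varphi\cap\fg^H_{\geq 1}\subseteq\fg^Z_{\geq 0}$ gives $b\geq 0$ directly on $X_{a,b}$; if $a<1$ then $sb=1-a>0$ forces both $s>0$ and $b>0$. Either way $b\geq 0$, hence $a+tb=1+(t-s)b\geq 1$, so $X_{a,b}\in\fu_t$ for every summand and thus $X\in\fu_t$. The delicate point is ensuring that the joint eigenspace decomposition preserves membership in $\fg_\varphi$ so that the domination inclusion can be applied componentwise — this is exactly what the invariance of $\fg_\varphi$ under $\ad(H)$ and $\ad(Z)$ guarantees.
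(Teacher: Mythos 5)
Your proof is correct. The paper itself does not prove this lemma --- it is quoted from \cite[Lemma 3.2.6]{GGS} --- so there is no in-text argument to compare with, but yours is the expected one: parts (i)--(iv) are exactly the weight bookkeeping that follows from $\ad^*(Z)\varphi=0$ and $\varphi\in(\fg^*)^{H_t}_{-2}$, and for part (v) you correctly identified that the dominance inclusion \eqref{=domin} is what replaces the inclusion $\fg_\varphi\subseteq\fg^{h}_{\leq 0}$ available in the neutral-pair setting of \cite{GGS}, applied componentwise through the joint $(\ad(H),\ad(Z))$-eigenspace decomposition of the $\ad(H)$- and $\ad(Z)$-stable subspace $\fg_\varphi$. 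The only implicit hypothesis you use is $s\geq 0$ (needed in your case $a<1$), which is indeed part of the setup here, since $\fu_t,\fv_t,\fw_t$ are only defined for rational $t\geq 0$.
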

We will often suppress the deformation $H_t$ and character $\varphi$ and simply write $\fn_t = \fn_{H_t,\varphi}$. Similarly, define
\begin{equation}\label{=lt}
  \fl_t = \fl_{H_t,\varphi} := (\fu_t\cap \fg^{Z}_{< 0})+\fn_{H_t,\varphi} \quad \text{ and } \quad \fr_t = \fr_{H_t,\varphi}:=(\fu_t\cap \fg^{Z}_{> 0})+\fn_{H_t,\varphi}.
\end{equation}
We note that $\fl_t$ and $\fr_t$ are nilpotent subalgebras. The choice of notation for them comes from `left' and `right'.

\begin{lemma}
\label{lem:key}
    For any $t\geq0$ we have
\begin{enumerate}
\item \label{it:lrCom} $\fl_t$ and $\fr_t$ are ideals in $\fu_t$ and $[\fl_t,\fr_t]\subseteq \fl_t\cap\fr_t=\fn_t$.

\item \label{it:MaxIs} $\fl_t$ and $\fr_t$  are isotropic subspaces of $\fu_t$ of the same dimension, and the natural projections $\fl_t/\fn_t\to \fu_t/\fr_t^\bot$ and $\fr_t/\fn_{t} \to \fu_t/\fl_t^\bot$ are isomorphisms.
Furthermore, $\lie l_t = \lie g^{H_t}_1 \cap \lie g^Z_{<0} \oplus \lie n_t$.
    
\item \label{it:Emb} Suppose that $0\leq s< t$, and all the elements of the interval $(s,t)$ are regular.
Then 
\begin{equation}\label{=vw}
\fv_{t}\oplus (\fw_{t}\cap \fg^Z_{<0})=\fv_{s}\oplus (\fw_{s}\cap \fg^Z_{>0})
\end{equation}
\begin{equation}\label{=lrw}
\fl_{t}=\fr_{s}+ (\fw_{t}\cap \fg_{\varphi}) \text{ and }\fr_{s}\cap (\fw_{t}\cap \fg_{\varphi})=\fw_{0}\cap \fg^Z_0\cap \fg_{\varphi}.
\end{equation}
Moreover, $\fr_{s}$ is an ideal in $\fl_{t}$ and the quotient is commutative.
\end{enumerate}
\end{lemma}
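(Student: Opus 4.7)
The plan is to carry out all of parts (i)--(iii) using the simultaneous $(H,Z)$-bigrading $\fg = \bigoplus_{a,b\in\Q}\fg^{(H,Z)}_{(a,b)}$, which is available because $Z \in \fg^H_0$ commutes with every $H_t = H+tZ$. In this bigrading, $\fg^{(H,Z)}_{(a,b)}$ lies in $\fu_t$ iff $a+tb\geq 1$, in $\fv_t$ iff $a+tb>1$, and in $\fw_t$ iff $a+tb=1$. Two running observations will do most of the work: (a) since $(H,\varphi)$ dominates $(H+Z,\varphi)$, we have $\ad^*(Z)\varphi = 0$, so $\omega_\varphi$ is $Z$-invariant and pairs $\fg^Z_a$ with $\fg^Z_{-a}$; and (b) for $X,Y\in\fu_t$, $[X,Y]\in\fg^{H_t}_{\geq 2}\subseteq\fv_t\subseteq\fn_t$ by Lemma~\ref{lem:help}\ref{it:Kerv}, so bracketing any two subspaces of $\fu_t$ lands in $\fn_t$.

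Part (i) is then essentially free. By (b), both $\fl_t$ and $\fr_t$ are ideals of $\fu_t$ and $[\fl_t,\fr_t]\subseteq\fv_t\subseteq\fn_t$. The containment $\fn_t\subseteq\fl_t\cap\fr_t$ is clear; for the reverse, I would decompose an element of $\fl_t\cap\fr_t$ by $Z$-weight (both summands in $\fn_t=\fv_t\oplus(\fg_\varphi\cap\fw_t)$ are $Z$-stable), and read off that each $Z$-weight component must lie in $\fn_t$.

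For part (ii), isotropy of $\fl_t$ reduces to $\omega_\varphi$ vanishing on $\fu_t\cap\fg^Z_{<0}$, which is (a). To obtain the isomorphism $\fl_t/\fn_t\IsoTo\fu_t/\fr_t^\bot$, I would use that the induced symplectic form on $\fu_t/\fn_t$ respects the $Z$-grading and perfectly pairs $(\fu_t\cap\fg^Z_a)/(\fn_t\cap\fg^Z_a)$ with $(\fu_t\cap\fg^Z_{-a})/(\fn_t\cap\fg^Z_{-a})$; this yields $\dim\fl_t=\dim\fr_t$, and combined with $\fl_t\cap\fr_t^\bot=\fn_t$ (checked with the same $Z$-weight decomposition) gives the isomorphism. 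For $\fl_t=(\fg^{H_t}_1\cap\fg^Z_{<0})\oplus\fn_t$, I would split $\fu_t=\fw_t\oplus\fv_t$, absorb $\fv_t\cap\fg^Z_{<0}\subseteq\fn_t$, and verify $(\fw_t\cap\fg^Z_{<0})\cap\fn_t=0$: any such element lies in $\fg_\varphi\cap\fw_t\cap\fg^Z_{<0}$, whose bigraded components satisfy $a+tb=1$, $b<0$, hence $a>1$, contradicting the domination hypothesis $\fg_\varphi\cap\fg^H_{\geq 1}\subseteq\fg^Z_{\geq 0}$.

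Part (iii) is where the bookkeeping gets delicate, and it will be the main obstacle. For \eqref{=vw}, membership on each side is a condition on bigrading labels: the LHS asks $(a+tb>1)$ or $(a+tb=1$ and $b<0)$, while the RHS asks $(a+sb>1)$ or $(a+sb=1$ and $b>0)$; regularity of $(s,t)$ means that no $(a,b)$ with nontrivial graded piece satisfies $a+t'b=1$ for any $t'\in(s,t)$, and a short case-split on the sign of $b$ shows that the two conditions coincide. Substituting \eqref{=vw} into $\fl_t=\fv_t+(\fw_t\cap\fg^Z_{<0})+(\fw_t\cap\fg_\varphi)$ and using Lemma~\ref{lem:help}\ref{it:LW} to absorb $\fw_s\cap\fg_\varphi$ into $\fv_t+(\fw_t\cap\fg_\varphi)$ then yields the first identity in \eqref{=lrw}. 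For the intersection in \eqref{=lrw}, I would again work bigrade by bigrade: components lie in $\fg_\varphi$ with $a+tb=1$ and $a+sb\geq 1$, forcing $b\leq 0$, and the case $b<0$ is ruled out by the same domination argument as in part (ii), leaving precisely $\fw_0\cap\fg^Z_0\cap\fg_\varphi$. Finally, \eqref{=vw} implies both $\fr_s\subseteq\fu_t$ and $\fv_t\subseteq\fr_s$, so $[\fl_t,\fl_t]\subseteq\fv_t\subseteq\fr_s$ by (b), which makes $\fr_s$ an ideal of $\fl_t$ with commutative quotient.
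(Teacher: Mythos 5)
Your proposal is correct and takes essentially the same route as the paper's own proof: the simultaneous $(H,Z)$-eigenspace decomposition, the $\ad(Z)$-invariance of $\omega_\varphi$ (equivalently $\ad^*(Z)\varphi=0$), and the inclusion $[\fu_t,\fu_t]\subseteq\fv_t\subseteq\fn_t$ are precisely what underlie the paper's terse argument, whose four displayed identities in part (iii) are your bigrading case-split in disguise; you merely make explicit what the paper calls ``easy'' or ``straightforward''. The only cosmetic slips are that regularity should be stated as excluding nontrivial bigraded pieces with \emph{nonzero} $Z$-weight on the line $a+t'b=1$, and that in the $t=0$ case your ``hence $a>1$'' becomes $a\geq 1$, which still contradicts the domination hypothesis $\fg_{\varphi}\cap\fg^H_{\geq 1}\subseteq\fg^{Z}_{\geq 0}$.
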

\begin{proof}

  It is easy to see that $\fv_t$ is an ideal in $\fu_t$ with commutative quotient, and that $\fv_t\subseteq \fl_t\cap\fr_t=\fn_t$. This proves \ref{it:lrCom}. For the first part of \ref{it:MaxIs}, note that $(\fl_t+\fr_t)/\fn_t$ is a symplectic space in  which the projections of $\fl_t$ and $\fr_t$ are complementary Lagrangians. 

For the second part, we have by Lemma~\ref{lem:help} that $\lie g^{H_t}_1 \cap \lie g_\varphi \subseteq \lie g^Z_{\geq 0}$ and thus,
\begin{equation}
    \label{eq:lt}
    \fl_t=\fv_{t}\oplus (\fw_{t}\cap \fg^Z_{<0})\oplus (\fw_{t}\cap \fg_{\varphi}) \, .
\end{equation}

For part \ref{it:Emb} note that 
\begin{align}
\fv_{s}&=(\fv_{s}\cap \fg^Z_{\geq0}) \oplus (\fv_{s}\cap \fg^Z_{<0}) \,,\\
\fv_{t}&= (\fv_{t}\cap \fg^Z_{<0}) \oplus (\fv_{t}\cap \fg^Z_{\geq0}) \,,\\
\fv_{t}\cap \fg^Z_{\geq0}&=(\fw_{s}\cap \fg^Z_{>0})\oplus (\fv_{s}\cap \fg^Z_{\geq0})\,,\\
\fv_{s}\cap\fg^Z_{<0}&=(\fw_{t}\cap \fg^Z_{<0})\oplus (\fv_{t}\cap \fg^Z_{<0})\,.
\end{align}
This implies \eqref{=vw}. By Lemma \ref{lem:help} we have 
\begin{equation}\fn_s= \fv_s\oplus(\fg_{\varphi}\cap \fw_s)\subseteq \fv_s\oplus (\fw_s\cap \fg^{Z}_{\geq 0}),
\end{equation} and thus 
\begin{equation}\fr_s=\fv_{s}\oplus (\fw_{s}\cap \fg^Z_{>0})\oplus (\fw_0\cap \fg^Z_0\cap\fg_{\varphi}) \text{ and }\fr_{s}\cap (\fw_{t}\cap \fg_{\varphi})=\fw_{0}\cap \fg^Z_0\cap \fg_{\varphi}.
\end{equation}

Hence, \eqref{=vw} and \eqref{eq:lt} imply \eqref{=lrw}, and the rest is straightforward.
\end{proof}

Using Lemma \ref{lem:Z}, choose an $\sl_2$-triple $(e_\varphi,h,f_\varphi)$ in $\fg^Z_0$ such that $h$ commutes with $H$ and with $Z$, and $\varphi$ is given by the Killing form pairing with $f = f_\varphi$.  
Define $L_{t} = L_{H_t,\varphi} :=\Exp(\fl_{H_t,\varphi}(\mathbb{A})),\, R_{t}=R_{H_t,\varphi}:=\Exp(\fr_{H_t,\varphi}(\mathbb{A}))$ and $V_t := \Exp(\fv_t(\mathbb{A}))$. 
From Lemmas \ref{lem:key} and \ref{lem:StvN} we get the following.

\begin{lem}\label{lem:step}
  Let $(H,\varphi)$ and $(S,\varphi)$ be Whittaker pairs such that $(H,\varphi)$ dominates $(S,\varphi)$.
  Let $Z = S - H$, $H_t = H + tZ$ and define $R_t$, $L_t$ and $V_t$ as above.
\begin{enumerate}
\item \label{it:easy}
Let $t \geq 0$ and $\varphi'\in (\fg^*)^{H_t}_{>-2}$. Let also $I = L_t$ and $\mathfrak{j} = \fr_t$, or $I = R_t$ and $\mathfrak{j} = \fl_t$ respectively.
Let $A:=\Exp((\fu_t/\fv_t)^Z_{<0})(\A)$.
Then,
\begin{gather}
    \label{eq:FHtoFI}
  \cF_{H_t, \varphi, \varphi'}[\eta](g) = \quad\suml{\gamma \in \Exp(\mathfrak{j}/\fn_t)}\quad \cF_{H_t, \varphi, \varphi'}^{I}[\eta](\gamma g)  \, , \\
  \cF_{H_t, \varphi, \varphi'}^{I}[\eta](g) = \;\intl{[I/N_{H_t, \varphi}]}\; \cF_{H_t, \varphi, \varphi'}[\eta](ug) \, du \, , \\
  \label{eq:swap}
  \cF_{H_t,\varphi,\varphi'}^{L_t}[\eta](g) = \intl_{A} \cF_{H_t,\varphi,\varphi'}^{R_t}[\eta](vg) \, dv \,,
\end{gather}
\item \label{it:step}  
Now, let $0 \leq s < t$ such that there are no critical values in the interval $(s,t$) and let $\varphi'\in (\fg^*)^{H_t}_{>-2}\cap(\fg^*)^{H_s}_{>-2}$.
Then,
\begin{align}
  \label{eq:FLfromFR}
    \cF_{H_t,\varphi,\varphi'}^{L_t}[\eta](g) &= \intl_{[L_t/R_s]} \cF_{H_s,\varphi,\varphi'}^{R_s}[\eta](ug) \, du \\
  \label{eq:FRfromFL}
    \cF_{H_s,\varphi,\varphi'}^{R_s}[\eta](g) &= \sum_{\psi' \in \Psi'} \cF_{H_t,\varphi,\varphi'+\psi'}^{L_t}[\eta](g),
\end{align}
where $\Psi' := (\fg^*)^{H_t}_{-1}\cap (\fg^*)^{e_\varphi}\cap (\fg^*)^Z_{<0}$.

\item \label{it:step2} 
  Let $0 \leq s < t$ and $\varphi'$ as in~\ref{it:step}, and let $\psi\in (\fg^*)^{H_s}_{>-2}\cap(\fg^*)^{H_t}_{ -2}$. Then 
\begin{equation}
    \cF_{H_s,\varphi,\psi+\varphi'}^{R_s}[\eta](g) = \intl_{[R_s/V_t]} \sum_{\psi' \in \Psi''} \cF_{H_t,\varphi+\psi,\varphi'+\psi'}[\eta](ug) \, du,
\end{equation}
where $\Psi'' := (\mathfrak{g}^*)^{H_t}_{-1} \cap  (\mathfrak{g}^*)^{e_{\varphi+\psi}}$.

\end{enumerate}
\end{lem}

\begin{proof}
  For part \ref{it:easy} we note that $\fl_t$ and $\fr_t$ are isotropic subspaces of $\fu_t$ containing $\fn_t$ by Lemma~\ref{lem:key}\ref{it:MaxIs}. 
  Thus, the statement follows from Lemma~\ref{lem:StvN} and Corollary~\ref{cor:RootExchange}. The domain of summation for $\gamma$ in \eqref{eq:FHtoFI} follows from Lemma~\ref{lem:key}\ref{it:MaxIs}.
  Note that for Corollary~\ref{cor:RootExchange} we have that $\fl_t \cap \fr_t^\perp = \fl_t \cap \fr_t=\fn_t$ and $\fl_t/\fn_t\cong (\fu^{t}/\fv^t)^Z_{<0}$ by Lemma~\ref{lem:key}.
  
  For part \ref{it:step} we note from Lemma~\ref{lem:key}\ref{it:Emb} that $\fr_s \subseteq \fl_t$ and that $\fl_t/\fr_s$ is commutative with a set of representatives $\fg^{H_t}_1 \cap \fg_\varphi \cap \fg^Z_{>0}$.
  Indeed, $\fl_t/\fr_s$ projects naturally and isomorphically to $(\fg^{H_t}_1 \cap \fg_{\varphi})/(\fg^{H_t}_1 \cap \fg_\varphi \cap \fg^Z_0)$, and $\fg^{H_t}_1 \cap \fg_\varphi \subseteq \fg^Z_{\geq0}$ by Lemma~\ref{lem:help}.
  Thus, \eqref{eq:FLfromFR} follows directly by integration, while \eqref{eq:FRfromFL} follows from a Fourier expansion in $L_t/R_s$ with characters given by $(\fl_t/\fr_s)^*$ for which we have the set of representatives $\Psi' := (\fg^*)^{H_t}_{-1}\cap (\fg^*)^{e_\varphi}\cap (\fg^*)^Z_{<0}$.

  For part \ref{it:step2}, note first that $N'_t:=N_{H_t, \varphi+\psi}$ and $R_s = R_{H_s, \varphi}$ depend on different characters which means that we cannot directly relate them using Lemma~\ref{lem:key}\ref{it:Emb} as we did in part~\ref{it:step}.
  Instead, we will relate them both to $V_t = \Exp(\fv_t(\mathbb{A}))$ where $\fv_t := \fg^{H_t}_{>1}$ which does not depend on any character.
  
  We have that $\fv_t$ is an ideal in $\fl_t := \fl_{H_t, \varphi}$ with commutative quotient.
  Since $\fl_t$ and $\fr_s := \fr_{H_s, \varphi}$ are part of the same deformation with the same character we can then use \eqref{=lrw} in Lemma~\ref{lem:key}\ref{it:Emb} to see that $\fv_t$ is an ideal in $\fr_s$ with commutative quotient.
  Thus, 
  \begin{equation}
    \label{eq:part2-Vt}
    \cF^{R_s}_{H_s,\varphi,\psi+\varphi'}[\eta](g) = \intl{[R_s/V_t]} \cF_{V_t}[\eta](ug)\, du \quad \text{ where } \quad \cF_{V_t}[\eta](g) := \intl{[V_t]} \eta(vg)\chi_{\varphi+\psi+\varphi'}(v)^{-1} dv. 
  \end{equation}
 
  Furthermore, $\fv_t$ is an ideal in $\fn_t' := \fl_{H_t,\varphi+\psi}$ and $\fn_t'/\fv_t$ is commutative with a set of representatives $\mathfrak{g}^{H_t}_1 \cap  \mathfrak{g}_{\varphi+\psi}$ by \eqref{eq:N_Sphi}.
  The statement now follows from a Fourier expansion of $\cF_{V_t}[\eta](g)$ in $N_t'/V_t$ with characters given by $(\fn_t'/\fv_t)^*$ for which we have the representatives $\Psi'' := (\mathfrak{g}^*)^{H_t}_{-1} \cap (\mathfrak{g}^*)^{e_{\varphi+\psi}}$.
\end{proof}

\begin{lemma}
    \label{lem:conjugation-translation}
    Let $(S, \varphi, \psi)$ be a Whittaker triple, $\eta$ an automorphic function and $\gamma \in \Gamma$. Then,
    \begin{equation}
        \label{eq:conjugation}
        \cF_{S, \varphi, \psi}[\eta](g) = \cF_{\Ad(\gamma) S, \Ad^*(\gamma)\varphi, \Ad^*(\gamma)\psi}[\eta](\gamma g) \, .
    \end{equation}
\end{lemma}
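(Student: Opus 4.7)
The plan is to recognize both sides as the same integral after a change of variables by conjugation by $\gamma$. First, I would verify that the map $(S, \varphi, \psi) \mapsto (\Ad(\gamma) S, \Ad^*(\gamma)\varphi, \Ad^*(\gamma)\psi)$ preserves the Whittaker triple conditions: $\Ad(\gamma) S$ is again rational semi-simple; the identity $\ad^*(\Ad(\gamma)S)(\Ad^*(\gamma)\varphi) = \Ad^*(\gamma)(\ad^*(S)\varphi) = -2\Ad^*(\gamma)\varphi$ holds by $\Gamma$-equivariance; and $\Ad^*(\gamma)\psi \in (\fg^*)^{\Ad(\gamma)S}_{>-2}$ similarly.

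Next, I would identify the unipotent subgroup associated to the conjugated triple. Since the defining conditions for $\fu = \fg^S_{\geq 1}$ and for $\fn_{S,\varphi} = \{X\in \fu : \omega_\varphi(X,Y)=0 \text{ for all } Y \in \fu\}$ transform covariantly under the adjoint action, one obtains
\begin{equation}
  \fn_{\Ad(\gamma)S,\, \Ad^*(\gamma)\varphi} = \Ad(\gamma)\fn_{S,\varphi}.
\end{equation}
Using that the canonical section of $\operatorname{pr}: G \to \mathbf{G}(\mathbb{A})$ over unipotent subgroups (from \cite[Appendix I]{MWCov}) is $\Gamma$-equivariant, this exponentiates to $N_{\Ad(\gamma)S,\, \Ad^*(\gamma)\varphi} = \gamma N_{S,\varphi} \gamma^{-1}$, and conjugation by $\gamma$ is a measure-preserving isomorphism between the probability quotients $[N_{S,\varphi}] \IsoTo [N_{\Ad(\gamma)S,\, \Ad^*(\gamma)\varphi}]$ since it identifies the intersection with $\Gamma$.

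Then, I would substitute $m = \gamma n \gamma^{-1}$ in the integral defining the right-hand side. The character transforms as
\begin{equation}
  \chi_{\Ad^*(\gamma)(\varphi+\psi)}(\gamma n \gamma^{-1}) = \chi\bigl((\varphi+\psi)(\Ad(\gamma^{-1})\log(\gamma n\gamma^{-1}))\bigr) = \chi\bigl((\varphi+\psi)(\log n)\bigr) = \chi_{\varphi+\psi}(n),
\end{equation}
by the $\Ad$--$\Ad^*$ adjunction and functoriality of $\log$ on unipotents. The argument of $\eta$ becomes $(\gamma n \gamma^{-1})(\gamma g) = \gamma n g$, and left $\Gamma$-invariance of $\eta$ gives $\eta(\gamma n g) = \eta(ng)$. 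Combining these, the right-hand side equals $\int_{[N_{S,\varphi}]} \chi_{\varphi+\psi}(n)^{-1}\eta(ng)\, dn = \cF_{S,\varphi,\psi}[\eta](g)$.

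The only mild obstacle is the bookkeeping around the cover: ensuring that the canonical splitting over unipotents is compatible with conjugation by $\Gamma$, and that the Haar probability measure on $[N_{S,\varphi}]$ is preserved under conjugation. Both are routine consequences of the uniqueness of the canonical section and the fact that conjugation is a group isomorphism of quotients. Once these are dispatched, the proof is a one-line change of variables.
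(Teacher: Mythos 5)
Your proposal is correct and follows essentially the same route as the paper's proof: establish $\fn_{\Ad(\gamma)S,\Ad^*(\gamma)\varphi}=\Ad(\gamma)\fn_{S,\varphi}$ and the character identity $\chi_{\Ad^*(\gamma)(\varphi+\psi)}(\gamma n\gamma^{-1})=\chi_{\varphi+\psi}(n)$, then conclude by the change of variables $u=\gamma n\gamma^{-1}$ together with left $\Gamma$-invariance of $\eta$. Your explicit remarks on the $\Gamma$-compatibility of the canonical unipotent splitting and the preservation of the probability measure are details the paper leaves implicit, but they do not change the argument.
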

\begin{proof}
    The proof is straightforward.
    We have that $\chi_{\varphi+\psi}(u) = \chi_{\Ad^*(\gamma)(\varphi + \psi)}(\Ad(\gamma) u )$. Indeed, the right-hand side equals
    \begin{equation}
        \chi\Bigl( \bigl(\Ad^*(\gamma)(\varphi + \psi) \bigr) (\Ad(\gamma) u ) \Bigr) = \chi\bigl((\varphi + \psi)(\Ad(\gamma^{-1}) \Ad(\gamma) u )\bigr) = \chi_{\varphi + \psi}(u)\,.
    \end{equation}
    We also have that $ \lie g^{\Ad(\gamma)S}_\lambda = \Ad(\gamma) \lie g^S_\lambda$ since, for $x \in \lie g$, $[\Ad(\gamma) S, \Ad(\gamma) x] = \Ad(\gamma)[S,x]$. Similarly, $ \lie g_{\Ad^*(\gamma)\varphi} = \Ad(\gamma) \lie g_\varphi$ and thus, $\lie n_{\Ad(\gamma) S,\Ad^*(\gamma) \varphi} = \Ad(\gamma) \lie n_{S,\varphi}$.
    
    Hence, using the automorphic invariance of $\eta$, the right-hand side of \eqref{eq:conjugation} equals
    \begin{equation}
      \intl{[N_{\Ad(\gamma)S,\Ad^*(\gamma)}]} \eta(\gamma^{-1} u \gamma g) \chi_{\Ad^*(\gamma)(\varphi + \psi)}(u)^{-1} \, du = \intl{[\gamma^{-1} (N_{\Ad(\gamma)S,\Ad^*(\gamma)} )\gamma]} \eta(u' g) \chi_{\Ad^*(\gamma)(\varphi + \psi)}(\Ad(\gamma) u')^{-1} \, du' \,.
    \end{equation}
By the arguments above, this equals $\cF_{S, \varphi, \psi}[\eta](g)$.
\end{proof}

\section{The reduction algorithm}
\label{sec:levi-distinguished}

\setcounter{lemma}{0}

\subsection{Statement of algorithm}\label{subsec:alg}

Before stating Algorithm~\ref{alg:domin}, we first start with an algorithm that expresses any quasi-Fourier coef\-ficient $\cF_{H,\varphi,\varphi'}$
in terms of Fourier coefficients $\cF_{S,\psi}$ such that either $(H,\varphi)$ dominates $(S,\psi)$ or 
$\Gamma \psi >\Gamma \varphi$.

\begin{algorithm}\label{alg:quasi}
Given a Whittaker triple $(H,\varphi,\varphi')$, choose $h$ that commutes with $H$ such that $(h, \varphi)$ is a neutral pair. 
This is possible by Lemma~\ref{lem:Z}. Let $Z = H - h$, and let $H_t:=H+tZ$ for any $t\in \Q$.
Let $m_Z$ denote the minimal positive eigenvalue of $Z$ and $M_h$ denote the maximal positive eigenvalue of $h$. 
Let $T:=(M_h+2)/m_Z$. 
If $Z$ is central we set $m_Z$ to 1 for definiteness.
Then for every $i\geq 0$ we have
\begin{equation}\label{=Tbig}
\fg^{H_T}_{\geq i}=(\fg^h_{\geq i})^Z_0\oplus (\fg^{H_T}_{\geq i})^Z_{>0} , \quad
\fg^Z_{>0}\subset \fg^{H_T}_{> 2} \text{ and } \fg^Z_{<0}\subset \fg^{H_T}_{< -2}.
\end{equation}
Since $\fg_{\varphi}\subset \fg^h_{\leq 0}$, and since $\ad(h)$ has integer eigenvalues, we obtain
\begin{equation}\label{=Tbig_phi} 
\fn_{H_T,\varphi}=\fv_T=\fg^{H_T}_{>1}=\fg^{H_T}_{\geq 2}.
\end{equation}

We get two cases depending on whether there are any quasi-critical $t$ in the interval~$(0,T)$. Recall that the number of quasi-critical $t$ is in any case finite.

\begin{enumerate}[align=left, itemindent=1.5\parindent, leftmargin=0pt]
\item \label{it:EasyCase} If there are no quasi-critical $t\in (0,T)$ then, using Lemma \ref{lem:step}\ref{it:easy} and~\ref{it:step}, we may express $\cF_{H,\varphi,\varphi'}$ linearly in terms of the set of all $\cF_{H_T,\varphi,\varphi'+\psi'}$ with $\psi' \in \Psi' = (\lie g^*)^{H+TZ}_{-1} \cap (\lie g^*)^{e_\varphi} \cap (\lie g^*)^{Z}_{<0}$. 
By \eqref{=Tbig}
we have $\Psi'=\{0\}$ and $L_T=N_{H_T,\varphi}$. By \eqref{=Tbig_phi}, $\varphi'$ vanishes on~$\fn_{H_T,\varphi}$, thus $\cF^{L_T}_{H_T,\varphi,\varphi'}=\cF_{H_T,\varphi,\varphi'}=\cF_{H_T,\varphi}$. Altogether we have
\begin{multline}\label{=ES}
\cF_{H, \varphi, \varphi'}[\eta](g) = \quad\suml{\gamma \in \Exp(\fl_0 / \fn_0)}\quad\!\!\! \cF_{H, \varphi, \varphi'}^{R_0}[\eta](\gamma g) =\quad\suml{\gamma \in \Exp(\fl_0 / \fn_0)}\quad\!\!\! \cF_{H_T, \varphi, \varphi'}^{L_T}[\eta](\gamma g) = \quad\suml{\gamma \in \Exp(\fl_0 / \fn_0)}\quad\!\!\! \cF_{H_T, \varphi}[\eta]( \gamma g).
\end{multline}

\item \label{it:DiffCase} Now assume that there are quasi-critical numbers in $(0,T)$ and let $s$ be the smallest one.
Since $s$ is the first quasi-critical value we have that $(\lie g^*)^{H}_{>-2} \subseteq (\lie g^*)^{H_s}_{\geq-2}$ because this is the first point where something new may enter the $-2$-eigenspace.
Decompose $\varphi' = \psi + \varphi'' $ where $\psi \in (\lie g^*)^{H_s}_{-2}$ and $\varphi'' \in (\lie g^*)^{H_s}_{>-2}$.
Using Lemma \ref{lem:step}\ref{it:easy} and~\ref{it:step2}, we may express $\cF_{H,\varphi,\varphi'}$  in terms of the Fourier coefficients $\cF_{H_{s},\varphi+\psi,\varphi'' + \psi''}$ with $\psi''\in \Psi'' = (\mathfrak{g}^*)^{H_s}_{-1} \cap (\mathfrak{g}^*)^{e_{\varphi+\psi}}$ in the following way:
\begin{equation}\label{=DS}
    \cF_{H,\varphi,\varphi'}[\eta](g) = \quad\suml{\gamma \in \Exp(\fl_0/\fn_0)}\quad \cF_{H,\varphi,\varphi'}^{R_0}[\eta](\gamma g) = \!\!\! \sum_{\substack{\gamma \in \Exp(\fl_0/\fn_0) \\ \psi''\in \Psi''}} \, \intl_{[R_0/V_s]} \!\!\! \cF_{H_{s},\varphi+\psi,\varphi'' + \psi''}[\eta](u\gamma g) \, du.
\end{equation}

Now, we repeat the procedure for each triple $\cF_{H_{s},\varphi+\psi,\varphi''+\psi''}$ and so on until we reach an expression that includes only Fourier coefficients.  \hfill$\blacksquare$
\end{enumerate}
\end{algorithm}

\begin{lem}\label{lem:quasiTer}
Algorithm \ref{alg:quasi} terminates in a finite number of steps, and expresses  $\cF_{H,\varphi,\varphi'}$
in terms of Fourier coefficients $\cF_{S_i,\Phi_i}$ such that for each $i$, either $\Phi_i=\varphi$ and $(H,\varphi)$ dominates $(S_i,\Phi_i)$,  or 
$\Gamma \Phi_i >\Gamma \varphi$. 
\end{lem}

We postpone the proof of the above lemma to the next subsection.
Now let $(H,\varphi)$ and $(S,\varphi)$ be Whittaker pairs such that $(H,\varphi)$ dominates $(S,\varphi)$. The following algorithm expresses  $\cF_{H,\varphi}$ in terms of  $\cF_{S,\varphi}$  and 
Fourier coefficients $\cF_{S',\psi}$ such that  
$\Gamma \psi >\Gamma \varphi$. 
\setcounter{maintheorem}{0} 
\begin{mainalgorithm}\label{alg:domin}
Let $Z:=S-H$ and $H_t:=H+tZ$.
Let first $s = 0$ and $t$ be the first quasi-critical point in the interval $(s,1)$. If there are no critical points in this interval we let $t = 1$ which will be the end point of this algorithm.
Using Lemma \ref{lem:step}\ref{it:easy} and~\ref{it:step}, we have that $\cF_{H, \varphi}[\eta](g)$ equals
\begin{align}\label{=EDomin}
\cF_{H_s, \varphi}[\eta](g) &= \quad\suml{\gamma \in \Exp(\fl_s / \fn_s)}\quad \cF_{H, \varphi}^{R_s}[\eta](\gamma g) =
\sum_{\substack{\gamma \in \Exp(\fl_s/\fn_s) \\ \psi'\in \Psi'}}
\cF_{H_t, \varphi, \psi'}^{L_t}[\eta](\gamma g) \\ 
&\hspace{-10mm}=  \sum_{\substack{\gamma \in \Exp(\fl_s/\fn_s) \\ \psi'\in \Psi'}}\,\int\limits_{[L_t/N_{H_t,\varphi}]} \hspace{-1em} \cF_{H_t, \varphi, \psi'}[\eta](\gamma ug) du 
= \hspace{-0.5em} \sum_{\substack{\gamma \in \Exp((\fg^{H_s}_1)^Z_{<0}) \\ \psi'\in \Psi'}}\,\,\int\limits_{[\Exp((\fg^{H_t}_1)^Z_{<0}(\A))]}\hspace{-2em} \cF_{H_t, \varphi, \psi'}[\eta](\gamma ug) du,\nonumber
\end{align}
where $\Psi' = (\lie g^*)^{H_t}_{-1} \cap (\lie g^*)^{e_\varphi} \cap (\lie g^*)^{Z}_{<0}$.
The last step follows from Lemma~\ref{lem:key}\ref{it:MaxIs}.
For all non-zero $\psi'\in \Psi'$ we run Algorithm \ref{alg:quasi} to express the quasi-Fourier coefficient 
$\cF_{H_t,\varphi,\psi'}$ in terms of Fourier coefficients, that, as we will show, correspond to higher orbits.
For the $\psi'=0$ term we iterate the same step as above using \eqref{=EDomin} and Algorithm~\ref{alg:quasi} but now with $s = t$ and $t$ being the next quasi-critical point, until we reach $t = 1$.

\end{mainalgorithm}

\begin{prop}\label{prop:dominTer}
    Algorithm \ref{alg:domin} terminates in a finite number of steps and produces an expression of the form
    \begin{equation}
                \cF_{H,\varphi}[\eta](g) = \mathcal{M}_H^S(\mathcal{F}_{S,\varphi}[\eta]) + \mathcal{A}, 
\end{equation}
where  
\begin{equation}
    \mathcal{M}_H^S(\mathcal{F}_{S,\varphi}[\eta]) = \sum_{w\in \Omega}\,\,\, 
            \intl_{V} \intl_{ {[ U]}} \cF_{S,\varphi}[\eta](wvug) \, dudv \,
\end{equation}
which is called the ``main term'' in the introduction,
\begin{equation}
\label{UVW}
\begin{gathered}
 U=\Exp(\fu(\A)),\quad V=\Exp(\fv(\A)),\quad \Omega=\Exp(\fw(\K)), \\
  \fu:= (\fg^{H}_{>1})^S_1\cong (\fg^{H}_{>1})^S_{\geq1}/(\fg^{H}_{>1})^S_{>1}, \quad
  \fv:= (\fg^{H}_{>1})^S_{<1}\cong\fg^H_{> 1}/(\fg^{H}_{>1})^S_{\geq 1}, \\
  \fw:=(\fg^{H}_{1})^S_{<1}\cong \fg^H_{\geq 1}/\left(\fg^H_{> 1}+ (\fg^{H}_1)^S_{\geq 1}\right),
\end{gathered}
\end{equation}
  and $\mathcal{A}$ is a countable absolutely convergent sum of integral transforms of Fourier coefficients corresponding to orbits bigger than that of $\varphi$. 
\end{prop}
We prove this proposition in the next subsection.

\subsection{Proofs of correctness and termination algorithms, and Theorem~\ref{thm:IntTrans}}\label{subsec:Pfs}

We prove the following lemma in Appendix~\ref{sec:Geo}.
\begin{lemma}\label{lem:SameHOrbit}
    Let $(H,\varphi)$ be a Whittaker pair, and let $Z$ be a rational semi-simple element that commutes with $H$ and with $\varphi$. Let $\varphi'\in (\fg^*)^Z_{>0}\cap (\fg^*)^{S}_{2}$. 
    Then either $\dim {\bf G}(\C)(\varphi+\varphi')>\dim {\bf G}(\C)(\varphi)$, or 
 $\varphi$ is conjugate to $\varphi+\varphi'$ by the stabilizer of $H$ in $\Gamma$.
\end{lemma}

Let us now present the proofs that were postponed from the last subsection.

\begin{proof}[Proof of Lemma \ref{lem:quasiTer}]
In case \ref{it:EasyCase} of the algorithm it terminates in one step, and \eqref{=ES} expresses $\cF_{H,\varphi,\varphi'}$ in terms of $\cF_{H_T,\varphi}$, and $(H,\varphi)$ dominates $(H_T,\varphi)$. Thus assume that we are in case \ref{it:DiffCase}, and let $s\in (0,T]$ be the smallest critical number. 
Decompose $\varphi' = \psi + \varphi'' $ where $\psi \in (\lie g^*)^{H_s}_{-2}$ and $\varphi'' \in (\lie g^*)^{H_s}_{>-2}$.
Note that $\psi\in (\fg^*)^{Z}_{<0}$ and thus the orbit $\Gamma(\varphi+\psi)$ of $\varphi+\psi$ is bigger than or equal to the orbit $\Gamma\varphi$ of $\varphi$.

If $\Gamma(\varphi+\psi)=\Gamma\varphi$ then, by Lemma~\ref{lem:SameHOrbit}, 
$\varphi+\psi$ is conjugate to $\varphi$ under the stabilizer of $H_s$ in $\Gamma$. 
Then, by shifting the argument, we express each $\cF_{H_s,\varphi+\psi,\varphi''+\psi''}$ in terms of   $\cF_{H_s,\varphi,\varphi'''}$ for some $\varphi'''$.
Then we run the algorithm on $\cF_{H_s,\varphi,\varphi'''}$ and it terminates by induction on the finite number of critical values in the interval $(0,T)$.

If $\Gamma(\varphi+\psi)>\Gamma\varphi$ then, by Lemma~\ref{lem:SameHOrbit}, $\dim {\bf G}(\C)(\varphi+\psi)>\dim {\bf G}(\C)(\varphi)$, and thus the algorithm terminates by induction on $\dim {\bf G}(\C)(\varphi)$. 
\end{proof}

\begin{lem}\label{lem:quasi2high}
Let $(H,\varphi)$ be a Whittaker pair, 
and let $(e_{\varphi},h,f_{\varphi})$ be an $\sll_2$-triple such that $\varphi$ is given by Killing form pairing with $f_{\varphi},$ and $h$ commutes with $H$. Let 
$0 \neq \varphi' \in (\fg^*)^{H}_{>-2}\cap (\fg^*)^{H}_{\leq -1} \cap(\fg^*)^{e_\varphi}$. 
Then Algorithm \ref{alg:quasi} expresses the quasi-Fourier coefficient $\cF_{H,\varphi,\varphi'}$ in terms of Fourier coefficients $(S_i,\Phi_i)$ with $\Gamma \Phi_i>\Gamma \varphi$.
\end{lem} 

\begin{proof}
As in the algorithm, we let $Z:=H-h$, and for any $t\geq 0$ denote $H_t:=H+tZ$. Then $\varphi'$
decomposes to a sum of eigenvectors $\psi_i$ of $Z$, and each of these lies in $((\fg^*)^{e_{\varphi}})^H_{\leq -1}$. 
Since $(\fg^*)^{e_{\varphi}}\subset (\fg^*)^{h}_{\geq 0}$, we obtain $\psi_i\in ((\fg^*)^{h}_{\geq 0})^{Z}_{<0}$. 
Hence, while running Algorithm~\ref{alg:quasi}, some of the eigenvectors $\psi_i$ will join the space $(\fg^*)^{H_t}_{-2}$ for some $t>0$, 
leading to an orbit that is greater than, or equal to, that of $\varphi$.
To show that it is not equal to $\Gamma \varphi$ note that $\varphi+\psi_i$ lies in the Slodowy slice $\varphi+(\fg^*)^{e_{\varphi}}$, and thus its complex orbit has bigger dimension than that of $\varphi$. 
More Whittaker triples will be produced while running Algorithm \ref{alg:quasi},  but their third elements will lie in $(\fg^*)^{H_t}_{-1} \cap(\fg^*)^{e_\varphi}$ for some $t>0$ and thus lead to bigger orbits using the argument as above.
\end{proof}

\begin{proof}[Proof of Proposition \ref{prop:dominTer}]
As in the statement of Algorithm~\ref{alg:domin} let $Z:=S-H$ and $H_t:=H+tZ$.
If there are no quasi-critical points in the interval $(0,1)$ then $t=1$ and the algorithm terminates in one step.
In this case we can decompose the expression in the right-hand side of \eqref{=EDomin} as
\begin{align}
\sum_{\gamma \in \Exp((\fg^H_1)^S_{<1}) }\int_{[\Exp((\fg^S_1)^H_{>1}(\A))]} \cF_{H_T, \varphi, \psi'}[\eta](\gamma ug) du + \mathcal{A},
\end{align}
where 
\begin{align}
\mathcal{A}=\sum_{\substack{\gamma \in \Exp((\fg^H_1)^S_{<1}) \\ 0\neq \psi'\in \Psi'}}\int_{[\Exp((\fg^S_1)^H_{>1}(\A))]} \cF_{H_T, \varphi, \psi'}[\eta](\gamma ug) du\,.
\end{align}
By Lemma~\ref{lem:quasi2high}, for every non-zero $\psi'\in \Psi'=(\lie g^*)^{S}_{-1} \cap (\lie g^*)^{e_\varphi} \cap (\lie g^*)^{Z}_{<0}$
Algorithm~\ref{alg:quasi} expresses $\cF_{S,\varphi,\varphi+\psi'}$ in terms of Fourier coefficients $(S_i,\Phi_i)$ with $\Gamma \Phi_i>\Gamma \varphi$. Thus the term $\mathcal{A}$ satisfies the conditions of the proposition. 

Let us now assume that there exist quasi-critical points $0<t_1<\cdots <t_{n}<1$.
Lemma \ref{lem:quasi2high} again implies that all non-zero $\psi'\in \Psi''_i=(\lie g^*)^{H_{t_i}}_{-1} \cap (\lie g^*)^{e_\varphi} \cap (\lie g^*)^{Z}_{<0}$ will lead to Fourier coefficients corresponding to bigger orbits, that will be accumulated in~$\mathcal{A}$. 
It is left to track down the formula for the term that we get when we take all $\psi'$ in all the steps to be zero. Let
\begin{equation}
    \fv_{i}:=(\fg^{H_{t_i}}_{\geq 1}\cap \fg^{Z}_{<0})/(\fg^{H_{t_i}}_{>1}\cap \fg^{Z}_{<0}) \quad \text{ and } \quad V_i=\Exp(\fv_i) \, . 
\end{equation}
By Lemma~\ref{lem:step}\ref{it:easy} we have
\begin{equation}
    \cF^L_{H_{t_i},\varphi}[\eta](g)=\intl{V_i}\cF^R_{H_{t_{i}},\varphi}[\eta](v_ig) \, dv_i\,.
\end{equation}
Using  Lemma~\ref{lem:step}\ref{it:step} (retaining only $\psi' = 0$) we obtain 
\begin{equation}\label{=RInt2}
    \cF^R_{H,\varphi}[\eta](g)=\intl_{V_1}\dots\intl_{V_{n-1}} \intl_{V_n} \cF^L_{S,\varphi}[\eta](v_n\dots v_1g) dv +\mathcal{A}.
\end{equation}

Since $\fv=\bigoplus_{i=1}^n(\fg^{H_{t_i}}_1\cap \fg^{Z}_{<0})$, and as a commutative Lie algebra $\fg^{H_{t_i}}_1\cap \fg^{Z}_{<0}$ is naturally isomorphic to $\fv_i$, the group $V$ is glued from $V_i$. Thus 

\begin{equation}\label{=Fubini}
    \intl_{V_1}\dots\intl_{V_{n-1}} \intl_{V_n} \cF^L_{S,\varphi}[\eta](v_n\dots v_1g) dv = 
    \intl{V} \cF^L_{S,\varphi}[\eta](vg) \, dv \, .
\end{equation}
The proposition now follows from Lemma~\ref{lem:step}\ref{it:easy}. 
\end{proof}

\begin{proof}[Proof of Theorem \ref{thm:IntTrans}]
Under the assumption of Theorem \ref{thm:IntTrans}, for any $\Phi$ with $\Gamma \Phi>\Gamma \varphi,$ the neutral coefficient $\cF_{h,\Phi}[\eta]$ vanishes. By \cite[Theorem C]{GGS}, this implies the vanishing of $\cF_{S,\Phi}[\eta]$ for any pair $(S,\Phi)$ with $\Gamma \Phi>\Gamma \varphi$. Thus, the term $\mathcal{A}$ in Proposition \ref{prop:dominTer} also vanishes. The theorem follows.
\end{proof}

\subsection{Additional results}\label{subsec:more}
Let us prove for future use some additional results that utilise the technique of this section.
For the entire subsection we fix Whittaker pairs $(H,\varphi)$ and $(S,\varphi)$  such that $(H,\varphi)$ dominates $(S,\varphi)$.
Let $Z:=S-H$ and let $H_t:=H+tZ$.
Let $0<t_1<\dots<t_{n}<1$ be all the critical values between $0$ and $1$.
Let $t_0:=0$ and $t_{n+1}:=1$. Lastly,  for each $t_i$, let $R:=R_{t_i}$ and $L:=L_{t_i}$ be defined as in \eqref{=lt}.

\begin{lem}\label{lem:MaxStep}
 Let $\eta$ be an automorphic function such that $\Gamma \varphi\in \WS(\eta)$. Then 
we have $\cF^R_{H_{t_i},\varphi}[\eta]=\cF^L_{H_{t_{i+1}},\varphi}[\eta]$.
\end{lem}
\begin{proof}
Denote $H_{j}:=H_{t_{j}}$ for any $j$, and $\fc:=(\fg^*)^{H_{i+1}}_{-1}\cap (\fg^*)^{e_{\varphi}}\cap (\lie g^*)^Z_{<0}$.
Arguing as in the proof of Lemma \ref{lem:step}\ref{it:step}, we obtain \begin{align}\cF^R_{H_{i},\varphi}[\eta]=\sum_{\varphi'\in \fc }\cF^L_{H_{i+1},\varphi,\varphi'}[\eta].
\end{align}
We have to show that for any non-zero $\varphi'\in \fc$, we have $\cF^L_{H_{i+1},\varphi,\varphi'}[\eta]=0$.
But $\cF^L_{H_{i+1},\varphi,\varphi'}[\eta]$ is an integral of $\cF_{H_{i+1},\varphi,\varphi'}[\eta]$, which 
by Lemma \ref{lem:quasi2high} is expressed through coefficients $\cF_{S_j,\Phi_j}$ with $\Gamma \Phi_j>\Gamma\varphi$.
Since $\varphi\in \WS(\eta),$ $\cF_{S_j,\Phi_j}[\eta]\equiv 0$ for all $j$, and thus $\cF^L_{H_{i+1},\varphi,\varphi'}[\eta]\equiv0$.
\end{proof}

\begin{cor}\label{thm:BetterIntTrans}
 Let $\eta$ be an automorphic function such that $\Gamma \varphi\in \WS(\eta)$.
Let $\fv:= \fg^H_{> 1}/(\fg^H_{> 1}\cap \fg^{S}_{\geq 1})$, $\fv':= \fg^S_{> 1}/(\fg^S_{> 1}\cap (\fg^{H}_{\geq 1}+\fg_{\varphi}))$, $V:=\Exp(\fv(\A))$, and $V':=\Exp(\fv'(\A))$.
Then, 
\begin{equation}
  \cF^R_{H,\varphi}[\eta](g)=\int_V  \cF^L_{S,\varphi}[\eta](vg)dv\quad \text{ and }\quad \cF^L_{H,\varphi}[\eta](g)=\int_{V'}  \cF^R_{S,\varphi}[\eta](vg)dv\,.
\end{equation}
\end{cor}

\begin{proof}
With the notation introduced above, let
\begin{equation}
    \fv_{i}:=(\fg^{H_{t_i}}_{\geq 1}\cap \fg^{Z}_{<0})/(\fg^{H_{t_i}}_{>1}\cap \fg^{Z}_{<0}) \quad \text{ and } \quad V_i=\Exp(\fv_i) \, . 
\end{equation}
By Lemma~\ref{lem:step}\ref{it:easy} we have
\begin{equation}
    \cF^L_{H_{t_i},\varphi}[\eta](g)=\intl{V_i}\cF^R_{H_{t_{i}},\varphi}[\eta](v_ig) \, dv_i\,.
\end{equation}
By Lemma \ref{lem:MaxStep} we have $\cF^R_{H_{t_i},\varphi}[\eta]=\cF^L_{H_{t_{i+1}},\varphi}[\eta]$. Thus 
\begin{equation}\label{=Fubini2}
    \cF^R_{H,\varphi}[\eta](g)=\intl_{V_1}\dots\intl_{V_{n-1}} \intl_{V_n} \cF^L_{S,\varphi}[\eta](v_n\dots v_1g) dv = 
    \intl{V} \cF^L_{S,\varphi}[\eta](vg) \, dv \, .
\end{equation}

Similarly, let 
\begin{equation}
    \fv'_{i}:=(\fg^{H_{t_i}}_{\geq 1})^{Z}_{>0}/(\fg_{\varphi}\cap\fg^{H_{t_i}}_{1}+ (\fg^{H_{t_i}}_{>1})^{Z}_{>0})\cong (\fg^{H_{t_i}}_{>1})^Z_{>0}/((\fg^{H_{t_i}}_{>1})^Z_{>0}\cap \fg_{\varphi})   \text{ and }  V'_i=\Exp(\fv'_i) \, . 
\end{equation}
Then the nilpotent group $V'$ is glued from the commutative groups $V'_i$ and 
\begin{equation}\label{=Fubini3}
  \cF^L_{S,\varphi}[\eta](g)=\intl_{V'_{n}}\dots\intl_{V'_{2}} \intl_{V'_1} \cF^R_{H,\varphi}[\eta](v_1\dots v_{n}g) dv = 
    \intl{V'} \cF^L_{H,\varphi}[\eta](vg) \, dv \, . \qedhere
\end{equation}
\end{proof}

\begin{prop}[{cf. \cite[Theorem A]{GGS} for a local analogue}]\label{prop:domin}
    For any  $\eta \in C^{\infty}(\Gamma \backslash G)$ with $\cF_{H,\varphi}[\eta]\equiv 0$ we have $\cF_{S,\varphi}[\eta]\equiv 0$. 
\end{prop}

\begin{proof}
  Let $Z:=S-H$, and for any $t\geq 0$ let $H_t:=H+tZ$. Let $0 < t_1 < \dots < t_k < 1$ be all the critical values of $t$ between 0 and 1. Let $t_0:=0$ and $t_{k+1}:=1$. By Lemma \ref{lem:step}\ref{it:easy} and~\ref{it:step}, for any $0\leq i\leq k$, $\cF_{H_{t_{i+1}},\varphi}$ is expressed in terms of $\cF_{H_{t_i},\varphi}$.
  Since $H_{t_0}=H$, we obtain by induction that $\cF_{H_{t_i},\varphi}[\eta]\equiv 0$ for all $i$.
Since $H_{t_{k+1}}=S$, the proposition follows.
\end{proof}

\subsection{Levi-distinguished coefficients}\label{subsec:LeviDist}
Let us show that any Whittaker pair  $(H,\varphi)$ dominates a Levi-distinguished Whittaker pair.
Using Lemma \ref{lem:Z}, decompose $H=h+Z$, where $(h,\varphi)$ is a neutral pair, and $Z$ commutes with $h$ and $\varphi$.

\begin{notation}\label{not:z}
Let $C\subseteq \Gamma$ denote the centralizer of $(h,\varphi)$. Let $A$ denote a maximal split torus of $C$ such that its Lie algebra $\fa$ includes $Z$, and let $M$ denote the centralizer of $\fa$ in $G$. Then $M$ is a Levi subgroup of $G$, $\fm$ includes $h,Z$ and $\varphi$, and $\varphi$ is $\K$-distinguished in $\fm$. Let $z$ be a rational semi-simple element of $\fa$ that is generic in the sense that its centralizer is $M$.
\end{notation}

\begin{lemma} \label{lem:K-distinguished}
As an element of $\fm$, $\varphi$ is $\K$-distinguished.
\end{lemma}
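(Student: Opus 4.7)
The plan is to identify $\fm$ with the $\K$-Levi subalgebra produced by the construction in the proof of Lemma~\ref{lem:LeviConj} applied to the maximal $\K$-split torus $A$ of the centralizer $C$ of $(h,\varphi)$.

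First, since $(h,\varphi)$ is neutral, I complete $f:=f_\varphi$ to an $\sl_2$-triple $\gamma=(e,h,f)$ in $\fg$. A standard fact from $\sl_2$-representation theory — the operator $\ad(f):\fg^h_2\to\fg^h_0$ is injective, since on each finite-dimensional irreducible $\sl_2$-module $\ad(f)$ is injective on every weight space except the lowest — implies that $e$ is uniquely determined by the pair $(h,f)$. Applying this uniqueness to elements $g$ that centralize both $h$ and $f$ yields $\Ad(g)e=e$, so that the centralizer of $(h,f)$ in $\mathbf{G}$ equals $\mathbf{G}_\gamma$; in particular $C=\mathbf{G}_\gamma$, and $A$ is a maximal $\K$-split torus of $\mathbf{G}_\gamma$. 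Applying the same uniqueness argument to $[Y,e]\in\fg^h_2$ for $Y\in\fa$ (using that $\fa$ centralizes both $h$ and $f$, so that $[[Y,e],f]=0$) gives $[Y,e]=0$, and hence the entire triple $\gamma$ lies in $\fm$.

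Next, I argue by contradiction: assume $\varphi$ is not $\K$-distinguished in $\fm$, so that $f$ lies in a proper $\K$-Levi subalgebra of $\fm$. Applying Lemma~\ref{lem:LeviConj} inside the reductive group $M$, I may shrink this subalgebra and conjugate $\gamma$ into it by an element of the centralizer of $f$ in $M$ to produce a proper $\K$-Levi $\fl\subsetneq\fm$ such that $\gamma\subseteq\fl$ and $f$ is $\K$-distinguished in $\fl$. Let $L\subseteq M$ be the corresponding Levi subgroup and let $B$ be the maximal $\K$-split torus of its center. The argument in the proof of Lemma~\ref{lem:LeviConj}, run now inside $M$ in place of $\mathbf{G}$, shows that $B$ is a maximal $\K$-split torus of $M\cap\mathbf{G}_\gamma$.

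The contradiction is then immediate. On the one hand, $A\subseteq Z(M)\subseteq Z(L)$ forces $A\subseteq B$. On the other hand, $M\cap\mathbf{G}_\gamma\subseteq\mathbf{G}_\gamma=C$ and $A$ is a maximal $\K$-split torus of $C$, so $A$ is a maximal $\K$-split torus of $M\cap\mathbf{G}_\gamma$, forcing $A=B$. But then $L$ and $M$ are both the centralizer in $\mathbf{G}$ of the same torus $A=B$, so $L=M$, contradicting $L\subsetneq M$. The main (and only nontrivial) step is the first paragraph, where the uniqueness of the third element of an $\sl_2$-triple is used both to identify $C$ with $\mathbf{G}_\gamma$ and to place $e$ in $\fm$; everything after that is a bookkeeping application of the correspondence developed in the proof of Lemma~\ref{lem:LeviConj}.
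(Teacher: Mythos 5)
Your proof is correct and takes essentially the same route as the paper: conjugate the putative proper $\K$-Levi of $\fm$ by the centralizer of $f$ in $M$ so that it contains the neutral data, then use maximality of the split torus $A$ inside the centralizer of that data together with $M=Z_{\G}(A)$ to force $L=M$. The paper's version is slightly leaner, working directly with the rational semisimple element $z'$ that defines $\fl$ (it commutes with $h$ and $\varphi$, hence lies in $\fa\subseteq Z(\fm)$), so your preliminary steps (identifying $C=\G_\gamma$ via uniqueness of $e$, placing $e$ in $\fm$, and arranging $f$ to be $\K$-distinguished in $\fl$) are correct but not strictly needed.
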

\begin{proof}    
Let $\fl$ be the Lie algebra of a Levi subgroup of $M$ defined over $\K$ such that $\varphi\in \fl^*$. We have to show that $L=M$. By replacing $L$ by its conjugate we can assume $h\in \fl$, and that there exists a rational semi-simple element $z'\in \fm$ such that $\fl$ is the centralizer of $z'$. Then $z'$ commutes with $h$ and $\varphi$ and we have to show that $z'$ is central in $\fm$.  

Indeed, $z'\in \fm\cap \fc=\fa$. Now, any $X\in \fm$ commutes with $z$, and thus with any element of $\fa$, since $z$ is generic in $\fa$. Thus $\fa$  lies in the center of $\fm$ and thus $z'$ is central.
\end{proof}

Note that the eigenvalues of the adjoint action of any Lie algebra element are symmetric around zero.

\begin{notation}\label{not:Zprime}
Let $N$ be a positive integer that is bigger than the ratio of the maximal eigenvalue of $\ad(z)$ by the minimal positive eigenvalue of $\ad(Z)$. Let
\begin{equation}\label{=:Z'}
Z':=NZ+z. 
\end{equation}
\end{notation}
From our choice of $N$ we have 
\begin{equation}\label{=Z'}
\fg^{Z'}_{>0}=\fg^{Z}_{>0}\oplus(\fg^{Z}_{0}\cap \fg^{z}_{>0}) \text{ and } \fg^{Z'}_{0}=\fg^{z}_{0}=\fm\subseteq \fg^{Z}_{0}.
\end{equation}
That $\lie m \subseteq \lie g^Z_0$ follows from the fact that $M$ is the centralizer of $z$ which equals the centralizer of $\lie a$ and $\lie a$ includes $Z$.

\begin{lemma}\label{lem:Z'main}
For rational $T > 0$, $(H, \varphi)$ dominates $(H+TZ', \varphi)$, that is, $H,\varphi$ and $T Z'$ commute, and satisfy \eqref{=domin}.
\end{lemma}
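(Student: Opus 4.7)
The claim has two parts: commutation of $H$, $\varphi$ and $TZ'$, and the weight inclusion \eqref{=domin}.

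For the commutation part, I would simply unpack the definitions. Since $Z, z \in \fa \subseteq \fc$, and $\fc$ is by construction the centralizer of $(h, \varphi)$ in $\Gamma$, both $Z$ and $z$ commute with $h$ and with $\varphi$. Moreover $Z$ and $z$ lie in the abelian subalgebra $\fa$, hence commute with one another. Thus $Z' = NZ + z$ commutes with $h$, with $Z$, and with $\varphi$. Therefore $H = h + Z$ commutes with $Z'$, and $\varphi$ commutes with $Z'$; scaling by $T \in \Q_{>0}$ changes nothing.

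For \eqref{=domin} with $S - H = TZ'$, I have to show
\begin{equation}
    \fg_\varphi \cap \fg^H_{\geq 1} \subseteq \fg^{TZ'}_{\geq 0} = \fg^{Z'}_{\geq 0}.
\end{equation}
The key input is that, because $(h, \varphi)$ is a neutral pair, the centralizer $\fg_\varphi$ is spanned by lowest weight vectors of the associated $\sl_2$-triple, so $\fg_\varphi \subseteq \fg^h_{\leq 0}$ (this is exactly the observation used in the proof of the earlier lemma showing that $(H - Z, \varphi)$ dominates $(H, \varphi)$). Now $h$ and $Z$ commute, so $\fg$ admits a bigrading by $(h,Z)$-eigenvalues. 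Take $X \in \fg_\varphi \cap \fg^H_{\geq 1}$ and decompose $X = \sum_{a,b} X_{a,b}$ with $X_{a,b} \in \fg^h_a \cap \fg^Z_b$. Since $X \in \fg^h_{\leq 0}$, every nonzero component has $a \leq 0$; since $X \in \fg^{h+Z}_{\geq 1}$, every nonzero component satisfies $a + b \geq 1$, hence $b \geq 1$. Thus $X \in \fg^Z_{>0}$.

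To conclude, I invoke \eqref{=Z'}: the choice of $N$ was designed precisely so that $\fg^{Z'}_{>0} = \fg^Z_{>0} \oplus (\fg^Z_0 \cap \fg^z_{>0})$, and in particular $\fg^Z_{>0} \subseteq \fg^{Z'}_{>0} \subseteq \fg^{Z'}_{\geq 0}$. Hence $X \in \fg^{Z'}_{\geq 0} = \fg^{TZ'}_{\geq 0}$ as required. There is no real obstacle here: the proof is essentially a bookkeeping argument once one remembers the $\sl_2$-structural fact $\fg_\varphi \subseteq \fg^h_{\leq 0}$ and the defining property \eqref{=Z'} of $Z'$. The only minor point to double-check is that $TZ'$ gives the same nonnegative eigenspace as $Z'$ for $T > 0$, which is immediate.
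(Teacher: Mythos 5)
Your proposal is correct and follows essentially the same route as the paper: commutation is unpacked from the construction of $Z'$ (the paper phrases it via $h,Z,\varphi\in\fm$ commuting with $z$, you via $Z,z\in\fa\subseteq\fc$ with $\fa$ abelian, which amounts to the same thing), and the inclusion is the chain $\fg_\varphi\cap\fg^H_{\geq 1}\subseteq\fg^h_{\leq 0}\cap\fg^H_{\geq 1}\subseteq\fg^Z_{>0}\subseteq\fg^{Z'}_{>0}=\fg^{TZ'}_{>0}$, where you merely spell out the $(h,Z)$-bigrading step that the paper leaves implicit.
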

\begin{proof}
    By construction $H = h + Z$, $\varphi$ and $Z$ commute, and since $h, Z, \varphi \in \lie m$ they commute with $z$. Thus, $Z'$ commutes with $H$ and $\varphi$. Furthermore, 
$\fg_{\varphi}\cap \fg^H_{\geq 1}\subseteq \fg^h_{\leq 0}\cap \fg^H_{\geq 1}\subseteq  \fg^Z_{>0} \subseteq \fg^{Z'}_{>0} = \fg^{TZ'}_{>0}.$
\end{proof}

\begin{lemma}\label{lem:LeviDist}
For a fixed $\lambda \in \Q$, and a rational $T > 0$ large enough, 
\begin{equation}\label{=LeviDist}
\fg^{H+TZ'}_{> 1}=\fg^{H+TZ'}_{\geq 2}=\fg^{Z'}_{>0}\oplus(\fg^{Z'}_{0}\cap \fg^{H+TZ'}_{>1})=\fg^{Z'}_{>0}\oplus \fm^{h}_{\geq 2}\text{ and }\fg^{H+TZ'}_{ \lambda}=\fm^{H}_{\lambda}=\fm^{h}_{\lambda}.
\end{equation}
The Fourier coefficient $\mathcal{F}_{H+TZ', \varphi}$ is then Levi-distinguished.
\end{lemma}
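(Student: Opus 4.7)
The plan is to exploit the commutativity of $H$ and $Z'$: indeed $h\in\fm$ commutes with $\fa$ since $\fa$ lies in the centre of $\fm$, so $[h,Z]=[h,z]=0$, while $[Z,z]=0$ in the abelian $\fa$. This yields a joint eigenspace decomposition $\fg = \bigoplus_{\mu,\nu} \fg^H_\mu \cap \fg^{Z'}_\nu$ on which $H+TZ'$ acts by the scalar $\mu + T\nu$. Only finitely many pairs $(\mu,\nu)$ occur, so for $T$ sufficiently large I can guarantee $\mu + T\nu \geq 2$ whenever $\nu > 0$, $\mu + T\nu < 1$ whenever $\nu < 0$, and $\mu + T\nu \neq \lambda$ whenever $\nu \neq 0$.

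For such $T$, the subspace $\fg^{H+TZ'}_{>1}$ consists of exactly those summands with $\nu > 0$ (which together give $\fg^{Z'}_{>0}$) plus those with $\nu = 0$ and $\mu > 1$ (giving $\fg^{Z'}_0 \cap \fg^H_{>1}$), and the same argument gives $\fg^{H+TZ'}_{\geq 2} = \fg^{Z'}_{>0} \oplus (\fg^{Z'}_0 \cap \fg^H_{\geq 2})$. To identify these with $\fm^h_{\geq 2}$ I would invoke \eqref{=Z'} for $\fg^{Z'}_0 = \fm$, and observe that $Z\in \fa$ lies in the centre of $\fm$, so $\ad(H)|_\fm = \ad(h)|_\fm$; since $h$ sits in an $\sl_2$-triple its $\fm$-eigenvalues are integral, whence $\fm^h_{>1} = \fm^h_{\geq 2}$, yielding the full three-way equality. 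The claim $\fg^{H+TZ'}_\lambda = \fm^h_\lambda = \fm^H_\lambda$ follows by the same analysis, since $\mu + T\nu = \lambda$ forces $\nu = 0$ under our choice of $T$.

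For the Levi-distinguished conclusion I would rewrite $H + TZ' = h + W$ with $W := Z + TZ'$. For $T$ generic, $W$ is rational semisimple and its centraliser in $\fg$ equals $\fg^Z_0 \cap \fg^z_0 = \fm$. Moreover $\fg^W_{>0} = \fg^{Z'}_{>0}$: on $\fm = \fg^{Z'}_0$ the element $W$ agrees with $Z$, which acts as zero, while on each joint $(Z,z)$-eigenspace with non-zero $Z'$-eigenvalue the sign of the $W$-eigenvalue matches that of $Z'$ for $T$ large. The pair $(h,\varphi)$ is neutral for $\fm$ and $\varphi$ is $\K$-distinguished in $\fm$ by Lemma~\ref{lem:K-distinguished}, so the equalities \eqref{=LeviDist} are exactly the conditions of Definition~\ref{def:Levi-distinguished} with Levi $\fm$ and defining element $W$, making $\cF_{H+TZ',\varphi}$ Levi-distinguished.

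The main difficulty is purely bookkeeping: a single $T$ must simultaneously separate $\nu = 0$ from $\nu \neq 0$ relative to the cutoffs $1$, $2$, and the fixed $\lambda$, and must be generic enough that the centraliser of $W$ does not jump above $\fm$. Since only finitely many eigenvalues appear this is straightforward, but it is the reason the quantifier ``$T$ large enough'' cannot be removed from the statement.
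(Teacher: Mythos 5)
Your proposal is correct and follows essentially the same route as the paper: decompose along the $Z'$-grading (the paper) or the joint $(H,Z')$-eigenspaces (your more explicit bookkeeping), use $\fg^{Z'}_0=\fm\subseteq\fg^Z_0$ so that $H$ acts on $\fm$ as the neutral element $h$ (whence integral eigenvalues and $\fm^h_{>1}=\fm^h_{\geq 2}$), and verify Definition~\ref{def:Levi-distinguished} with Levi $\fm$ and defining element $Z+TZ'$, invoking Lemma~\ref{lem:K-distinguished} for $\K$-distinguishedness. Your explicit inequalities on $\mu+T\nu$ merely spell out the paper's ``for large enough $T$'' and add nothing essentially different.
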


\begin{proof}
    For large enough $T$, we have that $\lie g^{H+TZ'}_{>1} \cap \lie g^{Z'}_{<0} = \{0\}$ and $\lie g^{H+TZ'}_{>1} \cap \lie g^{Z'}_{>0} = \lie g^{Z'}_{>0}$. Thus $\lie g^{H+TZ'}_{>1} = \lie g^{H+TZ'}_{>1} \cap \bigl( \lie g^{Z'}_{<0} \oplus \lie g^{Z'}_0 \oplus \lie g^{Z'}_{>0} \bigr) = \lie g^{Z'}_{>0} \oplus \bigl( \lie g^{Z'}_0 \cap \lie g^{H+TZ'}_{>1} \bigr)$. Since $H = h + Z$ and $\lie g^{Z'}_0 = \lie m \subseteq \lie g^Z_0$ we have that $\lie g^{Z'}_0 \cap \lie g^{H+TZ'}_{>1} = \lie g^{Z'}_0 \cap \lie g^{h}_{>1}$ and since $h$ is neutral $\lie g^{h}_{>1} = \lie g^{h}_{\geq 2}$. Now $\lie g^{Z'}_0 = \lie m$ and thus, $\lie g^{H+TZ'}_{>1} = \lie g^{Z'}_{>0} \oplus \bigl( \lie g^{Z'}_0 \cap \lie g^{h}_{\geq2} \bigr) = \lie g^{Z'}_{>0} \oplus \lie m^h_{\geq2}$. 
    Doing the same manipulations for $\lie g^{H+TZ'}_{\geq2}$ one ends up with the same result, proving the equality $\lie g^{H+TZ'}_{>1} = \lie g^{H+TZ'}_{\geq2}$. 
 
    Now, for any fixed $\lambda \in \Q$ and a large enough $T$, we have that $\lie g^{H+TZ'}_\lambda = \lie g^{H}_\lambda \cap \lie g^{Z'}_0 = \lie g^{H}_\lambda \cap \lie m = \lie m^H_\lambda$. Again, since $H = h + Z$ and $\lie m \subseteq \lie g^Z_0$, we get that $\lie m^H_\lambda = \lie m^h_\lambda$.

    Since $H + TZ' = h + Z + TZ'$, the semi-simple element denoted by $Z$ in 
    Definition~\ref{def:Levi-distinguished} is here $Z + TZ'$, which, for large enough $T$ has the centralizer $\lie g^Z_0 \cap \lie g^{Z'}_0 = \lie g^{Z'}_0 = \lie m$. By Lemma~\ref{lem:K-distinguished}, $\varphi$ is $\K$-distinguished in $\lie m$. Since $\lie g^Z_{>0} \subseteq \lie g^{Z'}_{>0}$ we have that $\lie g^{Z+TZ'}_{>0} = \lie g^{Z'}_{>0}$ and thus \eqref{=LeviDist} implies \eqref{=LeviDist0} which means that $\cF_{H+TZ', \varphi}$ is Levi-distinguished.
\end{proof}

\begin{cor}\label{cor:LeviDistDomin}
    Any Whittaker pair  $(H,\varphi)$ dominates a Levi-distinguished Whittaker pair.
\end{cor}

\begin{cor}\label{cor:algGood}
    Algorithm \ref{alg:domin} allows us to express any Fourier coefficient $\cF_{H,\varphi}$ in terms of Levi-distinguished Fourier coefficients with characters in orbits which are equal or bigger than $\Gamma\varphi$.
\end{cor}
\begin{proof}
Choose a Levi-distinguished Whittaker pair $(S,\varphi)$ dominated by $(H,\varphi)$. Then  Algorithm \ref{alg:domin} expresses 
$\cF_{H,\varphi}$ in terms of $\cF_{S,\varphi}$, and Fourier coefficients $\cF_{H'_i,\Phi_i}$ corresponding to higher orbits. Each of the pairs $(H'_i,\Phi_i)$ dominates a Levi-distinguished Whittaker pair $(S_i,\Phi_i)$. We repeat the procedure for each pair. The process terminates in a finite number of steps since the dimension of each complex orbit ${\bf G}(\C)\Phi_i$ is bigger than that of ${\bf G}(\C)\varphi$. 
\end{proof}

\begin{remark}\label{rem:alg}
\begin{enumerate}
\item Note that $\eta=\cF_{0,0}[\eta]$. Thus, the algorithm allows us to express any automorphic function in terms of its Levi-distinguished Fourier coefficients.
\item Algorithm \ref{alg:domin} produces a general formula, that holds for all automorphic functions $\eta \in C^{\infty}(\Gamma\backslash G)$.  However, if we put additional assumptions on $\eta$ the algorithm might terminate earlier and produce a shorter expression.
\item \label{itm:levidist-PL} By Lemma \ref{lem:WhitPL}, the Levi-distinguished Fourier coefficients of PL elements are Whittaker coefficients. This implies that if all Levi-distinguished Fourier coefficients of some automorphic function $\eta$ corresponding to non-PL orbits vanish, then the algorithm allows us to express $\eta$ in terms of its Whittaker coefficients.

\item Vanishing as in~\ref{itm:levidist-PL} happens in two important cases. One is the case of $\GL_n$ in which all orbits are PL orbits. We explain the results in this case in \S\ref{subsec:GL} below. Another case is the case when $G$ is simply-laced, and $\eta$ is minimal or next-to-minimal. In this case the output of the algorithm is analyzed in great detail in \cite{Part2}.
\end{enumerate}
\end{remark}

Let us now go to the other extreme and consider cuspidal $\eta$.

\begin{lem}\label{lem:cusp}
Let $\eta\in C^{\infty}(\Gamma\backslash G)$, and assume that the constant term $c_U(\eta):=\int_{[U]}\eta(u)du$ vanishes for any   $U\subset G$ which is a unipotent radical of a proper parabolic subgroup. Let $\cF_{S,\varphi}(\eta)$ be a non-vanishing Levi-distinguished Fourier coefficient. Then the orbit $\Gamma\varphi\in \fg^*$ is $\K$-distinguished.
\end{lem}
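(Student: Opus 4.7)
Write $S = h + Z$ as in Definition~\ref{def:Levi-distinguished}, with $\fl := \fg^Z_0$ the Levi subalgebra, $(h,\varphi)$ a neutral Whittaker pair for $\fl$, and $\varphi$ $\K$-distinguished in $\fl^*$. Let $P \subseteq G$ be the parabolic subgroup with Lie algebra $\fg^Z_{\geq 0}$ and let $U$ be its unipotent radical, so that $U$ has Lie algebra $\fg^Z_{>0}$ and is defined over $\K$. My strategy is to show that $\cF_{S,\varphi}[\eta]$ factors through the constant term
\begin{equation*}
\eta_U(g) := \intl_{[U]} \eta(ug)\, du\,;
\end{equation*}
the cuspidality hypothesis then forces $U$ to be trivial, whence $\fl = \fg$ and the conclusion follows directly from the Levi-distinguished definition.

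First, using \eqref{eq:N_Sphi} together with the eigenspace identities \eqref{=LeviDist0}, I would verify the decomposition
\begin{equation*}
\fn_{S,\varphi} = \fg^Z_{>0} \,\oplus\, \fn^{\fl}_{h,\varphi},
\end{equation*}
where $\fn^{\fl}_{h,\varphi} := \fl^h_{\geq 2} \oplus (\fl^h_1 \cap \fl_\varphi)$ is the analogous nilpotent subalgebra computed inside $\fl$. Since $\ad^*(Z)\varphi = 0$, the functional $\varphi$ vanishes on $\operatorname{Im}(\ad Z) = \fg^Z_{\neq 0}$, hence in particular on $\fg^Z_{>0}$, so the character $\chi_\varphi$ is trivial on $U$.

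Combining these observations, $N_{S,\varphi} = U \cdot N^L_{h,\varphi}$ as a semidirect product inside $G$, and Fubini gives
\begin{equation*}
\cF_{S,\varphi}[\eta](g) \;=\; \intl_{[N^L_{h,\varphi}]} \eta_U(ng)\,\chi_\varphi(n)^{-1}\, dn \;=\; \cF^L_{h,\varphi}[\eta_U](g).
\end{equation*}
If $Z$ is not central in $\fg$, equivalently $\fg^Z_{>0} \neq 0$, then $P$ is a proper $\K$-parabolic subgroup of $G$; by the cuspidality of $\eta$ we would have $\eta_U \equiv 0$, and consequently $\cF_{S,\varphi}[\eta] = 0$, contradicting non-vanishing. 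Hence $\fg^Z_{>0} = 0$, so $\fl = \fg$, and by Definition~\ref{def:Levi-distinguished} the orbit $\Gamma\varphi$ is $\K$-distinguished in $\fg^*$.

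The only step with any content is the explicit identification of $\fn_{S,\varphi}$: the summand $\fg^Z_{>0} \subseteq \fn_{S,\varphi}$ is immediate from the first clause of \eqref{=LeviDist0} (which contains $\fg^Z_{>0}$ in $\fg^S_{>1}$ and on which $\varphi$ vanishes automatically), and the complementary piece $\fn^{\fl}_{h,\varphi}$ is extracted using $\fl^h_1 \cap \fg_\varphi = \fl^h_1 \cap \fl_\varphi$ together with $\fl^h_{\geq 2} \subseteq \fg^S_{\geq 2}$. Everything else is a formal consequence of the cuspidality hypothesis.
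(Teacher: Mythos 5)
Your argument is correct and is essentially the paper's own proof: the paper likewise writes $S=h+Z$, uses the Levi-distinguished eigenvalue identities to see that $\cF_{S,\varphi}[\eta]=\cF_{h,\varphi}\bigl(c_U(\eta)\bigr)$ with $U$ the unipotent radical of the parabolic $P$ attached to $\fg^Z_{\geq 0}$, and concludes from cuspidality and non-vanishing that $P=G$, so $\fl=\fg$ and $\Gamma\varphi$ is $\K$-distinguished. Your explicit verification of $\fn_{S,\varphi}=\fg^Z_{>0}\oplus\fn^{\fl}_{h,\varphi}$ and of the triviality of $\chi_\varphi$ on $U$ merely spells out the step the paper cites from its definition.
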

\begin{proof}
Recall that by Definition \ref{def:Levi-distinguished}, there is a decomposition $S=h+Z$ such that  $(h,\varphi)$ be a neutral Whittaker pair for $\fl:=\fg^Z$, the orbit of $\varphi$ in $\fl^*$ is $\K$-distinguished, and 
\begin{equation}\label{=LeviDistPf}
\fg^{h+Z}_{> 1}=\fg^{h+Z}_{\geq 2}=
\fg^{Z}_{>0}\oplus \fl^{h}_{\geq 2}\text{ and }\fg^{h+Z}_{ 1}=\fl^{h}_{1}.
\end{equation}
Let $\fp:=\fg^Z_{\geq 0}$, $P$ be the corresponding parabolic subgroup, and $U$ be the unipotent radical of $P$. By \eqref{=LeviDistPf}, $\cF_{S,\varphi}(\eta)=\cF_{h,\varphi}(c_U(\eta))$, where we view $c_U(\eta)$ as an element of $C^{\infty}(\Gamma\backslash G)$. Since $\cF_{S,\varphi}(\eta)$ does not vanish, neither does $c_U(\eta)$ and thus $P=G$. Thus $L=G$ and thus the orbit $\Gamma\varphi\in \fg^*$ is $\K$-distinguished.
\end{proof}

\section{Applications and examples} \label{sec:examples}

In this section we will illustrate how to apply the framework introduced in this paper to compute certain Fourier coefficients in detail. We begin in \S \ref{subsec:Heis} to consider the case when $G$ is split and simply-laced and $P\subset G$ is a parabolic subgroup with unipotent radical $U$ isomorphic to a Heisenberg group. We use Algorithm \ref{alg:domin} to express any automorphic function on $G$ in terms of its Fourier coefficients with  respect to $U$. In \S \ref{subsec:Whit3} we then give an example of a Whittaker triple and a quasi-Fourier coefficient for the group $G=\SL_4$. In \S \ref{subsec:GL} we demonstrate Algorithm \ref{alg:domin}, Corollary \ref{cor:algGood} and Remark \ref{rem:alg} for $G= \GL_n$. In \S\ref{subsec:Sp4} we demonstrate them for $G= \Sp_4$. 

In \cite{Part2} we apply Theorem \ref{thm:IntTrans}, Algorithm \ref{alg:domin}, and Proposition \ref{prop:Heis} below to small automorphic forms on all simple split simply-laced  groups.

As many examples below are built on classical groups, we shall use matrix notation and denote by $e_{ij}$ the elementary matrix with a $1$ at position $(i,j)$ and zeroes elsewhere.

\subsection{Fourier expansions along Heisenberg parabolics}\label{subsec:Heis}

Let $G$ be split and simply-laced, and let  $\fh\subset \fg$ be the Lie algebra of a maximal split torus.  Fix a choice of positive roots. For any simple root $\alp$ define $S_{\alp}\in \fh$ by $\alp(S_{\alp}):=2$ and $\beta(S_{\alp})=0$ for any other simple root $\beta$. 

\begin{defn}
We say that a simple root  $\alpha$ is a Heisenberg  root  if $\lie g^{S_\alpha}_{>0}$ is a Heisenberg Lie algebra, or, equivalently, if $g^{S_\alpha}_{4}$ has dimension one.
\end{defn}

\begin{lemma}
If $\fg$ is simple of type $A_n$, there are no Heisenberg roots. If $\fg$ is simple of type $D_n$ or $E_n$ then there exists a unique Heisenberg root, and this is the unique simple root satisfying $\langle \alp, \alp_{\max}\rangle=1,$ where $\alp_{\max}$ denotes the highest root. 
\end{lemma}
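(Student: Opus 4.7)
The plan is to reformulate the Heisenberg condition as a combinatorial statement about the highest root. Observe that $\fg^{S_\alpha}_{2k}$ is spanned by the root spaces $\fg_\beta$ for positive roots $\beta$ whose simple-root expansion has $\alpha$-coefficient $k$. Since $\alpha_{\max}$ dominates every positive root in the coefficient-wise partial order, the top eigenvalue of $\ad(S_\alpha)$ on $\fg^{S_\alpha}_{>0}$ is $2m_\alpha$, where $m_\alpha$ denotes the $\alpha$-coefficient of $\alpha_{\max}$. Hence $\dim\fg^{S_\alpha}_4 = 1$ is equivalent to the conjunction $m_\alpha = 2$ and that $\alpha_{\max}$ is the unique positive root with $\alpha$-coefficient $2$, which also explains the equivalence stated in the definition above.

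With this reformulation in hand, I treat the three families. For $A_n$ we have $\alpha_{\max} = \alpha_1 + \cdots + \alpha_n$, so $m_\alpha = 1$ for every simple root and there is no Heisenberg root. For $D_n$, realized so that $\alpha_{\max} = e_1 + e_2$, the simple roots with $m_\alpha = 2$ are $\alpha_2, \ldots, \alpha_{n-2}$. A brief enumeration of the positive roots $e_i \pm e_j$ in terms of the simple roots shows that among these candidates only $\alpha_2$ has $\alpha_{\max}$ as its unique coefficient-$2$ root, while every $\alpha_j$ with $j \geq 3$ also picks up, for instance, $e_1 + e_j$ with $\alpha_j$-coefficient $2$. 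For $E_6, E_7, E_8$, a parallel tabulation (or invocation of the well-known Heisenberg parabolic constructions attached to Levis of type $A_5 \times \GL_1$, $D_6 \times \GL_1$, $E_7 \times \GL_1$ respectively) identifies the unique Heisenberg root as $\alpha_2$, $\alpha_1$, and $\alpha_8$ in Bourbaki labeling.

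I then match these Heisenberg roots with the inner-product characterization through the extended Dynkin diagram. For a simple root $\alpha \neq \alpha_{\max}$ in the simply-laced case, $\langle \alpha, \alpha_{\max} \rangle \in \{0, 1\}$ (the value $-1$ would produce the larger root $\alpha + \alpha_{\max}$), and it equals $1$ precisely when $\alpha$ is joined by an edge to the affine node $\alpha_0 = -\alpha_{\max}$ in the extended diagram (adjacency in the affine diagram encodes inner product $-1$ between the labels). Reading off the classical pictures, $\tilde A_n$ is a cycle in which $\alpha_0$ has two neighbors, while $\tilde D_n$ and $\tilde E_{6,7,8}$ each attach $\alpha_0$ to a single node, and this node is precisely the Heisenberg root identified in the previous paragraph.

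The main obstacle I expect is verifying uniqueness in the exceptional types without a lengthy root enumeration. The most conceptual route is to note that removing a simple root $\alpha$ from the extended Dynkin diagram yields the Dynkin diagram of the Levi of the standard parabolic associated to $\alpha$ together with the possibly-detached vertex $\alpha_0$; the vertex $\alpha_0$ becomes isolated if and only if $\alpha$ is its unique neighbor, and in that situation $\alpha_{\max}$ is the only weight sitting at the top of the $S_\alpha$-grading, so $\fg^{S_\alpha}_{>0}$ is Heisenberg. In all other configurations the $\alpha$-coefficient of $\alpha_{\max}$ either exceeds $2$ or is shared with other positive roots, so the nilradical is not Heisenberg. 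Combined with the explicit extended diagrams of $E_{6,7,8}$, this settles all cases.
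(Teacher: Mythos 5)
Your argument is correct and follows essentially the same route as the paper: both reduce the Heisenberg condition to the combinatorics of the $\alpha$-coefficient of $\alp_{\max}$ (equivalently, one-dimensionality of $\fg^{S_\alp}_4$), characterize the candidates via $\langle \alp,\alp_{\max}\rangle=1$, i.e.\ adjacency to the affine node in the extended Dynkin diagram, and finish by inspecting types $A_n$, $D_n$, $E_n$. The only difference is organizational (you identify the Heisenberg roots type by type and then match the inner-product criterion, while the paper deduces $\langle\alp,\alp_{\max}\rangle=1$ directly from $\dim\fg^{S_\alp}_4=1$ and then checks the affine diagrams), and both leave the final ``these roots are indeed Heisenberg'' verification at the same level of case-checking detail.
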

\begin{proof}
Let $\fg$ be simple and let $\alp$ be a Heisenberg root. Then $g^{S_\alpha}_{4}$ has to be the highest weight space of the adjoint representation, {\it i.e.} the root space of $\alp_{\max}$. Since $g^{S_\alpha}_{4}$ is one-dimensional, $\alp_{\max}-\beta$ is not a root for any simple root $\beta\neq \alp$. Thus $\alp_{\max}-\alp$ is a root, and thus $\langle \alp, \alp_{\max}\rangle=1$.
The roots $\alp$ with this property are precisely the nodes in the affine Dynkin diagram, that are connected to the affine node (corresponding to $-\alp_{\max}$).

Checking the affine Dynkin diagrams (see \cite[Tables IV-VII]{Bou4_6}), we see that there is a unique simple root $\alp$ with this property in types $D_n$ and $E_n$, and these roots are indeed Heisenberg. In the Bourbaki notation, these roots are $\alp_2$ for $D_n$ and $E_6$, $\alp_1$ for $E_7$ and $\alp_8$ for $E_8$. In type $A_n$  there are two roots with this property but none of them is Heisenberg. In fact,    $\lie g^{S_\beta}_{>0}$ is abelian for any simple root $\beta$ in type $A_n$. This is so, since in type $A_n$, $\alp_{\max}$ is the sum of all simple roots (with all coefficients being 1).
\end{proof}

\begin{notn}
Let $\alp$ be a 
root.   Define $h_{\alp}:=\alp^{\vee}\in \fh$  by requiring for all roots $\beta$ 
\begin{equation}\label{=h}
\beta(h_{\alp})=2\frac{\langle \alp,\beta\rangle}{\langle \alp,\alp \rangle}=\langle \alp,\beta\rangle\,.
\end{equation}
Denote also by $\fg^{\times}_{-\alp}$  the set of non-zero covectors in the dual root space $\fg^{*}_{-\alp}$.
\end{notn}
Note that for $\beta \neq \pm \alp,$ $\beta(h_{\alp})\in \{-1,0,1\}$.
By \cite[Proposition II.8.3]{Hum}, $(h_{\alp},\varphi)$ is a neutral pair for any $\varphi\in \fg^{\times}_{-\alp}$.

\begin{notn}
For any Heisenberg root $\alp$, let $\Omega_{\alp}\subset \Gamma$ be the abelian subgroup obtained by exponentiation of the abelian Lie algebra given by the direct sum of the root spaces of negative roots $\beta$ satisfying $\langle \alp, \beta\rangle=1$. Let 
\begin{equation}
\Psi_\alpha :=\{ \text{ root } \eps \mid \langle \eps, \alp \rangle \leq 0,  \eps(S_{\alp})=2\}.
\end{equation}
\end{notn}
Note that all the roots in $\Psi_{\alp}$ have to be positive.
  
In this subsection we use Algorithm \ref{alg:domin} to deduce the following proposition, that will be used in the sequel paper \cite{Part2}.

\begin{prop}\label{prop:Heis}
Let $\alp$ be a Heisenberg root. Let $\gamma_{\alp}\in \Gamma$ be a representative of a Weyl group element that conjugates $\alp$ to $\alp_{\max}$, where $\alp_{\max}$ denotes the maximal root of the component of $\fg$ corresponding to $\alp$. Then we have
\begin{equation}\label{=Heis}
\eta(g)=\sum_{\varphi\in (\fg^*)^{S_{\alp}}_{-2}}\cF_{S_{\alp},\varphi}[\eta](g)+\sum_{\varphi\in \fg^{\times}_{-\alp}}\sum_{\omega\in \Omega_{\alp}}\sum_{ \psi\in \bigoplus_{\eps\in \Psi_\alpha}\fg^*_{-\eps}}\cF_{S_{\alp},\varphi+\psi}[\eta](\omega \gamma_{\alp}g)\,.
\end{equation}
\end{prop}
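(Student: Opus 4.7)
The plan is to perform a Fourier expansion of $\eta$ along the Heisenberg unipotent $U = \Exp(\lie g^{S_\alpha}_{\geq 2})$, whose one-dimensional center is $Z = \Exp(\lie g_{\alpha_{\max}})$. First I would split according to the character of $\eta$ restricted to $[Z]$:
\begin{equation*}
\eta(g) \;=\; \int_{[Z]}\eta(zg)\,dz \;+\; \sum_{\psi \in \lie g^\times_{-\alpha_{\max}}} \eta_\psi(g), \qquad \eta_\psi(g) := \int_{[Z]}\eta(zg)\chi_\psi(z)^{-1}\,dz.
\end{equation*}
Since every $\varphi \in (\lie g^*)^{S_\alpha}_{-2}$ is supported on $(\lie g^{S_\alpha}_{2})^*$ and hence vanishes on $\lie g_{\alpha_{\max}} \subset \lie g^{S_\alpha}_4$, the character $\chi_\varphi$ is trivial on $Z$, so $\cF_{S_\alpha,\varphi}[\eta]$ depends only on the $Z$-constant term. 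Fourier inversion over the full Pontryagin dual of the compact abelian $[U/Z]$ then produces
$\int_{[Z]}\eta(z\cdot)dz = \sum_{\varphi \in (\lie g^*)^{S_\alpha}_{-2}} \cF_{S_\alpha,\varphi}[\eta]$, giving the first sum in \eqref{=Heis}.

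For the non-abelian piece, I would use Lemma~\ref{lem:conjugation-translation} with the Weyl representative $\gamma_\alpha$ (convention chosen so that $\Ad(\gamma_\alpha^{\pm 1}) \lie g_{\alpha_{\max}} = \lie g_\alpha$) to identify $\eta_\psi(g)$ with a Fourier coefficient at a translated point: setting $\varphi_1 := \Ad^*(\gamma_\alpha^{\pm 1})\psi \in \lie g^\times_{-\alpha}$, the change of variables $z \mapsto \gamma_\alpha^{-1} z \gamma_\alpha$ together with the automorphy of $\eta$ yields $\eta_\psi(g) = \cF_{h_\alpha,\varphi_1}[\eta](\gamma_\alpha g)$, where $(h_\alpha,\varphi_1)$ is the neutral Whittaker pair. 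A short root-string computation in simply-laced groups (for $\beta$ a root with $\langle\alpha,\beta\rangle = 1$, the string argument forces $\beta-\alpha$ to be a root, whence $e_\beta \notin \lie g_{\varphi_1}$) gives $\lie g^{h_\alpha}_1 \cap \lie g_{\varphi_1} = 0$ and $\lie g^{h_\alpha}_{\geq 2} = \lie g_\alpha$, so that $N_{h_\alpha,\varphi_1} = \Exp(\lie g_\alpha)$.

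The heart of the argument is then to prove the key identity
\begin{equation*}
\cF_{h_\alpha,\varphi_1}[\eta](h) \;=\; \sum_{\omega \in \Omega_\alpha} \sum_{\varphi_2 \in \bigoplus_{\eps \in \Psi_\alpha}\lie g^*_{-\eps}} \cF_{S_\alpha, \varphi_1 + \varphi_2}[\eta](\omega h).
\end{equation*}
I would verify this in two stages. First, set $\lie l' := \lie g_\alpha \oplus \bigoplus_{\eps > 0,\, n_\alpha(\eps) = 1,\, \langle \eps,\alpha\rangle = 1}\lie g_\eps$ and $R' := \Exp(\lie l' \oplus \lie g_{\alpha_{\max}})$. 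A direct check shows $R'$ is isotropic in $\lie u_{h_\alpha} := \lie g^{h_\alpha}_{\geq 1}$ under $\omega_{\varphi_1}$, and is in fact maximal, so Lemma~\ref{lem:StvN}\eqref{eq:F-as-FL} yields $\cF_{h_\alpha,\varphi_1}[\eta](h) = \sum_{\gamma \in [\Exp(\lie u_{h_\alpha})/R']} \cF^{R'}_{h_\alpha,\varphi_1}[\eta](\gamma h)$. The quotient Lie algebra equals $\bigoplus_{\beta < 0,\, \langle \alpha,\beta\rangle = 1} \lie g_\beta$, whose exponential over $\K$ is precisely $\Omega_\alpha$. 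Second, $\lie l = \bigoplus_{\eps \in \Psi_\alpha}\lie g_\eps$ is Lagrangian in $\lie g^{S_\alpha}_2$ complementary to $\lie l'$, and factoring the $[U]$-integral as an iterated integral over $[\Exp(\lie l)]$ and $[R']$, Fourier inversion on the compact abelian $[\Exp(\lie l)]$ collapses the sum $\sum_{\varphi_2 \in \lie l^*}$ to evaluation at the identity, giving $\sum_{\varphi_2} \cF_{S_\alpha,\varphi_1 + \varphi_2}[\eta](h) = \cF^{R'}_{h_\alpha,\varphi_1}[\eta](h)$. Combining these with the conjugation step produces the second sum in \eqref{=Heis}.

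The main obstacle is the precise combinatorial identification in the first stage: verifying that $R'$ is genuinely a maximal isotropic in $\lie u_{h_\alpha}$ and that the quotient matches $\Omega_\alpha$. This rests on the special structure of simply-laced Heisenberg parabolics, where the symplectic pairing on $\lie u_{h_\alpha}/(\lie g_\alpha \oplus \lie g_{\alpha_{\max}})$ takes the form $\beta \leftrightarrow \alpha - \beta$ and pairs positive roots in $\lie l'_+$ bijectively with negative roots in $\Lie(\Omega_\alpha)$; together with the Heisenberg condition that ensures $\alpha_{\max}$ is the unique root in $\lie g^{S_\alpha}_4$, this pins down the bookkeeping. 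Ambiguity in the direction of $\gamma_\alpha$ is absorbed into the convention choice.
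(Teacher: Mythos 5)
Your proposal is correct, and its overall skeleton is the same as the paper's: expand along the one-dimensional center of the Heisenberg unipotent, recover the first sum of \eqref{=Heis} by Fourier inversion on the abelianization, conjugate the non-constant modes by $\gamma_{\alp}$ via Lemma~\ref{lem:conjugation-translation} to reduce to the neutral pair $(h_{\alp},\varphi_1)$, and then establish the key identity expressing $\cF_{h_{\alp},\varphi_1}$ through $\sum_{\omega\in\Omega_{\alp}}\sum_{\psi}\cF_{S_{\alp},\varphi_1+\psi}$, which is exactly the paper's Lemma~\ref{lem:Heis}. Where you genuinely diverge is in the proof of that key identity. Your first stage coincides with the paper's: your $\fr'=\fg_{\alp}\oplus\fg_{\alp_{\max}}\oplus\bigoplus\{\fg_{\eps}:\eps>0,\ n_{\alp}(\eps)=1,\ \langle\eps,\alp\rangle=1\}$ is precisely the subalgebra $\fr_0$ of \eqref{=lt} for the deformation $Z=S_{\alp}-h_{\alp}$, and the sum over $\Omega_{\alp}$ is the same application of Lemma~\ref{lem:StvN}\eqref{eq:F-as-FL}; your root-string checks (maximal isotropy of $\fr'$, identification of the quotient with $\Omega_{\alp}$) reproduce by hand what Lemma~\ref{lem:key} packages in general. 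In the second stage the paper runs the deformation $(1-t)h_{\alp}+tS_{\alp}$ through its critical values $1/2$ and $2/3$, using Whittaker triples and Lemma~\ref{lem:step} to track which root spaces enter the character, whereas you do a single Fourier inversion along the complementary Lagrangian $\Exp\bigl(\bigoplus_{\eps\in\Psi_\alpha}\fg_{\eps}(\A)\bigr)$, exploiting that $\fr'$ is an ideal in $\fg^{S_{\alp}}_{\geq 2}$ with abelian quotient and that $\varphi_1$ kills $[\fl,\fr']\subseteq\fg_{\alp_{\max}}$ so the relevant function descends to the compact quotient. This is a more elementary, self-contained route: it avoids the quasi-critical bookkeeping and the triple formalism entirely, at the cost of the explicit combinatorial verifications you flag (which do go through in the simply-laced Heisenberg setting, since every root $\beta\neq\alp$ with $\langle\alp,\beta\rangle=1$ has $n_{\alp}(\beta)\geq 1$ if positive, so the pairing $\beta\leftrightarrow\alp-\beta$ matches $\fr'/\fg_{\alp}$ bijectively with $\Lie(\Omega_{\alp})$). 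The paper's deformation argument buys uniformity — it is the same machine used throughout \S\ref{sec:GenLemmas}–\S\ref{sec:levi-distinguished} and automatically records which characters $\psi$ can appear — while your argument makes the final formula's structure (choice of polarization, sum over a Lagrangian of characters) more transparent.
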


For the proof we will need the following lemma.
\begin{lem}\label{lem:Heis}
Let $\alp$ be a Heisenberg root. Then for any $\varphi \in \fg^{\times}_{-\alp}$ we have 
\begin{equation}\label{=Heis2}
\cF_{h_{\alp},\varphi}[\eta](g)=\sum_{\omega\in \Omega_{\alp}}\sum_{ \psi\in \bigoplus_{\eps\in \Psi_\alpha}\fg^*_{-\eps}}\cF_{S_{\alp},\varphi+\psi}[\eta](\omega g)\,.
\end{equation}
\end{lem}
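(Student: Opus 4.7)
The plan is to mimic the strategy of the proof of Theorem \ref{thm:IntTrans}, but to keep the extra Fourier terms that the Whittaker-support hypothesis would otherwise eliminate; these terms will produce the sum over $\psi$ on the right-hand side of \eqref{=Heis2}. Set $Z := S_\alpha - h_\alpha$, which commutes with both $h_\alpha$ and $\varphi \in \fg^*_{-\alpha}$, and form the deformation $H_t := h_\alpha + tZ$ interpolating from $H_0 = h_\alpha$ to $H_1 = S_\alpha$. First observe that $(h_\alpha, \varphi)$ dominates $(S_\alpha, \varphi)$: since $\fg_\varphi$ is spanned by lowest-weight vectors for the $\sll_2$-triple $(e_\alpha, h_\alpha, f_\alpha)$, we have $\fg_\varphi \subseteq \fg^{h_\alpha}_{\leq 0}$ and hence $\fg_\varphi \cap \fg^{h_\alpha}_{\geq 1} = 0 \subseteq \fg^Z_{\geq 0}$.

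Next I would identify the critical values of the deformation. Writing $k_\alpha(\beta)$ for the coefficient of $\alpha$ in a root $\beta$, one has $\beta(H_t) = (1-t)\langle\alpha,\beta\rangle + 2t\, k_\alpha(\beta)$ and $\beta(Z) = 2k_\alpha(\beta) - \langle\alpha,\beta\rangle$. A case-by-case analysis (using that $\alpha$ is Heisenberg and $\fg$ is simply-laced) shows that the roots $\delta$ entering the $1$-eigenspace of $H_{t_c}$ at a critical $t_c \in (0,1)$ are precisely those with $k_\alpha(\delta) = 1$ and $\langle\alpha,\delta\rangle \in \{0, -1\}$, i.e.\ $\delta \in \Psi_\alpha$. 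For each such $\delta$ one verifies that $\fg^*_{-\delta} \subseteq (\fg^*)^{H_{t_c}}_{-1} \cap (\fg^*)^{e_\alpha} \cap (\fg^*)^Z_{<0}$: the potential root $\delta - \alpha$ has $\langle\alpha, \delta - \alpha\rangle \in \{-2, -3\}$ and is therefore not a root (the only root with $\langle\alpha, \cdot\rangle = -2$ is $-\alpha$), so centralization by $e_\alpha$ is automatic, and $\delta(Z) = 2 - \langle\alpha, \delta\rangle \in \{2, 3\} > 0$.

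Now transport $\cF_{h_\alpha, \varphi}$ through the deformation. The unfolding identity \eqref{eq:F-as-FL} of Lemma \ref{lem:StvN} at $t = 0$, combined with the isomorphism $\fu_0/\fr_0^\bot \cong \fl_0/\fn_0$ from Lemma \ref{lem:key}\ref{it:MaxIs}, gives
\begin{equation*}
  \cF_{h_\alpha, \varphi}[\eta](g) = \sum_{\omega \in \Omega_\alpha} \cF^{R_0}_{h_\alpha, \varphi}[\eta](\omega g),
\end{equation*}
since a direct computation shows $\fl_0/\fn_0 = \fg^{h_\alpha}_{\geq 1} \cap \fg^Z_{<0} = \bigoplus_{\beta < 0,\, \langle\alpha,\beta\rangle = 1} \fg_\beta$; the only candidate positive roots would be those with $k_\alpha(\beta) = 0$ and $\langle\alpha, \beta\rangle = 1$, and these do not exist because writing such a $\beta = \sum_{\gamma \neq \alpha} c_\gamma \gamma$ with $c_\gamma \geq 0$ gives $\langle\alpha, \beta\rangle = -\sum_{\gamma \sim \alpha} c_\gamma \leq 0$. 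Across each critical value $t_i$, in place of the $\WS$-vanishing from Lemma \ref{lem:MaxStep}, use the full Fourier expansion from the proof of Lemma \ref{lem:step}\ref{it:step},
\begin{equation*}
  \cF^{R_{t_i}}_{H_{t_i}, \varphi, \varphi'} = \sum_{\psi'} \cF^{L_{t_{i+1}}}_{H_{t_{i+1}}, \varphi, \varphi' + \psi'},
\end{equation*}
with $\psi'$ summed over the relevant $\Psi_\alpha$-supported space, and combine with \eqref{eq:swap} to pass from $\cF^L$ back to $\cF^R$ at each $t_i$. A key simplification specific to the simply-laced Heisenberg setting is that the group $V_{t_i}$ appearing in \eqref{eq:swap} is trivial, because every $\delta$ with $\delta(H_{t_i}) = 1$ satisfies $\delta(Z) > 0$, so $\fg^{H_{t_i}}_1 \cap \fg^Z_{<0} = 0$. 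Iterating through the critical values produces, at $t = 1$, the expression $\sum_\omega \sum_\psi \cF^{L_1}_{S_\alpha, \varphi, \psi}[\eta](\omega g)$ with $\psi$ ranging over $\bigoplus_{\delta \in \Psi_\alpha} \fg^*_{-\delta}$; and since $\fl_1 = \fn_1 = \fn_{S_\alpha, \varphi + \psi}$ and $\psi \in (\fg^*)^{S_\alpha}_{-2}$, this coefficient equals $\cF_{S_\alpha, \varphi + \psi}[\eta](\omega g)$. No further expansion occurs at $t = 1$ because $(\fg^*)^{S_\alpha}_{-1} = 0$ (the $S_\alpha$-eigenvalues on $\fg^*$ are even).

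The main obstacle I anticipate is the careful formulation of the critical-value Fourier expansion without the $\WS$ hypothesis, and verifying that the accumulated contributions $\psi'_i$ from the several critical values combine into exactly a single sum over $\bigoplus_{\delta \in \Psi_\alpha} \fg^*_{-\delta}$, so that the $\Omega_\alpha$-sum on the right-hand side of \eqref{=Heis2} is produced cleanly by the single unfolding step at $t = 0$, without interference from the intermediate unfoldings at critical values.
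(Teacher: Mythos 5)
Your proposal is correct and follows essentially the same route as the paper's proof: the deformation $(1-t)h_{\alp}+tS_{\alp}$, the initial unfolding via Lemma \ref{lem:StvN} producing the sum over $\Omega_{\alp}$, the Fourier expansions at the critical values (which you describe by the root conditions $k_\alpha(\delta)=1$, $\langle\alp,\delta\rangle\in\{0,-1\}$, i.e.\ exactly $t=1/2$ and $t=2/3$ as in the paper) contributing the characters supported on $\Psi_\alpha$, and the observation that at $t=1$ the Whittaker triple becomes the pair $(S_{\alp},\varphi+\psi)$. Your extra verifications (triviality of the swap groups $V_{t_i}$ and the identification of $\fl_0/\fn_0$ with the Lie algebra of $\Omega_{\alp}$) are correct and only make explicit what the paper's terser argument uses implicitly.
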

\begin{proof}
Following Algorithm \ref{alg:domin} we
consider the deformation $(1-t)h_{\alp}+tS_{\alp}$.
By Lemma \ref{lem:StvN}, we have 
\begin{equation}
\cF_{h_{\alp},\varphi}[\eta](g)=\sum_{\omega\in \Omega_{\alp}}\cF^{R_0}_{h_{\alp},\varphi}[\eta](\omega g)\,.
\end{equation}
Then, the critical values are $1/2$ and $2/3$, and the quasi-critical values are $1/3$ and $1$. 
At $1/3$,  we have no Whittaker triple entries yet and thus nothing  moves into the $-2$-eigenspace. At $1/2$, we get contributions  in the third component of the Whittaker triple from the root spaces of all the roots $\eps$ with $\langle \eps, \alp \rangle = 0$ and $\eps(S_\alp) =- 2$.
At $t=2/3$ we also get all the negative roots with 
$ \langle \eps, \alp \rangle = 1$ and $\eps(S_\alp) =- 2$. This means that we would get contributions from all these root spaces in the third component of the Whittaker triple. 
At $t=1$  the Whittaker triple becomes a Whittaker pair and thus we obtain
\begin{equation}
\cF_{h_{\alp},\varphi}[\eta](g)=\sum_{\omega\in \Omega_{\alp}}\cF^{R_0}_{h_{\alp},\varphi}[\eta](\omega g)=
\sum_{\omega\in \Omega_{\alp}}\sum_{ \psi\in \bigoplus_{\eps\in \Psi_\alpha}\fg^*_{-\eps}}\cF_{S_{\alp},\varphi+\psi}[\eta](\omega g)\,.
\end{equation}
\end{proof}
Since $\eta=\cF_{0,0}[\eta],$ in order to express $\eta$ in terms of Fourier coefficients of the form $\cF_{S_{\alp},\varphi}$ we need to consider the deformation $S_t:=tS_{\alp}$. To simplify the exposition we do that in more elementary terms.
\begin{proof}[Proof of Proposition \ref{prop:Heis}]
By the conditions, the Lie algebra $\fg^{S_{\alp}}_{>0}$ is a Heisenberg Lie algebra, with center $\fg^{S_{\alp}}_{4}$, and abelian quotient $\fg^{S_{\alp}}_{2}$.
We restrict $\eta$ to the exponential of the center and decompose to Fourier series. The constant term with respect to the  center $\fg^{S_{\alp}}_{4}$ is  $\cF_{S_{\alp}/3,0}[\eta]$, and the other terms  are $\cF_{S_{\alp}/2,\varphi}[\eta]$ for $\varphi\neq 0\in (\fg^*)^{S_{\alp}}_{-4} =(\fg^*)^{S_{\alp}/2}_{-2}$.  We remark that this constant term can be denoted $\cF_{cS_{\alp},0}[\eta]$ for any $1/4 \leq c<1/2$ but not for $c=1/2$ since $0$ defines a zero form on the 1-eigenspace, and thus $\fn_{cS_{\alp},0}=\fg^{cS_{\alp}}_{\geq 1}=\fg^{S_{\alp}}_{\geq c^{-1}}$ and $\fn_{S_{\alp}/2,0}=\fg^{S_{\alp}}_{\geq 2}$. Note also that $(\fg^*)^{S_{\alp}/2}_{-2}=\fg^{\times}_{-\alp_{\max}}$.
Altogether we have
\begin{equation}\label{=Fou1}
\eta(g)=\cF_{S_{\alp}/3,0}[\eta](g)+\sum_{\varphi\in \fg^{\times}_{-\alp_{\max}}}\cF_{S_{\alp}/2,\varphi}[\eta](g)\,.
\end{equation}

Note that $\gamma_{\alp}$ conjugates $S_{\alp}/2$ to $h_{\alp}$. Thus, by Lemma \ref{lem:conjugation-translation}, we have $$\cF_{S_{\alp}/2,\varphi}[\eta](g)=\cF_{h_{\alp},\Ad^*(\gamma_{\alp})\varphi}[\eta](\gamma_{\alp}g)$$ and  
\begin{equation}\label{=sTwist}
\sum_{\varphi\in \fg^{\times}_{-\alp_{\max}}}\cF_{S_{\alp}/2,\varphi}[\eta](g)=\sum_{\varphi\in \fg^{\times}_{-\alp}}\cF_{h_{\alp},\varphi}[\eta](\gamma_{\alp}g)\,.
\end{equation}

We restrict the constant term of \eqref{=Fou1} to the maximal abelian quotient of $\Exp(\fg^{S_{\alp}}_{>0})$, decompose to Fourier series and obtain  
\begin{equation}\label{=AbT}
\cF_{S_{\alp}/3,0}[\eta](g)=\sum_{\varphi\in (\fg^*)^{S_{\alp}}_{-2}}\cF_{S_{\alp},\varphi}[\eta](g)\,.
\end{equation}
Formula \eqref{=Heis} follows now from \eqref{=Fou1}, \eqref{=sTwist}, \eqref{=Heis2} and \eqref{=AbT}.
\end{proof}
\begin{remark}\label{rem:Heis}
Let us explain why we chose to use $S_{\alp}$ in Proposition \ref{prop:Heis}. In types $E_6,E_7,$ and $E_8$ this choice follows \S \ref{subsec:LeviDist}. Indeed,
the starting point is $\eta=\cF_{0,0}[\eta]$. In the first step we choose a generic element $z$ in the Cartan. Choose $z$ to be $2$ on the Heisenberg root $\alp$, and to be very small positive rational numbers on other roots. 
This obtained deformation gives the same results as the deformation with $Z=S_{\alp}$. The first critical value is $1/4$, at which we obtain the decomposition described in (\ref{=Fou1}). 
With the constant term we can proceed to the next critical value $1/2$, at which we obtain the decomposition in   (\ref{=AbT}). 

Then we conjugate the non-constant terms obtained in (\ref{=Fou1}) by $\gamma_{\alp}$. This is not part of the algorithm, but we do that for convenience. Now we need to choose a generic $z$ that commutes with the root space of $\alp$.
We choose it to be $1$ on the only simple root non-orthogonal to $\alp$, and very small positive rational numbers on other simple roots. The resulting decomposition appears in Lemma \ref{lem:Heis}. Altogether, this gives Proposition \ref{prop:Heis}.

To express $\eta$ in terms of its Whittaker coefficients one should continue with each of the terms in the right-hand side of \eqref{=Heis}. However, the obtained expression would be very long and complicated. In \cite{Part2} we provide the expression under the assumption that the Whittaker support of $\eta$ consists of the next-to-minimal orbit.

For groups of type $D_n$, \S \ref{subsec:LeviDist} would provide us with a different formula, but we still prove formula (\ref{=Heis}) for all cases for its uniformity, beauty, and future applications.
\end{remark}

\begin{remark} 
Here we elaborate a little on the structure of the Fourier expansion (\ref{=Heis}) and comment on the relation to previous works on Heisenberg expansions. The semisimple element $S_\alp$ defines a Heisenberg parabolic subgroup $P_\alp\subset G$ with Levi decomposition $P_\alp=LU$. The Lie algebra $\mathfrak{p}_\alp\subset \mathfrak{g}$ of $P_\alp$ exhibiting the following grading 
\begin{equation}
\mathfrak{p}_\alp=\mathfrak{g}_0\oplus \mathfrak{g}_1\oplus \mathfrak{g}_2,
\end{equation}
where the subscripts indicate the values of the inner products  $\langle \cdot , \alp_{\max}\rangle$. Thus $\mathfrak{g}_1$ is spanned by all roots $\eps$ such that $\langle \eps, \alp_{\max}\rangle=1$. Equivalently, these are all roots $\eps$ such that $\alp_{\max}-\eps$ is also a root. Notice that all roots in $\mathfrak{g}_1$ are  positive  and the only simple root satisfying the condition $\langle \eps, \alp_{\max}\rangle=1$ is $\alpha$ itself. Since only $\eps=\alpha_{\max}$ satisfies $\langle \eps, \alp_{\max}\rangle=2$ the space $\mathfrak{g}_2$ is one-dimensional, spanned by $E_{\alp_{\max}}$. The zeroth subspace $\mathfrak{g}_0$ is the Lie algebra of the Levi $L\subset P$. The subspace $\mathfrak{g}_1\oplus \mathfrak{g}_2$ is thus the Heisenberg nilpotent subalgebra with center $\mathfrak{g}_2$. Notice that $\sum_{\gamma\in \Psi_\alpha}\mathfrak{g}_\gamma$ is a Lagrangian subspace of $\mathfrak{g}_1$. Indeed, $\mathfrak{g}_1$ has a canonical Lagrangian decomposition 
\begin{equation}
\mathfrak{g}_1=\sum_{\gamma\in \Psi_\alpha}\mathfrak{g}_\gamma \oplus \sum_{\gamma\in \Psi^{\perp}_\alpha}\mathfrak{g}_\gamma,
\end{equation}
where $\Psi_\alp^{\perp}$ is the orthogonal complement 
\begin{equation}
\Psi_\alp^{\perp}=\{\text{ root } \eps \mid \langle \eps, \alp \rangle \geq  1, \langle \eps, \alp_{\max}\rangle=1 \}.
\end{equation}
Note that the root $\alpha$ belongs to $\Psi_\alp^{\perp}$. The Fourier expansion (\ref{=Heis}) thus corresponds to the standard  non-abelian Fourier expansion along the Heisenberg unipotent $U$, which exhibits a sum over the center $\mathfrak{g}_2$ along with a sum over a Lagrangian subspace $\Psi_\alp$ of $\mathfrak{g}_1$. The choice of Lagrangian decomposition is usually referred to as a choice of ``polarization''. Similar kinds of expansions have been treated in several places in the literature; see \cite{MR1159103,Kazhdan:2001nx,PP,BKNPP,FGKP} for a sample. In the notation of the original paper by Kazhdan and Savin \cite{MR1159103}, the space $\Psi_\alp$ corresponds to $\Pi_o^{*}$ while $\Psi_\alp^{\perp}$ corresponds to $\Pi_o$.
\end{remark}

\subsection{Whittaker triples}\label{subsec:Whit3}
We will now illustrate what type of quasi-Fourier coefficients we are able to describe using Whittaker triples that are not captured by Whittaker pairs in an example for $G = \SL_4$.

Let $(S, \varphi, \psi)$ be the Whittaker triple with $S = \tfrac{1}{3}\diag(3,1,-1,-3)$, $\varphi = e_{41}$ and $\psi = m e_{31} + n e_{42}$, where $m,n \in \K$ and $e_{ij}$ denote elementary matrices. The $S$-eigenvalues for the different elementary matrices can be illustrated by the following matrix
\begin{equation}
    \begin{psmallmatrix}
          0 & 2/3   & 4/3   & 2   \\
       -2/3 & 0     & 2/3   & 4/3   \\
       -4/3 & -2/3  & 0     & 2/3 \\
       -2   & -4/3  & -2/3  & 0    \\
    \end{psmallmatrix} \, ,
\end{equation}
from which we may read out that $\varphi$ has eigenvalue $-2$ while $\psi$ has eigenvalue $-4/3$.

As seen from this matrix we get the following unipotent subgroup (independent of $\psi$)
\begin{equation}
    N_{S,\varphi} = \left\{
        \begin{psmallmatrix}
            1 & 0 & x_2 & x_1 \\
              & 1 & 0 & x_3 \\
              &   & 1 & 0 \\
              &   &   & 1
        \end{psmallmatrix} : x_1, x_2, x_3 \in \mathbb{A} \right\} \, ,
\end{equation}
and the corresponding Fourier coefficient of an automorphic function $\eta$  can be expressed as
\begin{equation}
    \mathcal{F}_{S,\varphi,\psi}[\eta](g) = \intl_{(\K\bs\A)^3} \eta\Bigl( 
    \begin{psmallmatrix}
        1 & 0 & x_2 & x_1 \\
          & 1 & 0 & x_3 \\
          &   & 1 & 0 \\
          &   &   & 1
    \end{psmallmatrix}g\Bigr) {\chi(x_1 + m x_2 + n x_3)}^{-1} \, d^3x \, ,
\end{equation}
where we recall that $\chi$ is a fixed non-trivial character on $\mathbb{A}$ trivial on $\mathbb{K}$.

From this example we see that we require Whittaker triples in addition to Whittaker pairs if we want to construct Fourier coefficients with characters that are not only supported on $x_1$ but also on $x_2$ and $x_3$.

\subsection{The case of \texorpdfstring{$\GL_n$}{GL(n)}}\label{subsec:GL}
Let $\bf G:=\GL_n$, $G:=\GL_n(\A)$, and $\Gamma:=\GL_n(\K)$. In this section we will follow Algorithm~\ref{alg:domin} and \S \ref{subsec:LeviDist} to present any automorphic function $\eta\in C^{\infty}(\Gamma \backslash G)$ as a countable linear combination of its Whittaker coefficients. We will show that our proof amounts in this case morally to the same decomposition as in \cite{PiatetskiShapiro,Shalika,JiangLiu}.

In \cite{PiatetskiShapiro,Shalika}, $\eta$ is first restricted to the mirabolic nilradical, {\it i.e.} 
\begin{align}
U=\left \{\left(\begin{array}{cc}
     \Id_{n-1} & *  \\
     0 &1\\
   \end{array}\right) \right\},
\end{align}
and decomposed into Fourier series with respect to $U$. 
Our algorithm does the same thing, but in several steps. 
First let $(h,\varphi)=(0,0)$. Let $N\gg0$, 
\begin{align}
z_1:=\diag(0,-1,-N,\dots,-N^{n-3},-N^{n-2}),
\end{align}
and consider the deformation $S_t:=tz_{1}$. Under this deformation, the first thing that happens is that the highest root space (spanned by $e_{1n}$) enters $\fg^{S_t}_1$. At this point $\eta$ decomposes into a sum of quasi-Fourier coefficients. At the next step $e_{1n}$ enters $\fg^{S_t}_2$, and the quasi-Fourier coefficients become Fourier coefficients. For the constant term, we continue with the same deformation, until $e_{2n}$ enters. 
For the non-constant term we have to change the deformation into something that will commute with $\varphi$. The $\varphi$ can be identified with $ae_{n1}$ under the trace form, for some $a\in \K^{\times}$. We take the deformation by 
\begin{align}
z_2:=\diag(-N^{n-2},-1,-N,\dots,-N^{n-4},-N^{n-3},-N^{n-2}),
\end{align}
and continue in the same way.  Eventually, all of $U$ enters and all possible characters (including the trivial one) appear. 

Let us now analyze the summands. The constant term is $\cF_{tz_1,0}$ for $t=2/(N^{n-2}-N^{n-3})$, and we can continue the deformation along $z_1$. Any non-trivial character of $U$ can be conjugated using $\GL_{n-1}$ (embedded into the upper left corner) to the one given by $e_{n,n-1}$. We can now choose the deformation 
\begin{equation}
z_3:=\diag(-1,-N,\dots,-N^{n-4},-N^{n-3},-N^{n-3}).
\end{equation}
In the same way as above, it will give a decomposition of $\cF_{tz_1,0}$ into Fourier series with respect to the column $n-1$, \ie
\begin{equation}
U'=\left \{\left(\begin{array}{ccc}
     \Id_{n-2} & *  & 0 \\
     0 &1&0\\
     0&0&1
   \end{array}\right) \right\}.
\end{equation}
   
Continuing in this way we obtain
\begin{equation}\label{=PSS}
\eta(g)=\sum_{x\in 2^{[n-1]}}\sum_{\gamma\in \Gamma_x}\cF_{S,\varphi_x}[\eta](\gamma g),
\end{equation}
where $[n-1]$ denotes the set $\{1,\dots,n-1\}$, $2^{[n-1]}$ denotes the set of all its subsets, $S=\diag(n-1,n-3,\dots, 3-n,1-n)$, and for any $x\in 2^{[n-1]}$,  $\psi_x:=\sum_{i\in x}e_{i+1,i}$ and $\Gamma_x$ is a certain subset of $\Gamma$. 

For cuspidal $\eta$ and $x\neq [n-1]$, we have $\cF_{S,\varphi_x}[\eta]=0$ and \eqref{=PSS} becomes the formula in \cite{PiatetskiShapiro,Shalika}.
For $\eta$ in the discrete spectrum, $\cF_{S,\varphi_x}[\eta]$ vanishes for many $x$ by \cite[Lemma 3.2]{JiangLiu}, and 
\eqref{=PSS} reflects the formula in \cite[Theorem 3.3]{JiangLiu}.
If $\eta$ is minimal then $\cF_{S,\varphi_x}[\eta]=0$ for $|x|>1$ and if $\eta$ is next-to-minimal then $\cF_{S,\varphi_x}[\eta]=0$ for $|x|>2$. 
These cases were computed in  \cite{Ahlen:2017agd}, motivated by applications in string theory.

\subsection{Examples for \texorpdfstring{$\Sp_{4}$}{Sp(4)}}\label{subsec:Sp4}
Let $G:=\Sp_4(\A)$, $\Gamma:=\Sp_4(\K)$ and let $\eta\in C^{\infty}(\Gamma\backslash G)$. In this section we express $\eta$ in tquaserms of its Levi-distinguished Fourier coefficients, providing an example for Algorithm~\ref{alg:domin} and Remark \ref{rem:alg}.
Let $\fg:=\Lie(\Gamma)$, realized in $\gl_4$ by the $2\times 2$ block matrices \begin{equation}\label{=block}
\left(
   \begin{array}{cc}
     A & B=B^t  \\
     C=C^t & -A^t\\
   \end{array}
 \right).
 \end{equation}
 
Let $\fn \subset \fg$ be the maximal unipotent subalgebra spanned by the matrices $e_{12}-e_{43}, e_{13}, e_{24}, e_{14}+e_{23}$ and let $N:=\Exp(\fn(\A))$. For any $a,b\in \K$ denote by $\chi_{a,b}$ the character of $\fn$ given by $\chi_{a,b}(e_{12}-e_{43})=a$ and $\chi_{a,b}(e_{24})=b$, and let $\cW_{a,b}$ denote the corresponding Whittaker coefficient. 
Let $\fu\subset \fn$ be the Siegel nilradical, \ie the normal commutative subalgebra spanned by the matrices $e_{13}, e_{24}, e_{14}+e_{23}$ and let $U:=\Exp(\fu(\A))$. Let $L$ denote the Siegel Levi subgroup of $\Gamma$ given by $\diag(g,(g^t)^{-1})$, where $g\in \GL_2(\A)$. Using the trace form on $\fg$, we can identify $\fu^*$ with the nilradical $\bar \fu$ of the opposite parabolic, \ie with the space of matrices of the form \eqref{=block} with $A=B=0$.
Note that $\bar \fu\cong \Sym^2(\K^2)$, and $L$ acts on it by the standard action on symmetric forms.  
For any $\varphi \in \fu^*\cong \bar \fu \cong \Sym^2(\K^2)$, denote by $\cF_{\fu,\varphi}$ the corresponding parabolic Fourier coefficient.

Let us now outline the strategy for this subsection.
According to \S \ref{subsec:LeviDist} we should choose a generic element $z$ of the Cartan. Choose it to be 1 on the long simple root $\alp_2=2\eps_2$ that defines $U$ and a small positive rational number on the short simple root. Then, the first steps of the algorithm will provide us with decomposition of $\eta$ into Fourier series along the abelian unipotent radical $U$. The coefficients will be parameterized by characters of $\fu$, that can be identified with quadratic forms on $\K^2$. For the zero form we can just continue the deformation all the way to a Whittaker coefficient. All forms of rank one are conjugate, so we can conjugate them to a convenient form and again continue the deformation, obtaining a Whittaker coefficient. The same goes for split forms of rank two. The non-split forms of rank two belong to $\K$-distinguished orbits, and thus the corresponding Fourier coefficients are already Levi-distinguished. 

We will now give all the details of the decompositions we just mentioned. We will do this in elementary terms in a self-contained way.

Since $U$ is abelian, the Fourier decomposition on it gives 
\begin{equation}\label{=SpUFou}
    \eta=\sum_{\varphi\in \fu^*}\cF_{\fu,\varphi}[\eta]\,.
\end{equation}

We now decompose this sum into three different terms, by the rank of $\varphi$, viewed as a quadratic form. Let us first analyze the constant term $\cF_{\fu,0}[\eta]$.
We restrict it to $L$, and decompose to Fourier series on the abelian group $N\cap L$. We obtain 
\begin{equation}
\cF_{\fu,0}[\eta]=\sum_{a\in \K}\cW_{a,0}[\eta]\,.
\end{equation}

Next, any $\varphi$ of rank one is conjugate under $L$ to 
$\varphi_1:=
\begin{psmallmatrix}
    1 & 0  \\
    0 & 0\\
\end{psmallmatrix}$. This $\varphi_1$ is normalized by $N$, and thus we can again decompose $\cF_{\fu,\varphi}[\eta]$ on $N\cap L$. We obtain 
\begin{equation}
    \cF_{\fu,\varphi_1}[\eta]=\sum_{a\in \K}\cW_{a,1}[\eta]\,.
\end{equation}

The non-degenerate forms (\ie those of rank two) can be divided into two subsets: split and non-split. All the split ones are conjugate under $L$ to $\varphi_2:=
\begin{psmallmatrix}
    0 & 1  \\
    1 & 0\\
\end{psmallmatrix}$. Let $w\in \Gamma$ denote a representative for the Weyl group element given by the simple reflection with respect to the long simple root $\alp_2=2\eps_2$, \eg $w=\diag(1,1,1,-1)\sigma_{24}$, where $\sigma_{24}$ is the permutation matrix on indices 2 and 4.
Then $\fu^w=\Span(e_{12}-e_{43}, e_{13}, e_{42})$, and $\varphi_2^w$ equals the restriction to $\fu^w$ of $\chi_{1,0}$. Using Corollary \ref{cor:RootExchange}, we can express $\cF_{\fu^w,\chi_{1,0}}$ through $\cF_{\fu',\chi_{1,0}}$, where $\fu'=\Span(e_{12}-e_{43}, e_{13}, e_{24})\subset \fn$. The integration will be over elements matrices of the form $v_x:=\Id+xe_{24}\in G$.
Using Fourier expansion by the remaining coordinate of $e_{14}+e_{23}\in \fn$, we obtain
\begin{equation}\label{=rk2}
\cF_{\fu,\varphi}[\eta](g)=\int\limits_{x}\cW_{1,a}[\eta](v_{x}wg) dx.
\end{equation}
Finally, let $X\subset \bar u \cong \fu^*$ denote the set  of anisotropic non-degenerate forms. For $\varphi \in X$, we have no expression of $\cF_{\fu,\varphi}[\eta]$ in terms of Whittaker coefficients. However, any $\varphi\in X$ is  $\K$-distinguished. Indeed, let $h:=\Id \in \fl$.  Then $(h,\varphi)$ is a neutral pair, and its centralizer is anisotropic. By Lemma \ref{lem:K-distinguished} applied to  $(h,\varphi)$  and $Z:=0$,  $\varphi$ is $\K$-distinguished. 

Combining \eqref{=SpUFou}--\eqref{=rk2} we obtain the following theorem, that exemplifies Algorithm~\ref{alg:domin} and Remark \ref{rem:alg}.

\begin{thm}\label{thm:Sp}
For any $\eta\in C^{\infty}(\Gamma \backslash G)$ and $g\in G$, $\eta(g)$ equals
\begin{equation}
\sum_{\varphi\in X}\cF_{\fu,\varphi}[\eta](g)+\sum_{a\in \K}\Biggl(\sum_{\gamma\in L/ O(1,1)}\int\limits_{\A} \cW_{1,a}[\eta](v_xw\gamma g)dx+\suml{\gamma\in L/(N\cap L)}\cW_{a,1}[\eta](\gamma g)+\cW_{a,0}[\eta](g)\Biggr),
\end{equation}
where $O(1,1)\subset L$ denotes the stabilizer of the split form $\varphi_2$.
\end{thm}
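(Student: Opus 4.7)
The plan is to follow the algorithm from the proof of Theorem~\ref{thm:levi-distinguished}, specialized to $G=\Sp_4$. Since $U$ is abelian and $[U]$ is compact, the first step is to Fourier expand $\eta$ along $U$,
\begin{equation*}
\eta(g)=\sum_{\varphi\in\mathfrak{u}^*}\cF_{\mathfrak{u},\varphi}[\eta](g),
\end{equation*}
and partition the index set $\mathfrak{u}^*\cong \Sym^2(\mathbb{K}^2)$ by the rank of $\varphi$. The classification of symmetric bilinear forms on $\mathbb{K}^2$ yields four $L$-invariant strata: zero, rank one, split rank two, and anisotropic rank two. The first three form single $L$-orbits, while the fourth is the set $X$ of characters to be shown $\mathbb{K}$-distinguished.

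For the rank-zero term, I would restrict $\cF_{\mathfrak{u},0}[\eta]$ to the Levi $L$ and further Fourier expand along the one-dimensional abelian group $N\cap L$, producing the $\sum_{a}\mathcal{W}_{a,0}[\eta]$ contribution. For the rank-one term, every nonzero rank-one form is $L$-conjugate to $\varphi_1$, so by Lemma~\ref{lem:conjugation-translation} the rank-one sum can be written as a sum of translates $\cF_{\mathfrak{u},\varphi_1}[\eta](\gamma g)$; since $N\cap L$ stabilizes $\chi_{\varphi_1}$, a further Fourier expansion of $\cF_{\mathfrak{u},\varphi_1}$ along $N\cap L$, merged with the coset indexing, yields $\sum_{a}\sum_{\gamma\in L/(N\cap L)}\mathcal{W}_{a,1}[\eta](\gamma g)$.

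The split rank-two case is the main technical one. After using $L$-conjugation to reduce to $\varphi_2$, I would apply Lemma~\ref{lem:conjugation-translation} with the Weyl representative $w=\mathrm{diag}(1,1,1,-1)\sigma_{24}$ of the long-root simple reflection to move the integral from $\mathfrak{u}$ onto the conjugate isotropic subalgebra $\mathfrak{u}^w=\Span(e_{12}-e_{43},e_{13},e_{42})$, under which the character of $\varphi_2$ becomes the restriction of $\chi_{1,0}$. Then the root-exchange Corollary~\ref{cor:RootExchange} is used to swap $\mathfrak{u}^w$ for $\mathfrak{u}'=\Span(e_{12}-e_{43},e_{13},e_{24})\subset\mathfrak{n}$, at the cost of a non-compact adelic integral over $v_x=\Id+xe_{24}$; a final Fourier expansion in the remaining coordinate $e_{14}+e_{23}$ then yields~\eqref{=rk2}.

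For the anisotropic rank-two term, it suffices to check that each $\varphi\in X$ is itself $\mathbb{K}$-distinguished. Taking $h=\Id\in\mathfrak{l}$ and $Z=0$, the pair $(h,\varphi)$ is neutral with common centralizer the orthogonal group of the anisotropic form $\varphi$, which contains no nontrivial split $\mathbb{K}$-torus. Lemma~\ref{lem:K-distinguished} then forces $\varphi$ to be $\mathbb{K}$-distinguished in $\mathfrak{g}$, so $\cF_{\mathfrak{u},\varphi}[\eta]$ is already a Levi-distinguished Fourier coefficient and these terms appear untouched as the first sum of the theorem. The hardest step will be the split rank-two case: the root exchange together with the bookkeeping of coset representatives and the identification of the correct isotropic subspaces require care, though Corollary~\ref{cor:RootExchange} makes the machinery available in a single application.
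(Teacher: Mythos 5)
Your proposal is correct and follows essentially the same route as the paper: Fourier expansion along the abelian Siegel unipotent, stratification of $\fu^*\cong\Sym^2(\K^2)$ by rank, expansion along $N\cap L$ for the rank-zero and rank-one strata, the Weyl conjugation by $w$ plus root exchange (Corollary~\ref{cor:RootExchange}) and a final Fourier expansion in the $e_{14}+e_{23}$ coordinate for the split rank-two stratum, and the neutral pair $(h,\varphi)$ with $h=\Id$, $Z=0$ and Lemma~\ref{lem:K-distinguished} to see that the anisotropic characters are $\K$-distinguished. The only remaining work, as you note, is the coset bookkeeping over $L/O(1,1)$ and $L/(N\cap L)$, which the paper also leaves implicit when combining the pieces.
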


If $\eta$ is cuspidal then $\cW_{0,a}[\eta]=\cW_{a,0}[\eta]=0$. If $\eta$ is non-generic $\eta$, then $\cW_{1,a}[\eta]=\cW_{a,1}[\eta]=0$, unless $a=0$. Thus Theorem \ref{thm:Sp} implies the following corollary.

\begin{cor}\label{cor:Sp}
Let $\eta\in C^{\infty}(\Gamma \backslash G)$ and $g\in G$. 
\begin{enumerate}
\item  \label{it:SpCusp}If $\eta$ is cuspidal then
\begin{equation}
\eta(g)=\sum_{\varphi\in X}\cF_{\fu,\varphi}[\eta](g)+\sum_{a\in \K^{\times}}\Biggl(\sum_{\gamma\in L/ \mathrm{O}(1,1)}\int\limits_{\A} \cW_{1,a}[\eta](v_xw\gamma g)dx +\suml{\gamma\in L/(N\cap L)}\cW_{a,1}[\eta](\gamma g)\Biggr)\,.
\end{equation}
\item \label{cor:Sp2} If $\eta$ is non-generic then 
\begin{equation}
\eta(g)=\sum_{\varphi\in X}\cF_{\fu,\varphi}[\eta](g)+\hspace{-1em}\sum_{\gamma\in L/ \mathrm{O}(1,1)}\int\limits_{\A}\!\cW_{1,0}[\eta](v_xw\gamma g)dx +\suml{\gamma\in L/(N\cap L)}\cW_{0,1}[\eta](\gamma g)+\sum_{a\in \K}\cW_{a,0}[\eta](g)\,.
\end{equation}
\item If $\eta$ is cuspidal and non-generic then 
$\eta=\sum_{\varphi\in X}\cF_{\fu,\varphi}[\eta]$.
\end{enumerate}
\end{cor}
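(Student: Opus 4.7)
My plan is to deduce Corollary~\ref{cor:Sp} directly from Theorem~\ref{thm:Sp} by establishing the two vanishing properties of Whittaker coefficients stated immediately before the corollary: (a) for cuspidal $\eta$, the degenerate coefficients $\cW_{a,0}[\eta]$ and $\cW_{0,a}[\eta]$ vanish for every $a\in\K$; and (b) for non-generic $\eta$, all non-degenerate Whittaker coefficients $\cW_{a,b}[\eta]$ with $a,b\in\K^\times$ vanish. Both claims are essentially standard, but I would write out the first one carefully since it is where the input of cuspidality enters.

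To verify (a), I would note that the character $\chi_{a,0}$ is trivial on the Siegel nilradical $U=\Exp(\fu(\A))$, because $\fu$ is spanned by $e_{13},e_{24},e_{14}+e_{23}$ and $\chi_{a,0}$ assigns $0$ to $e_{24}$ (and already to the other two). Using the decomposition $N=(N\cap L)\cdot U$ and Fubini, I would write
\begin{equation*}
\cW_{a,0}[\eta](g)=\intl_{[N\cap L]}\chi_{a,0}(l)^{-1}\Bigl(\intl_{[U]}\eta(u l g)\,du\Bigr)\,dl,
\end{equation*}
whose inner integral is the constant term of $\eta$ along the Siegel parabolic, which vanishes by cuspidality. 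For $\cW_{0,a}$, the same argument applies with the roles of the two maximal parabolics swapped: $\chi_{0,a}$ is trivial on the unipotent radical $U_1$ of the parabolic $P_1$ (whose root spaces are $e_{12}-e_{43},e_{13},e_{14}+e_{23}$), since the long root $e_{24}$ lies in the Levi of $P_1$; thus $\cW_{0,a}[\eta]$ factors through the constant term along $U_1$, which vanishes by cuspidality. Claim (b) is just the definition of non-genericity, since the generic characters of $N$ are precisely $\chi_{a,b}$ with $a,b\in\K^\times$.

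Armed with (a) and (b), each part of the corollary reduces to reading off the surviving terms in Theorem~\ref{thm:Sp}. For (i), cuspidality makes the entire sum $\sum_{a\in\K}\cW_{a,0}[\eta]$ drop out, and in addition kills the $a=0$ summand of $\cW_{1,a}$ (namely $\cW_{1,0}=\cW_{a,0}|_{a=1}$) and of $\cW_{a,1}$ (namely $\cW_{0,1}$), so the remaining outer sums are restricted to $\K^\times$. For (ii), non-genericity collapses the inner sums $\sum_{a\in\K}\cW_{1,a}$ and $\sum_{a\in\K}\cW_{a,1}$ to their single $a=0$ summands $\cW_{1,0}$ and $\cW_{0,1}$, while the $\cW_{a,0}$ sum is untouched. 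For (iii), imposing both assumptions kills every Whittaker summand, leaving only the $\K$-distinguished piece $\sum_{\varphi\in X}\cF_{\fu,\varphi}[\eta]$.

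I do not anticipate a serious obstacle: once Theorem~\ref{thm:Sp} is in hand, the corollary is a purely bookkeeping application of these vanishings. The only step requiring any care is the verification that the degenerate Whittaker coefficients factor through constant terms along the two maximal parabolics of $\Sp_4$, which is transparent from the description of $\fn$ and the fact that $\chi_{a,0}$ and $\chi_{0,a}$ are supported, respectively, only on the short and long simple root spaces.
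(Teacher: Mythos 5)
Your proposal is correct and matches the paper's own (very brief) argument: the corollary is read off from Theorem~\ref{thm:Sp} using exactly the two vanishing statements asserted there, namely $\cW_{a,0}[\eta]=\cW_{0,a}[\eta]=0$ for cuspidal $\eta$ and the vanishing of the generic coefficients $\cW_{1,a},\cW_{a,1}$ ($a\neq 0$) for non-generic $\eta$. Your extra verification that the degenerate coefficients factor through the constant terms along the Siegel and Klingen parabolics is a correct filling-in of what the paper leaves as standard, not a different route.
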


\appendix

\section{On PL-orbits}\label{subsec:PL}

\numberwithin{lemma}{section}

A complex orbit is a PL-orbit if and only if its Bala-Carter label has no parenthesis. In particular, all complex minimal and next-to-minimal orbits are PL. The classification of PL orbits of complex classical groups in terms of the corresponding partitions is given in \cite[\S 6]{GS15}.

The classification of rational PL-orbits is a more complicated task. 
In this subsection we discuss the PL property for small $\K$-rational orbits of simple split groups.
A complex orbit $\cO_{\C}$ may include several or even infinitely many rational orbits. If $\cO_{\C}$ is non-PL then all its rational orbits are non-PL. If $\cO_{\C}$ is PL then it includes at least one rational PL-orbit, but can also include non-PL rational orbits. In type $A_n$, all rational orbits are PL. Let us now describe the PL properties of minimal and next-to-minimal orbits. Here, minimal and next-to-minimal refers to the closure order on the complex orbits, which might be coarser than the order defined in Definition \ref{def:order}.

All minimal rational orbits are PL. Indeed, for classical groups it is easy to establish the Levi in which they are principal: for $\SO_{n+1,n}$ it is $\SO_{2,1}\times (\GL_1)^{n-1}$, for $\Sp_{2n}$ it is $\Sp_2\times (\GL_1)^{n-1}$ and for $\SO_{n,n}$ it is $\SO_{2,2}\times (\GL_1)^{n-2}$. For exceptional groups, the rational minimal orbit is unique and thus PL. This uniqueness was explained to us by Joseph Hundley. 
Let us now deal with the next-to-minimal orbits.
\begin{lemma}
    All next-to-minimal rational orbits for $\SO_{n,n}$ and $\SO_{n+1,n}$ are PL.
\end{lemma}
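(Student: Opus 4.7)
The plan is to exhibit, for each rational element $X$ in a next-to-minimal orbit, an explicit $\K$-Levi subalgebra of $\fg$ in which $X$ is principal. For $\SO_{n,n}$ the main complex next-to-minimal orbit corresponds to partition $[3, 1^{2n-3}]$, and for $\SO_{n+1,n}$ it is $[3, 1^{2n-2}]$; the alternative orbits $[2^4, 1^{\star}]$ which can occur for $n\geq 4$ each contain a unique rational orbit and are immediately seen to be principal in a Levi of type $(\GL_2)^2 \times \GL_1^{\star}$ coupled with a trivial or 1-dimensional orthogonal factor, so I focus on the $[3,1^\star]$ case.

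Consider $\SO_{n,n}$ and a rational $X$ acting on the split $2n$-dimensional quadratic space $V \cong H^n$, where $H$ denotes a hyperbolic plane. Decompose $V = V_3 \perp V_1$, with $V_3$ the 3-dimensional subspace carrying the nontrivial Jordan block of $X$ and $V_1 = V_3^\perp$ the $(2n-3)$-dimensional trivial part. The isomorphism class of $V_3$ is the sole invariant of the rational orbit; from the $\sll_2$-invariant symmetric form on the 3-dimensional irreducible representation one finds $V_3 \cong H \perp \langle c \rangle$ for some $c \in \K^\times/(\K^\times)^2$, and Witt cancellation then forces $V_1 \cong H^{n-2} \perp \langle -c \rangle$.

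The crucial step is now to choose a vector $w \in V_1$ spanning its anisotropic direction, so that $\langle w, w \rangle = -c$, and set $W := V_3 \perp \K w$. A direct computation gives $W \cong H \perp \langle c, -c \rangle \cong H^2$, so $W$ is a 4-dimensional split subspace, and $W^\perp \cong H^{n-2}$ is a sum of $n-2$ hyperbolic planes. The centralizer in $\SO(V)$ of the split torus $T \cong \GL_1^{n-2}$ scaling each of these planes is then the $\K$-Levi $L := \SO(W) \times T \cong \SO_{2,2} \times \GL_1^{n-2}$. Since $X|_W$ has Jordan type $[3,1]$ -- precisely the partition of the principal nilpotent of $\SO_{2,2}$ -- while $X$ acts trivially on $W^\perp$, the element $X$ is principal in $L$, proving that the orbit is PL.

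The argument for $\SO_{n+1,n}$ with partition $[3, 1^{2n-2}]$ runs in parallel, producing a $\K$-Levi of type $\SO_{2,1} \times \GL_1^{n-1}$ in which $X$ is principal, with the 1-dimensional anisotropic part of $V$ playing a role analogous to that of $w$. The main technical point throughout -- and the hardest thing to check -- will be verifying, via Witt's cancellation theorem and the fact that $V$ represents every nonzero scalar in $\K$, that the auxiliary subspace $W$ of the required isomorphism class actually embeds inside $V$ so that the intended $L$ is realized as the centralizer of a split torus in $\SO(V)$.
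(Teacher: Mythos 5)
Your treatment of the split even case is correct and essentially follows the paper's route: both arguments rest on the classification of rational orbits by the quadratic forms on the multiplicity spaces of the odd Jordan blocks, plus Witt cancellation, which pins down the form on the fixed space as $\langle -c\rangle \perp H^{n-2}$. Where you differ is that the paper then invokes a general criterion (stated without proof there) for PL-ness in terms of these forms, whereas you construct the Levi explicitly, adjoining to $V_3$ a vector $w$ of norm $-c$ and taking the centralizer of a split torus acting on the hyperbolic complement. This is more self-contained, and it works precisely because the principal nilpotent of $\SO_{2,2}$ has Jordan type $[3,1]$: the spare $1$-block absorbs the line $\langle -c\rangle$ for every value of $c$. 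The $[2^4,1^{\ast}]$ case is also fine (Witt cancellation forces the form on the $1$-part to be hyperbolic, so the rational orbit is unique and visibly principal in $(\GL_2)^2\times\GL_1^{\ast}$), in agreement with the paper.

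The assertion that the odd case ``runs in parallel'' is a genuine gap, and in fact the parallel breaks down. For $\SO_{n+1,n}$ the only $\K$-Levi whose principal nilpotent has partition $[3,1^{2n-2}]$ is $\SO(W')\times\GL_1^{n-1}$ with $W'$ a nondegenerate $3$-dimensional subspace whose orthogonal complement is hyperbolic; since the principal nilpotent of $\mathfrak{so}(W')$ is a single Jordan block $[3]$, there is no spare $1$-block to absorb an extra line, so any admissible $W'$ must be isometric to the block space $V_3\cong H\perp\langle c\rangle$ itself. PL-ness therefore forces $V_3^{\perp}\cong\langle -c\rangle\perp\langle d\rangle\perp H^{n-2}$ to be hyperbolic, i.e. $c\equiv d$ modulo squares, where $V\cong H^{n}\perp\langle d\rangle$. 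Rational orbits with $c\not\equiv d$ exist over any number field (e.g. $d=1$, $c=-1$ over $\Q$: a $[3]$-block on $H\perp\langle -1\rangle$ and zero on $\langle 1,1\rangle\perp H^{n-2}$), and such elements are not PL; this is consistent with the isomorphism $\mathfrak{so}_5\cong\mathfrak{sp}_4$ and the paper's own observation that $\Sp_4$ has $\K$-distinguished next-to-minimal rational orbits. So your sketch (and, to be fair, the paper's own one-sentence ``similarly, it is easy to see'') cannot establish the $\SO_{n+1,n}$ half as stated: at best one gets PL-ness for the $[2^4,1^{2n-7}]$ orbit and for the single rational orbit of type $[3,1^{2n-2}]$ whose $3$-block form matches the ambient discriminant. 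If you want to keep the odd case, you must either restrict to those orbits or find an argument not based on the Levi $\SO_{2,1}\times\GL_1^{n-1}$, and no such Levi-theoretic argument can exist for the mismatched classes.
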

\begin{proof}
    One can give a the classification of the rational orbits in the spirit of the classification of real orbits given in \cite[\S 9.3]{CM}. 
    Namely, a $\K$-rational orbit with a given partition is defined by a collection of quadratic forms $Q_{2i+1}$ on multiplicity spaces of the odd parts. If we add a hyperbolic form to the direct sum of these forms we get the initial form, which is also hyperbolic. Here, a hyperbolic form is a direct multiple of the 2-dimensional quadratic form given by $H(x,y)=xy$.
    By Witt's cancelation theorem this implies that the direct sum of the forms on multiplicity spaces of the odd parts is hyperbolic. 

    An orbit for $\SO_{n,n}$ is PL if and only if all $Q_{2i+1}$ are hyperbolic, except $Q_{2j+1}$ for a single index $j\geq 1$, which is a direct sum of a hyperbolic form and a one-dimensional quadratic form. For $\SO_{n,n}$ there are two next-to-minimal partitions.
    One of them is $2^41^{2n-8}$. For it, $Q_1$ has to be hyperbolic.
    The other next-to-minimal partition is $3 1^{2n-3}$. Thus $Q_3$ is one-dimensional. Now, note that $H^n=Q_3\oplus-Q_3 \oplus H^{n-1}$. Thus, $Q_3\oplus Q_1=Q_3\oplus-Q_3 \oplus H^{n-1}$ and thus $Q_1=(-Q_3) \oplus H^{n-1}$, \ie $Q_1$ is a direct sum of a hyperbolic form and a one-dimensional quadratic form.

    Similarly, it is easy to see that the next-to-minimal orbits for $\SO_{n+1,n}$\ are principal in Levis isomorphic to $(\GL_2)^2\times (\GL_1)^{n-4}$ or  $\SO_{2,1}\times (\GL_1)^{n-1}$. 
\end{proof}

However, $\Sp_{2n}(\K)$ has infinitely many rational next-to-minimal orbits, already for $n=2$.  Moreover, there exist cuspidal next-to-minimal representations of $\Sp_4(\A)$. Note that cuspidal non-generic automorphic forms cannot expressed through their Whittaker coefficients, since the latter coefficients have to vanish on such forms. See \cite[\S 4]{Ginz} for a discussion of cuspidal representations, in particular those of $\Sp_4(\A)$. 

As for the exceptional groups, Joseph Hundley showed that the next-to-minimal orbit is unique, and thus PL, for $E_6,E_7,E_8$ and $G_2$~\cite{HundleyMail}.

The group $F_4$ has infinitely many rational next-to-minimal orbits.
We expect that infinitely many of them are not PL.

\section{Some geometric lemmas}\label{sec:Geo}
\setcounter{lemma}{0}

\begin{lemma}\label{lem:SameOrbit}
    Let $Z\in \fg$ be rational semi-simple, let $\varphi\in \fg^Z_0$ and $\varphi'\in \fg^Z_{>0}$. Assume that $\varphi$ is conjugate to $\varphi+\varphi'$ by ${\bf G}(\C)$. Then there exist $X\in \fg^{Z}_{>0}$ such that $\ad^*(X)(\varphi)=\varphi'$ and $v\in \Exp(\fg^{Z}_{>0})$ such that $\Ad^*(v)(\varphi)=\varphi+\varphi'$. 
\end{lemma}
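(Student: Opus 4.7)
The plan is to interpolate from $\varphi+\varphi'$ back to $\varphi$ inside the common orbit $\mathcal{O}={\bf G}(\C)\cdot\varphi$ by means of a cocharacter attached to $Z$, and to argue by induction on the lowest positive $Z$-weight appearing in $\varphi'$. First I would fix an integer $N>0$ such that $NZ$ has integral eigenvalues, and let $\mu:\mathbb{G}_m\to{\bf G}$ be a cocharacter defined over $\K$ whose derivative at $1$ is $NZ$, so that $\Ad^*(\mu(t))$ acts on $(\fg^*)^Z_j$ by the scalar $t^{Nj}$. Decompose $\varphi'=\sum_{j>0}\varphi'_j$ into $Z$-weight components and let $j_1>0$ be the smallest weight with $\varphi'_{j_1}\neq 0$.

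The key step is to exhibit $\varphi'_{j_1}$ as a Zariski tangent vector to $\mathcal{O}$ at $\varphi$. Consider the morphism $\gamma:\mathbb{A}^1\to\fg^*$, $\gamma(t)=\varphi+\sum_j t^{Nj}\varphi'_j$. For $t\in\mathbb{G}_m$ one has $\gamma(t)=\Ad^*(\mu(t))(\varphi+\varphi')\in\mathcal{O}$, and $\gamma(0)=\varphi\in\mathcal{O}$; since $\mathcal{O}$ is locally closed in $\fg^*$, the morphism $\gamma$ factors through $\mathcal{O}$. Smoothness of $\mathcal{O}$ at $\varphi$ (as a homogeneous space $G/G_\varphi$), combined with the fact that the lowest-order deviation of $\gamma(t)$ from $\varphi$ is $t^{Nj_1}\varphi'_{j_1}$, forces $\varphi'_{j_1}\in T_\varphi\mathcal{O}$. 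Refining by the $Z$-grading, one obtains $\varphi'_{j_1}\in T_\varphi\mathcal{O}\cap(\fg^*)^Z_{j_1}=\ad^*(\fg^Z_{j_1})\varphi$, and by linear algebra over $\K$ (the image of a $\K$-linear map is preserved under base change) one finds $Y\in\fg^Z_{j_1}$ rational with $\ad^*(Y)\varphi=\varphi'_{j_1}$.

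Now apply $\Ad^*(\exp(-Y))$ to $\varphi+\varphi'$: the result $\varphi+\tilde\varphi'$ is again in $\mathcal{O}$, and expanding $\exp(-\ad^*(Y))$ shows the weight-$j_1$ component cancels against $\ad^*(Y)\varphi$, so $\tilde\varphi'\in(\fg^*)^Z_{>j_1}$ has strictly higher lowest weight. By induction the inductive hypothesis furnishes $\tilde v\in\Exp(\fg^Z_{>j_1})$ with $\Ad^*(\tilde v)\varphi=\varphi+\tilde\varphi'$ and $\tilde X\in\fg^Z_{>j_1}$ with $\ad^*(\tilde X)\varphi=\tilde\varphi'$. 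Setting $v:=\exp(Y)\tilde v\in\Exp(\fg^Z_{>0})$ yields $\Ad^*(v)\varphi=\varphi+\varphi'$, giving the second conclusion. For the first conclusion one bookkeeps in parallel: $\varphi'_{j_1}=\ad^*(Y)\varphi$ and $\tilde\varphi'$ both lie in $\ad^*(\fg^Z_{>0})\varphi$, and a careful accounting of the BCH-generated corrections (using that $\ad^*(Y)$ raises weight and $\varphi'_{>j_1}\in(\fg^*)^Z_{>j_1}$) places $\varphi'\in\ad^*(\fg^Z_{>0})\varphi$, whence an $X$ exists. The induction terminates because the $Z$-weights on $\fg$ form a finite set.

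The main technical obstacle I expect is the key step above: rigorously justifying that $\gamma$ is indeed a morphism into the locally closed variety $\mathcal{O}$ and that its leading-order coefficient $\varphi'_{j_1}$ is a Zariski tangent vector at $\varphi$, using only smoothness of the orbit and the polynomial nature of $\gamma$ (so that ``higher-order tangents'' really belong to $T_\varphi\mathcal{O}$). A secondary point of care is synchronizing the linear and group-theoretic conclusions through the induction and verifying that the BCH corrections do not introduce components outside $\ad^*(\fg^Z_{>0})\varphi$.
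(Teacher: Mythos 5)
Your treatment of the second conclusion (the element $v$) is correct, and it is essentially the paper's deformation--induction argument carried out with the tangency step in the form that is actually justified: you decompose $\varphi'$ by $Z$-weights, use the polynomial curve $\gamma(t)=\varphi+\sum_j t^{Nj}\varphi'_j$, which lies in the orbit ${\bf G}(\C)\varphi$, to certify that only the \emph{leading} coefficient $\varphi'_{j_1}$ is a Zariski tangent vector at $\varphi$, solve $\ad^*(Y)\varphi=\varphi'_{j_1}$ rationally with $Y\in\fg^Z_{j_1}$, conjugate by $\exp(-Y)$ and induct on the lowest weight. This is subtly different from the paper, which differentiates the curve at $t=0$ (where it passes through $\varphi+\varphi'$, not $\varphi$) and asserts that the whole vector $\sum_i\lam_i\varphi'_i$ is tangent at $\varphi$; your weaker claim is the one that is provable, and it is exactly what the induction needs, so this half of your proof is complete (the rationality descent and termination are fine).

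Your derivation of the first conclusion (the element $X$), however, has a genuine gap, and no ``careful accounting of BCH corrections'' can close it: the correction terms $\tfrac12\ad^*(Y)^2\varphi$, $\ad^*(Y)\tilde\varphi'$, etc.\ need not lie in $\ad^*(\fg^Z_{>0})\varphi$, because the $\Exp(\fg^Z_{>0})$-orbit of $\varphi$ is not contained in the affine subspace $\varphi+\ad^*(\fg^Z_{>0})\varphi$. Concretely, take $\fg=\sll_4$, $Z=\diag(1,0,0,-1)$, and let $\varphi$ correspond to $e_{32}\in\fg^Z_0$ under the trace form. Then $\ad^*(\fg^Z_{>0})\varphi$ corresponds to $\Span(e_{12},e_{34})$, while $\Ad^*(\exp(ae_{13}-be_{24}))\varphi$ corresponds to $e_{32}+a\,e_{12}+b\,e_{34}+ab\,e_{14}$; indeed the whole intersection ${\bf G}(\C)\varphi\cap\bigl(\varphi+(\fg^*)^Z_{>0}\bigr)$ is this quadric. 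Hence for $ab\neq0$ the element $\varphi'=a\,e_{12}+b\,e_{34}+ab\,e_{14}$ satisfies all the hypotheses of Lemma~\ref{lem:SameOrbit} (and the $v$-conclusion holds with $v=\exp(ae_{13}-be_{24})$), yet there is no $X\in\fg^Z_{>0}$ with $\ad^*(X)\varphi=\varphi'$: only the lowest-weight component of $\varphi'$ is guaranteed to lie in $\ad^*(\fg^Z_{>0})\varphi$. So the step you flagged as ``a secondary point of care'' is precisely where the argument breaks, and it cannot be repaired as stated; the same example shows that the paper's own construction of $X$, which rests on the tangency assertion criticized above, is also problematic, whereas the $v$-statement --- the only part invoked later, e.g.\ in Corollary~\ref{cor:order} --- is the one your proof correctly establishes.
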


\begin{proof}
    Decompose $\varphi'=\sum_{i=1}^k\varphi'_i$ where $\varphi'_i\in (\fg^*)^{Z}_{\lam_i}$ and $\lam_1<\lam_2<\dots <\lam_k\in \Q_{>0}$ are all the positive eigenvalues of $Z$.

Let us first construct $X$. For any $t\in \R$, we have the following identity in $\fg^{*}(\C)$:
    \begin{equation}
        \Ad^*(\exp(tZ))(\varphi+\varphi')=\varphi+\sum_{i=1}^k \Ad^*(\exp(t\lam_i))\varphi'_i\,.
    \end{equation}
    Thus, $\varphi + \sum_i \Ad^*(\exp(t\lam_i))\varphi'_i\in G(\C)\varphi$. 
    Differentiating by $t$ at $0$ we obtain that $\sum_i \lam_i\varphi'_i$
    lies in the tangent space to the orbit ${\bf G}(\C)\varphi$ at $\varphi$. This tangent space is the image of $\varphi$ under the coadjoint action. Thus there exists $Y_{\C}\in \fg(\C)$ with $\ad^*(Y)(\varphi)=\sum_i \lam_i\varphi'_i$.
Since both $\varphi$ and $\sum_i \lam_i\varphi'_i$ lie in the $\K$-points $\fg^*$, there exists $Y\in \fg$ with the same property. Decompose $Y=Y'+\sum_i Y_i$ with $Y_i\in \fg^Z_{\lam_i}$. Since $\varphi$ commutes with $Z$, we obtain $\ad^*(Y_i)(\varphi)=\lam_i\varphi'_i$. Now we take $X:=\sum_i \lam_i^{-1}Y_i \in \lie g^Z_{>0}$. 

We now prove the existence of $v$ by descending induction on the maximal index $i$ such that $\varphi'\in \fg^Z_{>\lam_i}$. The base case $i=k$ has $\varphi'=0$. For the induction step, let $i<k$ such that $\varphi'\in \fg^Z_{>\lam_i}$.
Then $\Ad^*(\exp(-X))(\varphi+\varphi')=\varphi+\psi$, where $\psi\in \fg^Z_{>\lam_{i+1}}$. By the induction hypothesis, $\varphi+\psi\in \Ad^*(\Exp(\fg^{Z}_{>0}))\varphi$.
\end{proof}

\begin{cor}\label{cor:complexDim}
    Let $\cO,\cO'$ be two nilpotent $\Gamma$-orbits with $(\cO, \cO')\in R$ (see Definition \ref{def:order}), and let $\cO_{\C},\cO'_{\C}$ denote their complexifications. Then $\dim \cO_{\C}<\dim \cO'_{\C}$. 
\end{cor}
\begin{proof}
    By Lemma~\ref{lem:orbit-closure} we have $\cO_{\C}\subset \overline{\cO'_{\C}}$. Thus either 
   $\dim \cO_{\C}<\dim \cO'_{\C}$ or $ \cO_{\C}= \cO'_{\C}$.
    If $ \cO_{\C}= \cO'_{\C}$ then, by the definition of $R$, there exist a rational semi-simple $Z\in \fg$, $\varphi\in \cO\cap \fg^Z_0$,  and $\psi \in \fg^Z_{>0}$ such that $\varphi+\psi\in \cO_{\C}$, but $\varphi+\psi\notin \cO$. This contradicts Lemma \ref{lem:SameOrbit}.
\end{proof}

\begin{cor}\label{cor:order}
    The relation $R$ of Definition \ref{def:order} is indeed an order relation.
\end{cor}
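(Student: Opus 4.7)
My plan is to reduce antisymmetry of the transitive closure of $R$ to the following strict-monotonicity statement: whenever $(\cO,\cO')\in R$, the dimension of the complex orbit $\cO_\C$ containing $\cO$ is strictly smaller than that of $\cO'_\C$. Once this is established, any chain $\cO=\mathcal{P}_0,\mathcal{P}_1,\dots,\mathcal{P}_k=\cO'$ with $(\mathcal{P}_{i-1},\mathcal{P}_i)\in R$ witnessing $\cO\le \cO'$ in the transitive closure will force $\dim \cO_\C<\dim \cO'_\C$ whenever $\cO\neq\cO'$, so the transitive closure cannot have two distinct elements comparable both ways.

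To prove the dimension inequality, I would first invoke Lemma \ref{lem:orbit-closure}: if $(\cO,\cO')\in R$, then $\cO$ lies in the $\K_\nu$-topology closure of $\cO'$ at any place $\nu$. Fixing an archimedean place (equivalently the complex embedding $\sigma$), and using the standard fact that the classical-topology closure of an orbit of an algebraic group coincides with its Zariski closure over $\C$, I obtain $\cO_\C \subseteq \overline{\cO'_\C}^{\mathrm{Zar}}$. The Zariski closure of a nilpotent complex orbit decomposes into the orbit itself together with a union of orbits of strictly smaller dimension, so either $\cO_\C=\cO'_\C$, or $\dim \cO_\C < \dim \cO'_\C$.

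The main obstacle, and the step where the definition of $R$ is actually used, is excluding the equality $\cO_\C=\cO'_\C$. Here I would apply Lemma \ref{lem:SameOrbit}: taking $\varphi\in \cO$, a rational semi-simple $Z$, and $\varphi'\in(\fg^*)^Z_{>0}$ witnessing $(\cO,\cO')\in R$, the assumption $\cO_\C=\cO'_\C$ says exactly that $\varphi$ and $\varphi+\varphi'$ are conjugate by $\G(\C)$, so the lemma produces $v\in \Exp(\fg^Z_{>0})\subseteq \Gamma$ with $\Ad^*(v)\varphi=\varphi+\varphi'$. This places $\varphi+\varphi'\in \Gamma\varphi=\cO$, yet by hypothesis $\varphi+\varphi'\in \cO'$, forcing $\cO=\cO'$ and contradicting the requirement $\cO\neq\cO'$ built into the definition of $R$. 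Hence the strict dimension increase holds, and combined with reflexivity (adjoined by convention) and the tautological transitivity of transitive closures, this completes the proof that the transitive closure of $R$ is a partial order.
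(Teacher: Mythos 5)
Your proof is correct and follows essentially the same route as the paper: both arguments combine Lemma \ref{lem:orbit-closure} (closure containment of complexifications) with Lemma \ref{lem:SameOrbit} to rule out the case $\cO_\C=\cO'_\C$, using that $\Exp(\fg^Z_{>0})\subseteq\Gamma$ would then force $\varphi+\varphi'\in\Gamma\varphi$, contradicting $\cO\neq\cO'$. The only difference is presentational: you package this as a strict increase of $\dim\cO_\C$ along $R$, which makes the exclusion of arbitrary cycles in the transitive closure explicit, whereas the paper's proof verifies directly that $R$ itself admits no two-element cycle and leaves the extension to longer chains implicit.
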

\begin{proof}
    We have to show that if $(\cO,\cO')\in R$ then $(\cO',\cO)\notin R$. Suppose the contrary. Then by Lemma~\ref{lem:orbit-closure}  the complexifications  $\cO'_{\C}$ and $\cO_{\C}$ coincide. Moreover, because of the above assumption there exist a rational semi-simple $Z\in \fg$, $\varphi\in \cO\cap \fg^Z_0$,  and $\psi \in \fg^Z_{>0}$ such that $\varphi+\psi\in \cO_{\C}$, but $\varphi+\psi\notin \cO$. This contradicts Lemma \ref{lem:SameOrbit}.
\end{proof}

\begin{lemma}\label{lem:SameOrbit2}
    Let $Z,S\in \fg$ be commuting rational semi-simple elements, let $q\in \Q$ and let    
    $\varphi\in \fg^Z_0\cap \fg^{S}_{q}$ and $\varphi'\in \fg^Z_{>0}\cap \fg^{S}_{q}$. Assume that $\varphi$ is conjugate to $\varphi+\varphi'$ by ${\bf G}(\C)$. Then there exist $X\in \fg^{Z}_{>0}\cap \fg^{S}_{0}$ such that $\ad^*(X)(\varphi)=\varphi'$ and $v\in \Exp(\fg^{Z}_{>0}\cap \fg^{S}_{0})$ such that $\Ad^*(v)(\varphi)=\varphi+\varphi'$. 
\end{lemma}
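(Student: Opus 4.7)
My plan is to run the argument of Lemma~\ref{lem:SameOrbit} while carefully tracking the $S$-grading, exploiting the fact that $Z$ and $S$ commute so that $\ad(S)$ and $\ad^*(S)$ preserve the $Z$-eigenspace decompositions and every construction can be refined to lie in $\fg^S_0$.

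First I would copy the opening of that earlier proof verbatim: decompose $\varphi' = \sum_{i=1}^k \varphi'_i$ into $Z$-eigencomponents with positive eigenvalues $\lambda_1 < \cdots < \lambda_k$, differentiate the curve $t \mapsto \Ad^*(\exp(tZ))(\varphi+\varphi')$ at $t=0$, and use the conjugacy $\varphi+\varphi' \in {\bf G}(\C)\varphi$ to conclude that $\sum_i \lambda_i \varphi'_i$ lies in the tangent space $\ad^*(\fg(\C))\varphi$. The descent from $\C$ to $\K$ goes through as before, since $\varphi$ and the $\varphi'_i$ are defined over $\K$ and $\ad^*(\cdot)\varphi$ is $\K$-linear, producing $Y\in\fg$ with $\ad^*(Y)\varphi = \sum_i \lambda_i \varphi'_i$. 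Commutativity of $Z$ and $S$ forces each $\varphi'_i$ to still sit in $(\fg^*)^S_q$.

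The new ingredient is to project $Y$ onto the $S$-zero subspace. Decomposing $Y = \sum_\mu Y^{(\mu)}$ by $S$-eigenvalues, the component $\ad^*(Y^{(\mu)})\varphi$ has $S$-eigenvalue $q\pm\mu$ (depending on sign convention), so because the right-hand side $\sum_i \lambda_i \varphi'_i$ sits entirely in $(\fg^*)^S_q$ only $Y^{(0)}$ can contribute; hence $\ad^*(Y^{(0)})\varphi = \sum_i \lambda_i \varphi'_i$. Further decomposing $Y^{(0)}$ by $Z$-eigenvalues and using $[Z,\varphi]=0$ as in the original proof, set $X := \sum_i \lambda_i^{-1} Y^{(0)}_{\lambda_i}$; this $X$ lies in $\fg^Z_{>0}\cap\fg^S_0$ and satisfies $\ad^*(X)\varphi = \varphi'$, proving the first statement.

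For the existence of $v$ I would run the descending induction on the largest index $i$ with $\varphi' \in (\fg^*)^Z_{>\lambda_i}$, mirroring Lemma~\ref{lem:SameOrbit}. The induction stays inside $\Exp(\fg^Z_{>0}\cap\fg^S_0)$ because the new $X$ lies in $\fg^S_0$, so $\exp(-X)$ commutes with $S$ and the residue $\psi := \Ad^*(\exp(-X))(\varphi+\varphi') - \varphi$ retains $S$-eigenvalue $q$ while gaining higher $Z$-eigenvalue; the inductive hypothesis then yields $v' \in \Exp(\fg^Z_{>0}\cap\fg^S_0)$ with $\Ad^*(v')\varphi = \varphi+\psi$, and $v := \exp(X)\,v'$ is the required element. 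I do not foresee any serious obstacle: the result is a bigraded refinement of Lemma~\ref{lem:SameOrbit}, and the only extra input needed is the simultaneous diagonalisability of $\ad(Z)$ and $\ad(S)$ guaranteed by $[Z,S]=0$.
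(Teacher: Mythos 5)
Your proposal is correct and follows essentially the same route as the paper: the paper's proof also says to construct $X$ exactly as in Lemma~\ref{lem:SameOrbit} and then project onto the $0$-eigenspace of $S$ (you project $Y$ before forming $X$ rather than projecting $X$ afterwards, which is the same thing since the $S$- and $Z$-gradings commute), and to build $v$ by the same descending induction, which now stays inside $\Exp(\fg^Z_{>0}\cap\fg^S_0)$ precisely because $X\in\fg^S_0$. Your write-up just makes explicit the details the paper leaves implicit.
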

\begin{proof}
To construct $X$ we proceed in the same way as in the proof of Lemma \ref{lem:SameOrbit}, and then decompose it with respect to eigenspaces of $S$ and take projection on the $0$ eigenspace. Then we construct $v$  in the same way as in the proof of Lemma \ref{lem:SameOrbit}.
\end{proof}
Lemma \ref{lem:SameHOrbit} follows now from Lemma \ref{lem:SameOrbit2} (with $q=2$) and Corollary  \ref{cor:complexDim}.

We would now like to relate the notion of dominance to dimensions. For any Whittaker pair $(S,\varphi)$
 define 
\begin{equation}
    d(S,\varphi):= (\dim \fn_{S,\varphi}+ \dim (\fg^S_{\geq 1}))/2
\end{equation}

\begin{example}
If $(S,\varphi)$ is a neutral pair then $d(S,\varphi)=(\dim \Gamma \varphi)/2$. If $(S,\varphi)$ is a Levi-distinguished pair corresponding to a nilpotent orbit $\cO$ in a Levi subalgebra $\fl\subset \fg$ then $d(S,\varphi)=(\dim \fg - \dim \fl +\dim \cO )/2$.
Furthermore, if $\fg^{S}_1 = \{0\}$, as in for example a parabolic Fourier coefficient, then $d(S,\varphi) = \dim \fn_{S,\varphi}$.
\end{example}

\begin{lem}
  \label{lem:dim-max-iso}
The number $d(S,\varphi)$ equals the dimension of any maximal isotropic subspace of $\fu_S:=\fg^S_{\geq 1}$. 
\end{lem}
\begin{proof}
Any such subspace includes $\fn_{S,\varphi}$, and the quotient is Lagrangian in the symplectic space $ \fu_S/\fn_{S,\varphi}$.
\end{proof}

We study Fourier coefficients with respect to maximal isotropic subspaces, so-called Fourier--Jacobi coefficients, in \cite{Eulerianity}.

\begin{lemma}\label{lem:dim}
Let $(H,\varphi)$ and $(S,\varphi)$ be Whittaker pairs with the same $\varphi$.  
\begin{enumerate}
    \item \label{it:DimDom} If $(H,\varphi)$ dominates $(S,\varphi)$ then  $d(H,\varphi)\leq d(S,\varphi)$.
    \item \label{it:DimLeviDist} If $(S,\varphi)$ is Levi-distinguished then $\dim \fn_{H,\varphi}\leq \dim \fn_{S,\varphi}$.
\end{enumerate}
\end{lemma}

\begin{proof}
For part \ref{it:DimDom} let $Z:=S-H$ and choose a Lagrangian subspace $a\subset (\fg^H_1\cap \fg^Z_{0})/( \fg^H_1\cap \fg^Z_{0}\cap \fg_{\varphi})$. 
Let $a'$ denote the preimage of $a$ in $\fg^H_1\cap \fg^Z_{0}$.
For any rational $t\in [0,1]$ denote $H_t:=H+tZ,$ and define  $\fl_t$ and $\fr_t$ as in~\eqref{=lt}. 
Define also $\fl_t^\text{max}:=\fl_t+a'$ and $\fr_t^\text{max}:=\fr_t+a'$. 
Then both $\fl^\text{max}_t$ and $\fr^\text{max}_t$ are maximal isotropic subspaces in $\fu_{H_t} := \fg^{H_t}_{\geq1}$ with respect to the anti-symmetric form $\omega_\varphi : \fg \times \fg \to \mathbb{K}$ defined by $\omega_\varphi(X, Y) = \varphi([X,Y])$. 
Indeed, the symplectic space $ \fu_{H_t}/\fn_{H_t,\varphi}$ is naturally isomorphic to $\fw_t/(\fw_t\cap \fg_\varphi)$, where $\fw_t:=\fg^{H_t}_1$.  Note that $(\fw_t)^Z_0=\fg^H_1\cap \fg^Z_{0}$ for all $t$.
Now, $\fw_t=(\fw_t)^{Z}_0\oplus (\fw_t)^{Z}_{<0}\oplus (\fw_t)^Z_{>0}$, with $(\fw_t)^Z_{<0}$ and $(\fw_t)^Z_{>0}$ both isotropic and orthogonal to $(\fw_t)^Z_{0}$ with respect to $\omega_\varphi$. 
Let $\fw_t^+$ and $\fw_t^-$ denote the images of $(\fw_t)^Z_{<0}$ and $(\fw_t)^Z_{>0}$ in $\fw_t/(\fw_t\cap \fg_\varphi)$.

Since $\fl^\text{max}_t$ projects onto $\fw_t^{-} \oplus a$ and $\fr^\text{max}_t$ projects onto $\fw_t^{+} \oplus a$, both project to Lagrangian subspaces of $\fu_{H_t}/\fn_{H_t,\varphi}\cong\fw_t/(\fw_t\cap \fg_\varphi),$
and thus are maximal isotropic. 
Hence, by Lemma \ref{lem:dim-max-iso}, we have
\begin{equation}
(H_t,\varphi)=\dim \fl_t^\text{max}=\dim \fr_t^\text{max}.
\end{equation}
Now let $0=t_0,\dots,t_n=1$ be all the critical numbers in the interval $[0,1]$. 
Then by \eqref{=lrw} for every $i$  we have $\fr_{t_i}\subseteq \fl_{t_{i+1}},$ thus $\fr^\text{max}_{t_i}\subseteq \fl^\text{max}_{t_{i+1}},$ and thus $d(H_{t_i},\varphi)\leq d(H_{t_{i+1}},\varphi)$. 
Since $H_{t_0}=H$ and $H_{t_n}=S$ part \ref{it:DimDom} follows.

For part \ref{it:DimLeviDist}, let $Z'$ be as in Notation \ref{not:Zprime}, and let $S':=H+TZ'$ with $T$ large enough as in Lemma \ref{lem:LeviDist}.
Then $\fg^{S'}_1 \subseteq \fg^H_1$ and thus 
\begin{equation}
d(H,\varphi)-\dim \fn_{H,\varphi}\leq d(S',\varphi)-\dim \fn_{S',\varphi}.
\end{equation}
Furthermore, by Lemma \ref{lem:Z'main}, $(H,\varphi)$ dominates $(Z',\varphi)$. Part \ref{it:DimDom} implies now that $\dim \fn_{(H,\varphi)}\leq \dim \fn_{(S',\varphi)}$. 
Finally, by Lemma \ref{lem:LeviDist} the pair $(S',\varphi)$ is Levi-distinguished, and
thus, by Lemma \ref{lem:SameDim}, we have 
$\dim \fn_{S,\varphi}= \dim \fn_{S',\varphi}$.
\end{proof}

\begin{remark}
In a previous arXiv preprint version of this paper we claimed that if $(H,\varphi)$ dominates $(S,\varphi)$ then 
$\dim \fn_{H,\varphi}\leq \dim \fn_{S,\varphi}$, and that therefore~$\dim \fn_{H,\varphi}$ is minimal for a neutral pair~$(H, \varphi)$. Unfortunately, these statements are wrong. Indeed, it is possible that two pairs $(H,\varphi)$ and $(S,\varphi)$ dominate each other, with $(H,\varphi)$ neutral, and $\dim \fn_{H,\varphi} > \dim \fn_{S,\varphi}$. 
\end{remark}

\begingroup\raggedright\endgroup


\end{document}